\newtheorem{theorem}{Theorem}
\newtheorem{lemma}{Lemma}
\newtheorem{proposition}{Proposition}
\newtheorem{assumption}{Assumption}
\theoremstyle{definition}
\newtheorem{definition}{Definition}
\newtheorem{remark}{Remark}
\newtheorem{example}{Example}
\newcommand{\R}{\mathbb{R}}
\newcommand{\N}{\mathbb{N}}
\newcommand{\mF}{\mathcal{F}}
\newcommand{\mA}{\mathcal{A}}
\newcommand{\mI}{\mathcal{I}}
\newcommand{\mH}{\mathcal{H}}
\newcommand{\mT}{\mathcal{T}}
\newcommand{\mN}{\mathcal{N}}
\newcommand{\mU}{\mathcal{U}}
\newcommand{\mW}{\mathcal{W}}
\newcommand{\mX}{\mathcal{X}}
\newcommand{\mR}{\mathcal{R}}
\newcommand{\Ep}{E}
\renewcommand{\Pr}{\mathrm{Pr}}
\newcommand{\mE}{\mathcal{E}}
\renewcommand{\hat}{\widehat}
\renewcommand{\tilde}{\widetilde}
\newcommand{\argmin}{\operatornamewithlimits{argmin}}
\newcommand{\mone}{\textbf{1}}
\newcommand{\g}{\hat{f}_n}
\newcommand{\sgn}{\mathrm{sign}}
\DeclareMathOperator{\Cov}{Cov}
\title{Fast Convergence on Perfect Classification for Functional Data}
\author{Tomoya Wakayama$^\dagger$ \and Masaaki Imaizumi$^{\dagger \ddagger *}$}
\address{$^\dagger$The University of Tokyo, $^\ddagger$RIKEN Center for AIP}
\begin{document}

\maketitle

\begin{abstract}
We investigate the availability of approaching perfect classification on functional data with finite samples. The seminal work (Delaigle and Hall (2012)) showed that perfect classification for functional data is easier to achieve than for finite-dimensional data. This result is based on their finding that a sufficient condition for the existence of a perfect classifier, named a Delaigle--Hall condition, is only available for functional data. However, there is a danger that a large sample size is required to achieve the perfect classification even though the Delaigle--Hall condition holds, because a minimax convergence rate of errors with functional data has a logarithm order in sample size. This study solves this complication by proving that the Delaigle--Hall condition also achieves fast convergence of the misclassification error in sample size, under the bounded entropy condition on functional data. We study a reproducing kernel Hilbert space-based classifier under the Delaigle--Hall condition, and show that a convergence rate of its misclassification error has an exponential order in sample size. Technically, our proof is based on (i) connecting the Delaigle--Hall condition and a margin of classifiers, and (ii) handling metric entropy of functional data. Our experiments support our result, and also illustrate that some other classifiers for functional data have a similar property.
\end{abstract}

\footnote{

Corresponding author. E-mail: imaizumi@g.ecc.u-tokyo.ac.jp.

$^\dagger$ The University of Tokyo: 7-3-1 Hongo, Bunkyo, Tokyo, 113-8654 JAPAN.

$\ddagger$ RIKEN Center for AIP: 1-4-1 Nihonbashi, Chuo, Tokyo, 103-0027 JAPAN.
}

\section{Introduction} 
The classification problem is one of the most general and significant problems in functional data analysis. 
A goal of this problem is, from functional data given in a form of (possibly) infinite-dimensional random curves, to predict labels or categories corresponding to the functional data. 
Because of its versatility, it has many applications such as science \citep{varughese2015non}, engineering \citep{gannaz2014classification,li2013hyperspectral,florindo2011enhancing}, medicine \citep{chang2014functional,islam2020classification,dai2017optimal}, and others. 
A variety of methods have been developed to solve this problem.
For example, distance-based methods \citep{alonso2012supervised, ferraty2003curves}, $k$-nearest neighbour methods \citep{biau2005functional,cerou2006nearest}, partially least square \citep{preda2007pls,preda2005clusterwise}, orthonormal basis methods \citep{delaigle2013classification,delaigle2012achieving}, Bayesian approaches \citep{wang2014efficient,yang2014bayesian}, and logistic regression methods \citep{araki2009functional}.
For a survey, for example, see \cite{cuevas2014partial}.

\textit{Perfect classification} for functional data was studied by \cite{delaigle2012achieving}, and it shows an advantage of using infinite-dimensional data.
This notion refers to the convergence of the misclassification error to zero under an optimal classifier, which is also referred to the realizability \citep{shalev2014understanding}.
Seminal works \citep{delaigle2012achieving, delaigle2013classification} showed that under certain conditions of mean and covariance functions of functional data (hereafter, we refer to the condition as a \textit{Delaigle--Hall condition}), there exists a classifier that achieves  perfect classification asymptotically.
This result does not usually hold with finite-dimensional data; hence, it explains the significance of dealing with infinite-dimensional vectors called functional data.
\cite{berrendero2018use} clarified a relation between the notion and a reproducing kernel.
Various methods have been shown to have a connection to a perfect classifier \citep{cerou2006nearest,dai2017optimal,cuesta2016perfect,hanneke2021universal}.

One difficulty is the possibility of a large sample size to achieve the perfect classification, suggested by a convergence analysis of misclassification error in sample size.
Nonparametric methods for functional classification are known to have a very slow convergence rate due to the infinite dimensionality of functional data.
Let $R(f)$ be a misclassification error under a classifier $f$. 
\cite{meister2016optimal} proved that any classifier $\tilde{f}_n$ consisting of $n$ observations has the following relationship with some data generating process:
\begin{align*}
    R(\tilde{f}_n) - \inf_{f}R(f) \geq c (\log n)^{-\alpha}.
\end{align*}
Here, $c > 0$ is an universal constant and $\alpha > 0$ is a parameter that depends on the data generating process.
This result shows the misclassification error of functional data cannot avoid errors that only decay on the logarithmic order in the general setting.
Since logarithmic decay is slower than every decay with a polynomial order, the convergence of this unavoidable error is very slow.
This implies that even if a perfect classifier exists, it can be difficult to benefit from it.

This study resolves the possibility mentioned above by showing that the Delaigle--Hall condition by \cite{delaigle2012achieving} also makes the convergence of the excess misclassification error sufficiently fast.
To the goal, we consider a reproducing kernel Hilbert space (RKHS) $\mH$ and study a classifier $\hat{f}_n \in \mH$ from $n$ observations by empirical loss minimization. 
Also, we consider a family of functional data which satisfies a bounded entropy condition, which implies the continuity and the boundedness of a norm of the functional data.

Then, we show that $\hat{f}_n$ obtains the following convergence under the Delaigle--Hall condition:
\begin{align*}
    \Ep \left(R(\Hat{f}_n) - \inf_{f \in \mH} R(f) \right) \leq 2\exp(-\beta n),
\end{align*}
with some parameter $\beta > 0$.
This exponential convergence in $n$ is faster than all polynomial convergence, which ensures that we can easily enjoy the perfect classification.

In the details of the classifier $\hat{f}_n$,
it is constructed as a linear sum of given kernel functions. 
Functional data analysis using RKHS is widely used in both linear and nonlinear regression problems \citep{preda2007regression,lian2007nonlinear,cai2012minimax,cui2020partially,tian2020additive}, while it has not been widely used for the classification problem with the exception of \cite{rincon2012wavelet}.
We remark that our approach is different from the study by \cite{berrendero2018use} that considers functional data as RKHSs since we construct the classifier using RKHSs.

As a technical contribution, our theoretical results are obtained by the following two ideas.
First, we introduce a \textit{hard-margin condition}, which describes the ease of classification problems, and connect it with the Delaigle--Hall condition.
In a general setting, a hard-margin condition is suitable for a perfectly classifiable setting such as \cite{koltchinskii2005exponential}.
We newly develop a hard-margin condition for functional data, then prove that the Delaigle--Hall condition implies the hard-margin condition by covariance structures of functional data. 
Second, we develop a metric entropy analysis on a classifier for functional data.
To analyze the speed of convergence of the empirical risk minimization classifier, we need to study an excess empirical risk on a space of classifiers.
However, since classifiers for functional data are more complicated than those of ordinary cases, we cannot use the traditional theoretical results.
We derived a new entropy bound for this purpose, which enabled us to develop the theory. To the best of our knowledge, both technical points are new theoretical results.

We note the bounded entropy condition on functional data for our result. 
First, the condition requires a kind of continuity of the functional data, e.g., the Lipschitz continuity.
Second, it requires that a norm of the functional data is bounded almost surely, which excludes, for example, Gaussian processes. 
These restrictions are necessary for our proof technique with an entropy condition.
To clarify this point, we provide several examples of stochastic processes that satisfy the entropy condition.

We conduct numerical experiments to confirm our theoretical findings.
It shows that the convergence speed of misclassification error by the RKHS method varies depending on whether the Delaigle--Hall and hard-margin conditions are satisfied or not. 
Further, we test additional several classification methods for functional data under the conditions.
Its result shows that not only the RKHS method but also non-parametric classification methods such as the Gaussian process method give similar effects on their convergence rates, but linear methods such as linear discriminant analysis do not cause such effect.

The remainder of the paper is organized as follows. 
Section \ref{sec:pre} introduces the setting and method that we focus on. Section \ref{sec:main} explains the perfect classification and its convergence result. In Section \ref{sec:experiment}, we confirm our theoretical result through experiments.
Section \ref{sec:conclusion} presents our conclusion and some discussion.
The supplementary material contains proofs and additional examples.

\subsection{Notation}
For $r \in \R$, $\sgn(r)$ is a sign function which is $1$ if $r > 0$, $-1$ if $r < 0$, and $0$ if $r=0$.
$\lceil r \rceil$ denotes the largest integer which is no more than $r$. 
For $r,r' \in \R$, $r\lor r' = \max\{r,r'\}$. 
For a function $f: \Omega \to \R$ on a set $\Omega$, $\|f\|_{L^\infty} = \sup_{x \in \Omega} |f(x)|$ denotes a sup-norm, and $\|f\|_n^2 = {n^{-1}\sum_{i=1}^n f(X_i)^2}$ is an empirical norm with observations $X_1,...,X_n$.
For an event $\mE$, $\mone\{\mE\}$ is an indicator function which is $1$ is $\mE$ holds, and $0$ otherwise.
For two sequences $\{a_n\}_{n \in \N}$ and $\{b_n\}_{n \in \N}$, $a_n \gtrsim b_n$ denotes that there exists a constant $c > 0$ such that $a_n \geq c b_n$ for all $n \geq \overline{n}$ with some finite $\overline{n} \in \N$. $a_n \lesssim b_n$ denotes its opposite. $a_n \asymp b_n$ means that both $a_n \gtrsim b_n$ and $a_n \lesssim b_n$ hold.
With a variable $z$, let $C_z$ be some positive and finite constant which only depends on $z$.
For a space $\Omega$ with a distance $d$ and $\delta > 0$, let $\mN(\delta,\Omega,d)$ be the covering number of $\mathcal{H}$, that is, the minimal number of balls that cover $\Omega$ with the radius $\delta$ in terms of $d$.

\section{Preliminary}\label{sec:pre}

\subsection{Problem Setting}
We consider a functional classification problem.
Let $\mX$ be a subset of an $L^2$-space on an index set $\mT \subset \R^d$ with some $d \geq 1$, and consider its inner product $\langle x, x' \rangle = \int_\mT x(t) x'(t) dt$ with $ x,x' \in \mX$ and its induced $L^2$-norm $\|\cdot\|$.
Let $\mathcal{B}(\mX)$ be an associated Borel $\sigma$-field of $\mX$.
Suppose we have observations $(X_1,Y_1),...,(X_n,Y_n)$ that are $n$ independent copies of random object $(X,Y)$ from a joint distribution $P$, where $X$ is a $\mX$-valued random function and $Y$ is $\{-1,1\}$-valued discrete random label. 
We write $w = P (Y=-1) \in (0,1)$.
Let $\Pi$ on $\mathcal{B}(\mX)$ be a marginal measure of $X$.
For each label, we define conditional measures on $\mathcal{B}(\mX)$ for functional data as $P_+ = \Pi( \cdot \mid Y=1)$ and $P_- = \Pi( \cdot \mid Y=-1)$.
By the definitions, we obtain $P_+(\mathcal{X})=P_-(\mathcal{X})=1$ and $\Pi=w P_-+(1-w)P_+$.

The goal of this problem is to construct a classifier that outputs a label from a functional input in $\mX$.
For a given function $f:\mathcal{X}\rightarrow \R$, a corresponding binary classifier is defined as $ \sgn\circ f$.
We also define a misclassification error by $f$ as
\begin{align*}
    R(f)=P\{(X,Y):Y\neq \mathrm{sign}(f(X))\},
\end{align*}
which is also referred as a generalization error of the classification problem.

We discuss the existence of a minimizer of $R(f)$, which is referred to as an optimal function for the Bayes classifier.
To this end, we have to develop a density function of $P_+$ and $P_-$.
Unlike the finite-dimensional data case, it is not trivial to define the densities, since function spaces do not have the useful Lebesgue measure.
Instead, we utilize $\Pi$ as a base measure, which is absolute continuous to $P_+$ and $P_-$, and define the following densities by the Radon--Nikodym derivative $p_+ = dP_+ / d \Pi$ and $p_- = dP_- / d \Pi$.
The following result shows that we can guarantee the function as a minimizer with $p_+$ and $p_-$.
Its proof is deferred to the supplementary material.
\begin{lemma} \label{lem:classifier}
We define a function $f_0: \mX \to \R$ as  
    $f_0(x)=(1-w) p_+(x) - w p_-(x)$.
Then, $f_0$ minimizes $R(f)$.
\end{lemma}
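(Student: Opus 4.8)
The plan is to run the classical pointwise (Bayes) optimization argument, adapted to the measure-theoretic setup in which $\Pi$ serves as the dominating base measure in place of a (nonexistent) Lebesgue measure.

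First I would rewrite the misclassification error by splitting on the two values of $Y$ and then disintegrating over $X$. Since $Y \in \{-1,1\}$, we have $R(f) = P\{Y=1,\, \sgn(f(X)) \neq 1\} + P\{Y=-1,\, \sgn(f(X)) \neq -1\}$. Conditioning on the label turns the first term into $(1-w)\,P_+(\{x : \sgn(f(x)) \neq 1\})$ and the second into $w\,P_-(\{x : \sgn(f(x)) \neq -1\})$. Using $dP_+ = p_+\, d\Pi$ and $dP_- = p_-\, d\Pi$ then yields
\begin{align*}
R(f) = \int_{\mX} \Big[ (1-w)\,p_+(x)\,\mone\{\sgn(f(x)) \neq 1\} + w\,p_-(x)\,\mone\{\sgn(f(x)) \neq -1\} \Big]\, d\Pi(x),
\end{align*}
so the problem reduces to minimizing the bracketed integrand separately at $\Pi$-almost every $x$.

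Second, for fixed $x$ I would examine the three admissible values of $s := \sgn(f(x))$: the bracket equals $w\,p_-(x)$ when $s=1$, equals $(1-w)\,p_+(x)$ when $s=-1$, and equals $(1-w)p_+(x) + w p_-(x)$ when $s=0$. Hence the pointwise infimum over all functions $f$ is $\min\{(1-w)p_+(x),\, w p_-(x)\}$, and it is attained by $s=1$ if $(1-w)p_+(x) \ge w p_-(x)$ and by $s=-1$ if $(1-w)p_+(x) \le w p_-(x)$, i.e.\ by $s = \sgn(f_0(x))$ whenever $f_0(x) \neq 0$. Substituting $f = f_0$ into the displayed identity gives $R(f_0) = \int_{\mX} \min\{(1-w)p_+(x),\, w p_-(x)\}\, d\Pi(x)$, which is a lower bound for $R(f)$ for every $f$; this proves the lemma.

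The steps requiring care are purely measure-theoretic bookkeeping rather than conceptual obstacles: (i) justifying the disintegration, namely that $\Pi = w P_- + (1-w)P_+$ with $w \in (0,1)$ forces $P_\pm \ll \Pi$, so that $p_\pm = dP_\pm/d\Pi$ are well defined and $(1-w)p_+ + w p_- = 1$ holds $\Pi$-a.e.\ (this is precisely why $\Pi$, rather than Lebesgue measure, must be taken as the base); and (ii) the tie set $\{x : f_0(x) = 0\}$, where the stated sign convention returns $s=0$ and hence the non-minimizing value $(1-w)p_+(x) + w p_-(x) = 2 w p_-(x)$ --- this is handled either by adopting the convention that ties are resolved toward $\pm 1$, or by reading the statement up to $\Pi$-null modification of $f_0$. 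I therefore do not anticipate any substantive difficulty.
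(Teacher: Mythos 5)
Your proposal is correct and follows essentially the same route as the paper: identify $\Pr(Y=\pm 1\mid x)$ with $(1-w)p_+(x)$ and $wp_-(x)$ via the Radon--Nikodym derivatives with respect to $\Pi$, and conclude that $\sgn(f_0)$ coincides with the Bayes rule. The only difference is that you spell out the pointwise minimization that the paper leaves implicit and you flag the tie set $\{f_0=0\}$, where the paper's sign convention is indeed slightly loose; your resolution (modification on a $\Pi$-null set or a tie-breaking convention) is the right fix.
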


\subsection{Methodology: RKHS Classifier}

We provide a setting about the notion of a reproducing kernel Hilbert space (RKHS) for functional data.
Let $\mathcal{H}$ be a Hilbert space on $\mX$.
Also, let $\langle \cdot, \cdot \rangle_\mH$ be an inner product of $\mH$, and $\|\cdot\|_\mH$ be an induced norm of $\mH$.
A function $K: \mX \times \mX \to \R$ is referred to be a reproducing kernel for $\mH$, if it satisfies (i) for every $x \in \mX$, $K(\cdot, x) \in \mH$ holds, and (ii) for every $x \in \mX$ and $f \in \mH$, $f(x) = \langle f, K(\cdot, x) \rangle_\mH$ holds.
It is well-known that a reproducing kernel $K$ is symmetric, nonnegative definite, uniquely determined by an RKHS $\mH$.
Also, a set of linear form $\{\sum_{i=1}^n c_i K(x_i,\cdot): c_i \in \R, x_i \in \mX\}$ is dense in $\mH$.
For details, see \cite{berlinet2011reproducing}.

As a property of RKHS that is important to our study, for any $x,x' \in \mX$ and $f \in \mH$, there exists a constant $c_\mH$ such that
\begin{align}
    &|f(x)| \leq c_\mH\|f\|_\mH, \mbox{~and~}|f(x)-f(x')|\leq \|f\|_\mH \|x - x'\| \label{ineq:prop_rkhs}
\end{align}
holds.
For its proof, see Proposition 4.30 in \cite{steinwart2008support}.
Hereafter, we set $c_\mH = 1$ without loss of generality.
Further, we impose that $\mH$ is dense in a set of continuous functions $C(\mX)$.
This property is referred to \textit{universality} and many common RKHSs are known to satisfy it (see Definition 4.52 and Corollary 4.55 in \cite{steinwart2008support}).

In the binary classification problems, we define a classifier $\hat{f}_n \in \mH$.
Let $\ell: \R \to \R$ be a loss function such that $\ell\geq \mone_{(-\infty,0]}$ is decreasing, bounded by $1$, convex, and $1$-Lipschitz continuous. 
We consider the following optimization problem:
\begin{align}
    \hat{f}_n=\argmin_{f\in\mathcal{H}} \frac{1}{n}\sum_{i=1}^n \ell(Y_if(X_i))+\lambda \|f\|_\mH^2\,, \label{def:erm}
\end{align}
where $\lambda > 0$ is a regularization coefficient.
In practice, this minimization problem is solved in various ways, depending on the loss function. 
We further assume that $\ell$ is twice differentiable, its first derivative $\ell'$ is negative, increasing and bounded below by $-1$, and its second derivative $\ell''$ is bounded above by $1$.
A logit loss $\ell(t) = \log (1 + \exp(-t))$ is a common choice as an example of a loss function that satisfies the requirement.

\section{Main Result on Convergence of Misclassification Error}\label{sec:main}

Our aim is to study an excess risk $R(\hat{f}_n)- \inf_{f \in \mH}R(f)$ under a perfect classifiable setting.
To this goal, a certain assumption for perfect classification plays a key role.

\subsection{Delaigle--Hall Condition for Perfect Classification}

The Delaigle--Hall condition, established by \cite{delaigle2012achieving,delaigle2013classification}, is a condition for functional data to be asymptotically perfect classifiable.

We provide some notations.
Consider covariance functions of $X \sim \Pi$ as $C(t,t') = \mathrm{cov} (X(t),X(t'))$, which is assumed to be strictly positive definite and uniformly bounded.
Further, a random function $X_+$ and $X_-$ drawn from $P_+$ and $P_-$ with corresponding bounded covariance function respectively $C_{\ell}(t,t') = \mathrm{cov}(X_\ell(t),X_\ell(t'))$ for $\ell \in \{-,+\}$. 
Their spectral decompositions (e.g. Theorem 4.6.5 in \cite{hsing2015theoretical}) are written as
\begin{align*}
    C(t,t') = \sum_{j=1}^\infty \theta_{j} \phi_j(t)\phi_j(t'), \mbox{~and~}C_{\ell}(u,v) &=\sum_{j=1}^{\infty}\theta_{\ell j}\phi_{\ell j}(u)\phi_{\ell j}(v),
\end{align*}
where $\{\theta_{\ell j},\phi_{\ell j}\}_{j=1}^\infty$, and $\{\theta_j,\phi_j\}_{j=1}^\infty$ are pairs of non-zero eigenvalues and eigenfunctions of $C_{\ell}$ and $C$ for $\ell \in \{-,+\}$, respectively.
We assume that they are sorted as $\theta_1 \geq \theta_2 \geq \ldots$ and $\theta_{\ell,1} \geq \theta_{\ell,2} \geq \ldots$.
We also introduce coefficients of mean functions for each label.
Let $X_+$ and $X_-$ be random functions generated from $P_+$ and $P_-$ respectively, then we define its mean as
\begin{align*}
\mu_+ := \Ep_{P_+}[X_+] = \sum_{j=1}^\infty \mu_{+,j} \phi_j, \mbox{~~and~~}\mu_- := \Ep_{P_-}[X_-] = \sum_{j=1}^\infty \mu_{-,j} \phi_j,
\end{align*}
with coefficients $\mu_{+,j}$ and $\mu_{-,j}$ for $j \in \N$ by the generalized Fourier decomposition.
Using the basis $\{\phi_j\}_{j=1}^{\infty}$, we express there difference $\mu_+-\mu_-=\sum_{j=1}^{\infty}\mu_j\phi_j$ by coefficients $\mu_j$ for $ j \in \N$.

We introduce a condition for perfect classification with the notation.
The following condition is developed in section 4.2 in \cite{delaigle2012achieving}:
\begin{definition}[Delaigle--Hall condition \citep{delaigle2012achieving}]
    The joint measure $P$ satisfies the Delaigle--Hall conditions, if the following holds for $\ell \in \{+,-\}$:
\begin{align}
    \lim_{M\rightarrow\infty}\frac{(\sum_{j=1}^M\theta_j^{-1}\mu_j^2)^2}{\sum_{j=1}^{\infty}\theta_{\ell j} (\sum_{i=1}^M \theta_i^{-1}\mu_i \int \phi_i(u)\phi_{\ell j}(u) du)^2}=\infty. \label{cond:DH}
\end{align}
\end{definition}
Further, we can simplify the condition \eqref{cond:DH} under a specific setting: if $C_+$ and $C_-$ has common eigenfunctions, that is, $\phi_j = \phi_{+j} = \phi_{-j}$, the condition \eqref{cond:DH} is rewritten to a more intuitive formulation
\begin{align*}
        \sum_{j=1}^\infty \theta_{j}^{-1} \mu_{j}^2 = \infty. 
\end{align*}
This condition indicates that the covariance of functional data is too concise compared to the mean difference.
If each functional data are nearly independent for each input, the Delaigle--Hall condition is more likely to be satisfied because $\theta_j$ decays faster as $j$ increases.
The Delaigle--Hall conditions implies the following result:
\begin{proposition}[Theorem 1 in \cite{delaigle2012achieving}] \label{prop:DH}
    If the Delaigle--Hall condition is satisfied and $X\mid Y$ is Gaussian, then there is a perfect classification, that is, $\inf_f R(f) = 0$ holds.
\end{proposition}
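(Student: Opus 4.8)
\emph{Proof strategy.} The plan is to reproduce the Delaigle--Hall argument: exhibit an explicit sequence of affine (linear-discriminant-type) classifiers indexed by a truncation level $M$, and show their misclassification errors converge to $0$ as $M\to\infty$. Since $\inf_f R(f)$ is no larger than the error of any single classifier, this forces $\inf_f R(f)=0$.

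First I would fix $M\in\N$ and introduce the truncated direction $\psi_M=\sum_{j=1}^M\theta_j^{-1}\mu_j\phi_j\in\mX$ together with the midpoint $m=(\mu_++\mu_-)/2$, and take the classifier $x\mapsto\sgn(f_M(x))$ with $f_M(x)=\langle x-m,\psi_M\rangle$; this $f_M$ is a continuous affine functional on $\mX$, hence admissible in $R(\cdot)$. Because $X\mid Y=\ell$ is Gaussian, the real-valued variable $\langle X,\psi_M\rangle$ is, conditionally on $Y=\ell$, Gaussian with mean $\langle\mu_\ell,\psi_M\rangle$ and variance $V_{\ell,M}:=\langle C_\ell\psi_M,\psi_M\rangle$. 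Inserting the spectral decomposition of $C_\ell$ and the Fourier expansions of $\mu_\pm$ and using orthonormality of $\{\phi_j\}$ gives $\langle\mu_+-\mu_-,\psi_M\rangle=\sum_{j=1}^M\theta_j^{-1}\mu_j^2=:D_M$ and
\begin{align*}
V_{\ell,M}=\sum_{k=1}^\infty\theta_{\ell k}\Bigl(\sum_{i=1}^M\theta_i^{-1}\mu_i\int\phi_i(u)\phi_{\ell k}(u)\,du\Bigr)^2,
\end{align*}
so $D_M^2$ and $V_{\ell,M}$ are exactly the numerator and denominator in \eqref{cond:DH}. Since $\langle m-\mu_\ell,\psi_M\rangle=\mp D_M/2$, a short computation with the standard normal c.d.f.\ $\Phi$ yields, up to the $\Pi$-null event $\{f_M(X)=0\}$,
\begin{align*}
R(f_M)=w\,\Phi\!\Bigl(-\frac{D_M}{2\sqrt{V_{-,M}}}\Bigr)+(1-w)\,\Phi\!\Bigl(-\frac{D_M}{2\sqrt{V_{+,M}}}\Bigr).
\end{align*}

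Then I would invoke the Delaigle--Hall condition \eqref{cond:DH}, which states precisely that $D_M^2/V_{\ell,M}\to\infty$ as $M\to\infty$ for both $\ell\in\{+,-\}$. Since $D_M$ is nondecreasing in $M$ and eventually strictly positive (implicit in \eqref{cond:DH} being well posed), this forces $D_M/\sqrt{V_{\ell,M}}\to\infty$, so each argument of $\Phi$ above diverges to $-\infty$ and each term tends to $\Phi(-\infty)=0$. Hence $R(f_M)\to0$, and therefore $\inf_f R(f)\le\lim_{M\to\infty}R(f_M)=0$; combined with $R\ge0$ this gives $\inf_f R(f)=0$.

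The computations---Gaussian mean/variance bookkeeping and the identification of $D_M,V_{\ell,M}$ with the two sides of \eqref{cond:DH}---are routine. The only points needing a little care are the well-posedness of the construction: that $\psi_M\in\mX$ and $V_{\ell,M}<\infty$ (immediate, since $C_\ell$ is a bounded trace-class covariance operator and $\psi_M$ is a finite combination of $L^2$ functions), that $V_{\ell,M}>0$ so the conditional law of $\langle X,\psi_M\rangle$ is nondegenerate (true since $C_\ell$ is strictly positive definite), and that the ratio in \eqref{cond:DH} is not of indeterminate $0/0$ type, i.e.\ $\mu_+\ne\mu_-$, which is implicit in the condition being stated. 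I expect no deeper obstacle: the content of the proposition is that \eqref{cond:DH} is, by design, the divergence of the signal-to-noise ratio of the truncated linear discriminant, which under Gaussianity is exactly what drives the error to zero.
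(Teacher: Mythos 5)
Your argument is correct, but note that the paper does not actually prove this proposition --- it imports it as Theorem 1 of Delaigle and Hall (2012) without proof, so there is no in-paper proof to compare against. Your reconstruction is the standard Delaigle--Hall centroid/linear-discriminant argument: the identification of $D_M=\langle\mu_+-\mu_-,\psi_M\rangle$ and $V_{\ell,M}=\langle C_\ell\psi_M,\psi_M\rangle$ with the numerator and denominator of \eqref{cond:DH} is exactly right, the exact Gaussian error formula is correct up to the null boundary event, and the divergence of the signal-to-noise ratio finishes the job. The closest argument the paper does contain is the proof of Proposition \ref{prop:dhhm} in the appendix, which uses the quadratic-difference classifier $f^*_M(x)=\langle x-\mu_+,\psi_M\rangle^2-\langle x-\mu_-,\psi_M\rangle^2$ (affine in $\langle x,\psi_M\rangle$, hence equivalent to your discriminant up to sign and scale) and bounds the error by Chebyshev's inequality rather than exact Gaussian tails; that version trades the sharp $\Phi$-rate for freedom from the Gaussianity assumption, which is what lets the paper convert the Delaigle--Hall condition into a hard-margin statement. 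Your points of care (nondegeneracy of $V_{\ell,M}$ via strict positive definiteness of $C_\ell$, well-posedness when $\mu_+\neq\mu_-$, measurability of the affine functional so it is admissible in $\inf_f R(f)$) are the right ones, and I see no gap.
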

\noindent
A similar result also holds without the Gaussianity of $X|Y$ (see Theorem 2 in \cite{delaigle2012achieving}).

This Delaigle--Hall condition gives a sufficient condition for Gaussian measures on infinite dimensional spaces to be mutually singular, based on the classical H\'ajek-Feldman theorem \cite{da2014stochastic}.
Since this type of singularity appears more easily than on finite-dimensional spaces, this shows one advantage of using infinite-dimensional functional data.
We provide an example of functional data distributions satisfying the Delaigle--Hall condition.

\begin{example}[Decaying Coefficients]
    We give a specific example of $\theta_j$ and $\mu_j$, and consider when the Delaigle--Hall condition is satisfied.
    Suppose that $\theta_j \asymp j^{-\alpha}$ with $\alpha > 0$ and  $\mu_j \asymp j^{-\beta}$ with $\beta > 0$ hold.
    Then, the Delaigle--Hall condition is satisfied with $2\beta - \alpha \leq 1$.
    Since $\alpha$ describes a complexity of the covariance $C(t,t')$ and $\beta$ is for smoothness of $\mu$, the Delaigle--Hall condition is more likely to be satisfied when the functional data is less smooth and the covariance decays quickly.
\end{example}

\subsection{Conditions}
We introduce assumptions for the fast convergence. 
Recall that $\mN(\varepsilon, \mX, d)$ denotes a covering number of $\mX$, which is common for the empirical process theory and statistical learning theory (for introduction, see \cite{van1996weak}).
We consider the following condition:
\begin{assumption}[Covering Bound]\label{asmp:covering}
    There exists constants $\bar{\varepsilon}>0$, $\gamma>0$, and $V>0$ such that for every $\varepsilon \in (0, \bar{\varepsilon})$, the following holds:
\begin{align*}
    \log \mN(\varepsilon,\mathcal{X},d)\leq V \varepsilon^{-\gamma}.
\end{align*}
\end{assumption}
\noindent
This type of assumption has been used by \cite{meister2016optimal} for convergence analysis of functional data.
$\gamma$ represents the complexity of the functional data space $\mX$ and controls the smoothness of $\mX$ and a dimension of inputs for $X$.

Assumption \ref{asmp:covering} restricts a form of functional data in $\mX$ in the following ways.
First, it requires a kind of continuity or differentiability for the functional data. 
The degree of smoothness is adjusted by the decay rate $\gamma$ in Assumption \ref{asmp:covering}.
Second, it also requires that a norm of the functional data is bounded.
This constraint excludes Gaussian processes, hence we have to use a truncated version of Gaussian processes instead.
We present examples that satisfy Assumption \ref{asmp:covering}.
Some of them are deferred to the supplementary material.

\begin{example}[Smooth Path] \label{example:smooth}
    For $\alpha  \in \N$, suppose that, $\mX$ is a set of functions $f$ on $[0,1]^p$ which has partial derivatives up to an order $\alpha -1$ that are uniformly bounded by some constant, and the highest partial derivatives are Lipschitz continuous.
    In this case, a setting $\gamma = p / \alpha$ satisfies Assumption \ref{asmp:covering} with $d = \|\cdot\|_{L^\infty}$ (Theorem 2.7.1 in \cite{van1996weak}). 
    \qed
\end{example}

\begin{example}[Non-Smooth Path] \label{example:holder}
    For $\alpha' \in (0,1]$, we consider a case that $\mX$ is a set of $\alpha'$-H\"older-continuous functions on $[0,1]^p$, which is a set of functions $f:[0,1]^p \to \R$ such that
    \begin{align*}
        |f(x) - f(x')| \leq C \|x-x'\|^\alpha,
    \end{align*}
    holds for every $x,x' \in [0,1]^p$ with some constant $C>0$.
    In this case, a setting $\gamma = p / \alpha$ satisfies Assumption \ref{asmp:covering} with $d = \|\cdot\|_{L^\infty}$ (Theorem 2.7.1 in \cite{van1996weak}).
    Note that this set includes non-differentiable functions.
    \qed
\end{example}

\begin{example}[Unbounded Path with Finite Peaks] \label{ex:unbouded}
We consider a family of functions $f$ on $[0,1]$ such that
\begin{align*}
    f(x) = g(x) + \sum_{j=1}^J \psi(x; a_j,t_j),
\end{align*}
where $g$ is a function from the Sobolev space with an order $\alpha \in N$ (a space of $\alpha$-times weakly differentiable functions in terms of $\|\cdot\|_{L^2}$), $\psi(x;a_j,t_j) = a_j/|(x-t_j)|^{1/3}$ is an unbounded peak function with scale parameters $a_j \in [0,1]$ and fixed locations $t_j \in [0,1]$, and $J$ is a number of peaks.
We can show that a set of such the functions satisfies Assumption \ref{asmp:covering} is satisfied with $\gamma = 1/\alpha$ and with $d = \|\cdot\|_{L^2}$ and sufficiently large $V>0$.
Specifically, see Proposition \ref{prop:unbounded} in the supplementary material.
\qed
\end{example}

We give the second condition for the distribution $\Pi$ of $X$.
For $x \in \mX$ and $\delta > 0$, define $B(x;{\delta})$ as an $x$ centered open ball with radius $\delta$ in terms of $\|\cdot\|$. Then, we impose the following assumption.
\begin{assumption}[Positive Small Ball Probability]\label{asmp:smallball} For any $x$ in a support of $\Pi$ and $\delta > 0$, 
$\Pi (B(x;{\delta})) > 0$ holds.
\end{assumption}
\noindent
This assumption is satisfied a general class of distributions even in the functional data setting.
We provide several examples in the supplementary material.

\subsection{Convergence Result}
We provide our main result on the convergence speed of the generalization error.
We provide a proof outline in the next section and the full proof in the supplementary material.
Details of constants will be explained in the same section.

\begin{theorem} \label{thm:convergence}
Let $\mathcal{H}$ be an RKHS on $\mX$ with a universal kernel.
Let $\hat{f}_n \in \mH$ be a classifier, minimizing the empirical loss as defined in \eqref{def:erm}.
Suppose that the Delaigle--Hall condition and Assumption \ref{asmp:covering} and \ref{asmp:smallball} hold. 
Then, there exist positive and finite constants $C_{V,\gamma} $ and $ C_{V,\Pi,\mH}$, such that the following inequality holds with any $\lambda \in [ \underline{\lambda}, \overline{\lambda}]$ with $\underline{\lambda} =  \max\{ (\log n)^{-{1} / {\gamma}} , C_{V,\gamma} \log \log n /n \}$ and $\overline{\lambda} = C_{V,\Pi,\mH}$, and any $n \in \N$ as $\overline{\lambda} \geq \underline{\lambda}$: 
\begin{align*}
    \Ep\left[R(\hat{f}_n) - \inf_{f \in \mH} R(f)\right]\leq 2\exp(-\beta n),
\end{align*}
where $\beta>0$ is a parameter which depends on $\mathcal{H}, \Pi$ and $V$.
\end{theorem}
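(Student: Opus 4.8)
The plan is to show that, with probability at least $1-2\exp(-\beta n)$, the estimator $\hat f_n$ is \emph{exactly} a perfect classifier (so that $R(\hat f_n)=\inf_{f\in\mH}R(f)=0$), and then to conclude via $R\le 1$. The argument has three stages: upgrading the Delaigle--Hall condition to a hard margin realized inside $\mH$; a deterministic reduction of $\{R(\hat f_n)>0\}$ to a deviation event for the regularized empirical risk; and an entropy-driven exponential bound for that deviation. For the first stage, I would use the spectral description of the covariances together with \eqref{cond:DH}: the divergence of the signal-to-noise series $\sum_{j\le M}\theta_j^{-1}\mu_j^2$, combined with Assumption~\ref{asmp:covering} (which makes $\mX$ totally bounded --- hence, after passing to its closure, compact --- and confines the paths to bounded norm), forces $\mathrm{supp}\,P_+$ and $\mathrm{supp}\,P_-$ to lie at a positive $L^2$-distance. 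Consequently there is a continuous function on $\mX$ with $Yg(X)\ge 2h_0$ a.s.\ for some $h_0>0$; universality of the kernel ($\mH$ dense in $C(\mX)$) then yields $f^\ast\in\mH$ with $Yf^\ast(X)\ge h_0$ a.s. In particular $R(f^\ast)=0$, so $\inf_{f\in\mH}R(f)=0$ and the excess risk is just $R(\hat f_n)$.

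For the second stage, set $f_\lambda=\argmin_{f\in\mH}\{\Ep\,\ell(Yf(X))+\lambda\|f\|_\mH^2\}$. Comparing $f_\lambda$ with the ray $\{c f^\ast:c>0\}$ and balancing the (exponentially small) loss $\ell(ch_0)$ against the penalty $\lambda c^2\|f^\ast\|_\mH^2$ shows that $f_\lambda$ is itself a perfect classifier with a regularized margin $Yf_\lambda(X)\ge h_\lambda>0$ a.s., where $h_\lambda$ and $\|f_\lambda\|_\mH$ are controlled by $\lambda,h_0,\|f^\ast\|_\mH$; the same comparison applied to $\hat f_n$ and the empirical objective bounds $\|\hat f_n\|_\mH$, so both $\hat f_n$ and $f_\lambda$ lie in an $\mH$-ball $\mB$ of radius $\asymp\|f_\lambda\|_\mH$. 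By the reproducing inequality \eqref{ineq:prop_rkhs} (with $c_\mH=1$), $\sup_{x\in\mX}|\hat f_n(x)-f_\lambda(x)|\le\|\hat f_n-f_\lambda\|_\mH$, so on $\{\|\hat f_n-f_\lambda\|_\mH<h_\lambda\}$ the functions $\hat f_n$ and $f_\lambda$ agree in sign on $\mathrm{supp}\,\Pi$ and hence $R(\hat f_n)=0$; Assumption~\ref{asmp:smallball} (with continuity of $\hat f_n,f_\lambda$) is what lets these $\Pi$-a.s.\ statements be read as statements about all of $\mathrm{supp}\,\Pi$. Therefore $\Ep[R(\hat f_n)-\inf_{f\in\mH}R(f)]\le\Pr(\|\hat f_n-f_\lambda\|_\mH\ge h_\lambda)$, and since $\frac1n\sum_i\ell(Y_if(X_i))+\lambda\|f\|_\mH^2$ is $2\lambda$-strongly convex with minimizer $\hat f_n$ while $f_\lambda$ minimizes its population version, a standard convexity manipulation bounds $\lambda\|\hat f_n-f_\lambda\|_\mH^2$ by $2\sup_{f\in\mB}\big|\frac1n\sum_i\ell(Y_if(X_i))-\Ep\,\ell(Yf(X))\big|$.

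For the third stage, it remains to prove $\Pr\big(\sup_{f\in\mB}|\tfrac1n\sum_i\ell(Y_if(X_i))-\Ep\,\ell(Yf(X))|\ge\tfrac12\lambda h_\lambda^2\big)\le 2\exp(-\beta n)$. Every $f\in\mB$ is $(\asymp\|f_\lambda\|_\mH)$-Lipschitz on $\mX$ by \eqref{ineq:prop_rkhs}, and coupling this Lipschitz property with the covering bound of Assumption~\ref{asmp:covering} gives a bound on the $\|\cdot\|_{L^\infty(\mX)}$-metric entropy of the loss class $\{x\mapsto\ell(yf(x)):f\in\mB,\,y\in\{-1,1\}\}$ --- this is the new functional-data entropy estimate. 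A symmetrization and chaining bound then gives $\Ep\sup_{f\in\mB}|\cdots|\le\tfrac14\lambda h_\lambda^2$ as soon as $\lambda\ge\underline{\lambda}$ (the prescribed form $\underline{\lambda}=\max\{(\log n)^{-1/\gamma},C_{V,\gamma}\log\log n/n\}$ is precisely what makes the entropy term fit under the regularized margin $\lambda h_\lambda^2$), while $\lambda\le\overline{\lambda}$ keeps $f^\ast\in\mB$ and all constants finite. Finally, Talagrand's concentration inequality for this bounded empirical process upgrades the mean bound to the tail bound $\Pr(\sup_{f\in\mB}|\cdots|\ge\tfrac12\lambda h_\lambda^2)\le\exp(-\beta' n)$; a second, similar exponentially small term arises from the event (controlled via Assumption~\ref{asmp:smallball}) that the observed points fail to be dense in $\mathrm{supp}\,\Pi$, and a union bound produces the factor $2$ and the stated $2\exp(-\beta n)$ with $\beta$ depending only on $\mH,\Pi,V$.

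The main obstacle is the first stage: \eqref{cond:DH} is only an asymptotic divergence of a ratio of series built from the covariance spectrum, and extracting from it a \emph{uniform, strictly positive} margin attained by a single $f^\ast\in\mH$ requires genuinely using the covariance structure together with the compactness delivered by Assumption~\ref{asmp:covering} --- and is exactly where Gaussian path distributions must be ruled out. The second delicate point is the entropy estimate of the third stage: the classifier class is a Lipschitz class over an infinite-dimensional $\mX$, so its entropy integral is governed by $\mN(\cdot,\mX,d)$ rather than by a finite dimension, and one must verify that over the prescribed $\lambda$-range the induced deviation stays below $\lambda h_\lambda^2$.
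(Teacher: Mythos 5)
Your Stage 1 contains the critical gap. The Delaigle--Hall condition does \emph{not} imply that $\mathrm{supp}\,P_+$ and $\mathrm{supp}\,P_-$ are at positive $L^2$-distance, even on the totally bounded $\mX$ of Assumption~\ref{asmp:covering}. The discriminating functional that DH produces is $x\mapsto\langle x,\psi_M\rangle$ with $\psi_M=\sum_{j\le M}\theta_j^{-1}\mu_j\phi_j$, whose $L^2$-norm typically diverges as $M\to\infty$; its limit separates $P_+$ from $P_-$ $\Pi$-almost surely but is \emph{not} $L^2$-continuous, so no geometric separation of supports follows. Concretely, take $P_-$ the law of $\sum_j\theta_j^{1/2}Z_j\phi_j$ with $Z_j$ uniform on $[-1,1]$ and $P_+$ its translate by $\mu=\sum_j\mu_j\phi_j$ with $\mu_j^2=\theta_j/j$: then $\sum_j\theta_j^{-1}\mu_j^2=\infty$ (DH holds) yet the two supports overlap. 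Consequently there need not exist any continuous $g$ with $Yg(X)\ge 2h_0$ a.s., universality gives you no $f^\ast\in\mH$ with a uniform margin, and everything downstream --- $\inf_{f\in\mH}R(f)=0$ realized with margin, the margin $h_\lambda$ of $f_\lambda$, and the sign-transfer via $\sup_x|\hat f_n(x)-f_\lambda(x)|\le\|\hat f_n-f_\lambda\|_\mH$ --- loses its foundation. This is precisely why the paper keeps the margin on the (discontinuous, non-$\mH$) quadratic discriminant $\tilde f^\ast$ (Proposition~\ref{prop:dhhm}, proved by Chebyshev on the weighted scores, not by support separation) and never asserts a margin for an element of $\mH$.

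A second, independent gap sits in your Stage 2: comparing the value of the regularized population objective at $f_\lambda$ with its value along the ray $\{cf^\ast\}$ controls only $\Ep\,\ell(Yf_\lambda(X))$, which yields at best $\Pi(Yf_\lambda<h)\le\Ep\,\ell(Yf_\lambda)/\ell(h)$ --- a small probability of a small margin, not the almost-sure margin $Yf_\lambda(X)\ge h_\lambda>0$ your sign-agreement step requires. The paper avoids this by a genuinely pointwise argument: for each fixed $x$ in the margin set it perturbs $\hat f_n$ in the direction of a localized RKHS bump $h\in\mH(x,\delta_0)$ and shows, via Bernstein's inequality and the small-ball mass of Assumption~\ref{asmp:smallball}, that the event $\{\hat f_n(x)\le 0,\ \|\hat f_n\|_\mH\le U\}$ contradicts the stationarity condition $\nabla L_n(\hat f_n)=0$ except with probability $2\exp(-n\bar p(\delta_0)/C_{L,U})$; the remaining event $\{\|\hat f_n\|_\mH>U\}$ is handled by peeling together with the entropy bound of Lemma~\ref{lem:covering}. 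Your Stage 3 (strong convexity plus chaining plus Talagrand) is sound in isolation and close in spirit to the paper's treatment of $T_2$, but it cannot rescue the argument without an a.s.\ margin inside $\mH$, which the hypotheses do not supply.
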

\noindent
The result shows that the very fast convergence of the generalization error is obtained under the Delaigle--Hall condition and sufficient sample size.
In other words, since this convergence is exponential in $n$, we can see that the error decays faster than all polynomial convergence in $n$.
This is contrastive to the logarithmic convergence by \cite{meister2016optimal}, that is, $R(\tilde{f}_n) - \inf_{f}R(f) \geq C (\log n)^{-1/\gamma}$ holds.
This suggests that adding the Delaigle--Hall condition reduces the complexity of the functional classification problem more than expected.

Technically, the following two points are important.
First, the minimum number of required $n$ is determined by $\gamma$, which reflects the complexity of functional data in Assumption \ref{asmp:covering}.
When functional data are more complex, that is $\gamma$ is large, the required sample size increases.
Second, $\beta$ also depends on various parameters and it is complicated to describe.
Its rigorous values will be provided in the full proof.

\begin{remark}[Role of RKHS]
We utilize RKHSs for classifiers for the following reasons.
First, the property of pointwise bound \eqref{ineq:prop_rkhs} by RKHSs is important for the error analysis.
Second, an RKHS is closely related to the Delaigle--Hall condition \eqref{cond:DH}, since the condition is regarded as measuring the difference of means of the distributions in terms of an RKHS norm.
This relation makes our error analysis simple.
\end{remark}
\begin{remark}[Selection of $\mH$]
We discuss the effect of the choice of RKHS. 
The exponential convergence in $n$, which is the main claim of Theorem \ref{thm:convergence}, holds for all RKHSs regardless of their choice as long as the requirements are satisfied. 
\end{remark}

\subsection{Proof Overview}

The proof of Theorem \ref{thm:convergence} contains the following three steps: (i) rewrite the Delaigle--Hall condition by a hard-margin condition, (ii) decomposition of the misclassification error, and (iii) study each of the components. 
Hereafter, we set $L$ as a Lipschitz constant of $f^*$ and assume that $\|f^*\|_\mH \geq 1$ without loss of generality.

\textbf{Step (i): Rewrite the Delaigle--Hall condition to the hard-margin condition}:
We first introduce the hard-margin condition, which is a general condition for various classification problems:
\begin{definition}[Hard-Margin Condition] \label{asmp:hard-margin}
    A margin of $\Pi$ with $f: \mX \to \R$ is defined as
    \begin{align*}
     \delta(f, \Pi) =\sup\left\{\delta: \Pi(\{x:|f(x)|<\delta \})=0\right\},
    \end{align*}
    We say $\Pi$ satisfies the hard-margin condition with given $f$, if $\delta(f,\Pi) > 0$ holds.
\end{definition}
This condition requires that a discrepancy between sets $\{x:f(x)>\delta\}$ and $\{x:f(x)<-\delta\}$ is large almost surely.
In other words, the margin with $f$ is contained in a $\Pi$-null set.
A margin is a useful notion to handle the difficulty of classification problems.
This condition is related to a common condition for other classification problems, referred to as a strong noise condition \citep{koltchinskii2005exponential,audibert2007fast}.

To show connection between the Delaigle--Hall and the hard-margin condition, we introduce $f^*$ as follows.
We define a sum of orthogonal basis $\psi_M = \sum_{j=1}^M \theta_j^{-1} (\mu_{+,j} - \mu_{-,j}) \phi_j$ and $f^*_M$ as
\begin{align*}
    f^*_M(x) =  \left( \left\langle x  -  \mu_+, \psi_M\right\rangle \right)^2 - \left( \left\langle x  - \mu_-, \psi_M  \right\rangle \right)^2.
\end{align*}
Also, we define $f^* = \lim_{M \to \infty} f_M^*$.
This function measures whether the input $x$ is closer to $\mu_+$ or $\mu_-$ with the weight $\psi_\infty$, and a sign of $f^*(x)$ works as a classifier.
Then the following result shows the equivalence property between the Delaigle--Hall and hard-margin conditions:
\begin{proposition}[Delaigle--Hall implies hard-margin]\label{prop:dhhm}
If the Delaigle--Hall conditions holds, then, $\delta(f^*, \Pi) > 0$ holds.
\end{proposition}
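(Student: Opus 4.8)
The plan is to show that $f^*$, evaluated on a random function $X\sim\Pi$, is bounded away from zero on a set of full $\Pi$-measure. The key observation is that $f^*$ has a product structure: writing $Z_+(x)=\langle x-\mu_+,\psi_\infty\rangle$ and $Z_-(x)=\langle x-\mu_-,\psi_\infty\rangle$, we have $f^*(x)=Z_+(x)^2-Z_-(x)^2=(Z_+(x)-Z_-(x))(Z_+(x)+Z_-(x))$. The first factor is deterministic: $Z_+(x)-Z_-(x)=\langle \mu_--\mu_+,\psi_\infty\rangle=-\sum_{j}\theta_j^{-1}\mu_j^2$, which is a nonzero constant (indeed, under the Delaigle--Hall condition the partial sums $\sum_{j=1}^M\theta_j^{-1}\mu_j^2$ diverge to $+\infty$, so in the limit this quantity is strictly negative and bounded away from $0$). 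Hence $|f^*(x)| = \bigl(\sum_j\theta_j^{-1}\mu_j^2\bigr)\cdot |Z_+(x)+Z_-(x)|$, and it suffices to show that $|Z_+(x)+Z_-(x)|$ is $\Pi$-a.s. bounded below by a positive constant.

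First I would make rigorous the limiting object $\psi_\infty$ and the random variable $Z_\ell(X)=\langle X-\mu_\ell,\psi_\infty\rangle$: one must check that $\psi_M$ converges (or at least that $\langle X-\mu_\ell,\psi_M\rangle$ converges in an appropriate sense, e.g.\ in $L^2(\Pi)$ and $\Pi$-a.s.\ along a subsequence), using the spectral expansions of $C_+,C_-$ and the Delaigle--Hall summability structure; this is where the denominator in \eqref{cond:DH} enters, since it controls $\operatorname{Var}_{P_\ell}(\langle X_\ell,\psi_M\rangle)=\sum_{j}\theta_{\ell j}\bigl(\sum_i\theta_i^{-1}\mu_i\int\phi_i\phi_{\ell j}\bigr)^2$. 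Next I would compute the conditional law of $Z_+(X)+Z_-(X)$ given $Y=\ell$. Writing $U=\langle X-\mu_+,\psi_\infty\rangle+\langle X-\mu_-,\psi_\infty\rangle = 2\langle X,\psi_\infty\rangle-\langle\mu_++\mu_-,\psi_\infty\rangle$, this is an affine functional of $X$; under $Y=\ell$ it has some mean $m_\ell$ and variance $v_\ell$. The crucial point is to show that the Delaigle--Hall condition forces the \emph{standardized separation} between the two conditional distributions to blow up: concretely, that $|m_+-m_-|^2/(v_++v_-)\to\infty$, which is essentially the content of \eqref{cond:DH} (this is exactly the quantity whose divergence yields mutual singularity / perfect classification in \cite{delaigle2012achieving}). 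In the common-eigenfunction case this reduces transparently to $\sum_j\theta_j^{-1}\mu_j^2=\infty$.

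Once the separation is infinite, the conclusion follows: on $\{Y=+\}$ the variable $Z_+(X)$ concentrates near $0$ (its variance is $\sum_j\theta_{+j}(\cdots)^2$, which is dominated by the separation and can be normalized away) while $Z_-(X)$ is far from $0$, so $f^*(X)=Z_+(X)^2-Z_-(X)^2$ is $\Pi$-a.s.\ negative and bounded away from $0$; symmetrically on $\{Y=-\}$ it is a.s.\ positive and bounded away from $0$. Combining the two labels via $\Pi=wP_-+(1-w)P_+$ gives a single $\delta_0>0$ with $\Pi(\{x:|f^*(x)|<\delta_0\})=0$, i.e.\ $\delta(f^*,\Pi)\geq\delta_0>0$. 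The main obstacle I anticipate is the first technical step: justifying the $M\to\infty$ limit and the $\Pi$-a.s.\ (not merely $L^2$) lower bound on $|Z_+(X)+Z_-(X)|$ without a Gaussianity assumption — one likely needs to normalize $\psi_M$ by its variance before passing to the limit, argue that the normalized functionals separate the two conditional measures by a diverging amount, and then invoke a Borel--Cantelli or direct support argument to upgrade the distributional separation to an almost-sure margin; handling the cross terms $\int\phi_i\phi_{\ell j}$ between the pooled basis $\{\phi_j\}$ and the label-wise bases $\{\phi_{\ell j}\}$ is the computational heart of this step.
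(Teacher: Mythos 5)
Your proposal is correct and follows essentially the same route as the paper: both reduce the margin to the fact that the deterministic ``signal'' $\langle\mu_+-\mu_-,\psi_M\rangle=\sum_{j\le M}\theta_j^{-1}\mu_j^2$ dominates the random fluctuation $\langle X-\mu_\ell,\psi_M\rangle$, whose variance $\alpha_\ell^2$ is exactly the denominator of the Delaigle--Hall ratio, and then convert this diverging signal-to-noise ratio into a vanishing probability of a small value of $|f^*|$. The obstacle you anticipate at the end is resolved in the paper more simply than you fear: one applies Chebyshev's inequality to $f_M^*$ at each finite $M$, obtaining $\Pi(|f_M^*|\le\delta)\lesssim \alpha_\ell^2\langle\mu_+,\psi_M\rangle^2/(\langle\mu_+,\psi_M\rangle^2-\delta)^2\to 0$, and then lets $M\to\infty$ --- no Borel--Cantelli argument or almost-sure convergence of $\psi_M$ is invoked.
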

\noindent
Proposition \ref{prop:dhhm} shows that the Delaigle--Hall condition leads to the margin of $\Pi$ being positive, and this implication is similar to Theorem 5 in \cite{berrendero2018use}, which states that Delaigle--Hall condition is equivalent to a discrepancy of supports of two measures $P_+$ and $P_-$ under the Gaussian homoscedastic model.
Also, Proposition \ref{prop:dhhm} shows that $f^*$ is an effective classifier with a sufficiently large margin under the Delaigle--Hall condition. 
This proof is based on an idea in \cite{delaigle2012achieving}, which applies a property that distances between functional data become infinitely large under the weighting by $\theta_j$ from the covariance.

\textbf{Step (ii): Generalization Error Decomposition}:
In preparation, we convert the perfect classifier $f^*$ into a controllable form.
To the aim, we define $\tilde{f}_M(x) := f_M(x)/|f_M(x)|$ and $\tilde{f}^*(x)= \lim_{M\to\infty}\tilde{f}_M(x)$. Since the risk only depends on the sign of $f^*$, we have $R(\tilde{f}^*)=R(f^*)$.
In the following, we study the classification error based on $\tilde{f}^*$ rather than $f^*$.

The first step is to rewrite the generalization error to an integral which involves a probability associated with the signs of $\tilde{f}^*$ and $\hat{f}$. 
The standard calculation yields the following transformation by the Bochner integral:
\begin{eqnarray*}
    \Ep[R(\hat{f}_n)-R(\tilde{f}^*)]\leq \int |\eta(x)| \, \Pr(\hat{f}_n(x) \tilde{f}^*(x)\leq 0) \,d\Pi(x),
\end{eqnarray*}
where $\eta(x) = \Ep[Y|X=x]$.
Next, for each $x$, we decompose the probability term $ \Pr(\hat{f}_n(x) \tilde{f}^*(x)\leq 0)$.
For $x$ such that $\tilde{f}^*(x) > 0$ holds, the misclassification error is rewritten as
\begin{align*}
    \Pr(\hat{f}_n(x) \tilde{f}^*(x)\leq 0)=\Pr(\hat{f}_n(x)\leq 0)\leq \underbrace{\Pr(\hat{f}_n(x)\leq 0\,,\,\|\hat{f}_n\|_\mH\leq U)}_{= T_1}+ \underbrace{\Pr(\|\hat{f}_n\|_\mH>U)}_{= T_2}, 
\end{align*}
with a threshold value $U > 0$ which will be specified in the full proof.
We divide the event by the value of $\|\hat{f}_n\|_\mH$ associated with $U$, then study each of the probability terms separately.

\textbf{Step (iii): Bound the Probability Terms:} For $T_1$, we bound it using the hard-margin condition.
Let us define $L_n(f) := n^{-1} \sum_{i=1}^n \ell(Y_i f(X_i)) + \lambda \|f\|_{\mH}^2$.
We show that $\hat{f}_n$ cannot be a minimizer of $L_n(f)$ as \eqref{def:erm}, when $T_1$ is large under the hard-margin condition.
Then, by the contradiction, we prove that $T_1$ converges exponentially in $n$.
This part mainly follows the same proof in \cite{koltchinskii2005exponential}.

For $T_2$, we bound it by the empirical process technique.
This part is specific to functional data, hence some theory such as \cite{koltchinskii2005exponential} does not work.
First, show that to bound an excess loss $L_n(\hat{f}_n) - L_n(\tilde{f}^*)$ is sufficient to achieve the goal.
To show the convergence of the excess loss, we develop a covering number bound for $\mH$ (Lemma \ref{lem:covering} in the supplementary material) and develop the following bound with probability at least $1-\exp({-t})$:
\begin{align*}
    |L_n(f) - L(f)|\leq Rc_{V,\gamma}{(\log n)^{-1/\gamma}}+\sqrt{{2t}/{n}},
\end{align*}
for any $f \in \mH$ such that $\|f\|_\mH \leq \|f^\dagger\|_\mH$ holds and any $t > 0$ (Lemma \ref{lem:empirical_bound} in the supplementary material).
Here, $c_{V,\gamma}$ is a constant depending on $V$ and $\gamma$, which will be specified in the full proof.
As a consequence, a sufficiently large $n$ achieves the goal.

\section{Experiments}\label{sec:experiment}

We conduct numerical experiments to support this theoretical result, that is, we analyze the change in the convergence rate of various classification methods for functional data under the Delaigle--Hall and hard-margin conditions.

\subsection{Experimental Setting}

For the functional classification problem, we consider the following settings.
We generate functional data from two groups with labels $\{-1,1\}$. For each group, we generate $n$ functions on $\mT = [0,1]$ with a orthogonal basis $\phi_0(t) = 1$ and $\phi_j(t) = \sqrt{2} \sin (\pi jt), \forall j \geq 1$.
$n$ is set from $1$ to $3000$.
For a label $+1$, we generate functional data $X_{i+}(t) = \sum_{j=0}^{50} (\theta_j^{1/2} Z_{j+} + \mu_{j+}) \phi_j(t)$ with random variables $Z_{j+}$ and coefficients $ \theta_j, \mu_{j+}$ for $j=0,1,...,50$ and $i=1,...,n$.
Similarly, for a label $-1$, we generate $X_{i-}(t) = \sum_{j=0}^{50} (\theta_j^{1/2} Z_{j-} + \mu_{j-}) \phi_j(t)$ with random variables $Z_{j-}$ and coefficients $ \mu_{j-}$.

We consider the following two scenarios, and the values of the random variables and coefficients are determined separately.
In \textit{Scenario 1}, to consider perfect classifiable data by the Delaigle--Hall condition, we set $\theta_j = j^{-2}$, $\mu_{j-}=0$, and change $\mu_{j+} = j^{-\gamma}$ and draw $Z_{j+},Z_{j-}$ from standard normal Gaussian.
Here, $\gamma$ handles the complexity of a mean of functional data and thus determines the data generating process satisfies/violates the Delaigle--Hall condition. 
If $\gamma \leq 3/2$, the Delaigle--Hall condition is satisfied or violated otherwise. 
In \textit{Scenario 2}, we examine perfect classification according to the hard-margin condition. 
We set $\theta_j = j^{-2},$ $\mu_{j-}= 0$ and adjust $\mu_{j+} = \mone\{j = 0\} \mu$,  and let $Z_{j+},Z_{j-}$ be from uniform distribution on $[-1/2,1/2]$.
Here, $\mu$ is a key parameter to satisfy/violate the hard-margin condition. 
If $\mu \geq 1$ holds, the hard-margin conditions are satisfied, since domains of $P_+$ and $P_-$ do not overlap with each other.
Otherwise, the hard-margin condition is violated. 
With each method and $n$, we study its misclassification rate with $1000$ newly generated data for test. 
We repeat each simulation experiment $200$ times and report its mean. 
The case where basis functions differ between labels is discussed in the supplemental material.

\subsection{RKHS Classifier and the Delaigle--Hall/hard-margin Condition}

We study the misclassification rate of the RKHS method in \eqref{def:erm}.
We set the loss function as the logit loss $\ell(u) = \log (1+ \exp({-u}))$, and the hypothesis space $\mH$ is constructed by the functional RKHS associated with the Gaussian kernel $k(x,x') = \exp(-\|x-x'\|^2/h)$ with functions $x = x(t)$and $ x'=x'(t)$, and a hyper-parameter $h > 0$.
The norm in the kernel is calculated as $\sum_{j=0}^\infty (\xi_j - \xi_j')^2$ where $\xi_j = \langle x, \phi_j \rangle$ and $\xi_j' = \langle x', \phi_j \rangle$. 
By the representer theorem (Theorem 5.5 in \cite{steinwart2008support}), the minimization problem is rewritten as
\begin{align*}
    \min_{\{w_j\}_{j=1}^n} \frac{1}{n} \sum_{i=1}^n \ell\left( Y_i \sum_{j=1}^n w_j k(X_i,X_j) \right) + \lambda \sum_{j=1}^n w_j^2,
\end{align*}
with the parameters $w_1,...,w_n$.
We solve the optimization problem by the gradient descent method. The bandwidth $h$ and the penalized parameter $\lambda$ are determined by cross-validation (CV) from $\{2^{-5},2^{-4},\ldots,2^{4}\}$, minimizing the misclassification rate with newly generated test data.

In Scenario 1, we consider configurations of the mean decay parameter as $\gamma \in \{1.6,1.7\}$ to satisfy the Delaigle--Hall condition, or $\gamma \in \{ 1.3,1.4\}$ to violate the condition.
In Scenario 2, we consider $\mu \in \{0.8,0.9\}$ to satisfy the hard-margin condition, or $\mu \in \{1.1,1.2\}$ to violate it.
For each scenario, we plot \textit{error} (logarithm of misclassification error) against $\log n$ in Figure \ref{fig:proposed}. 

\begin{figure}[htbp]
\centering
  \hspace*{-1cm}
  \includegraphics[width=0.9\hsize]{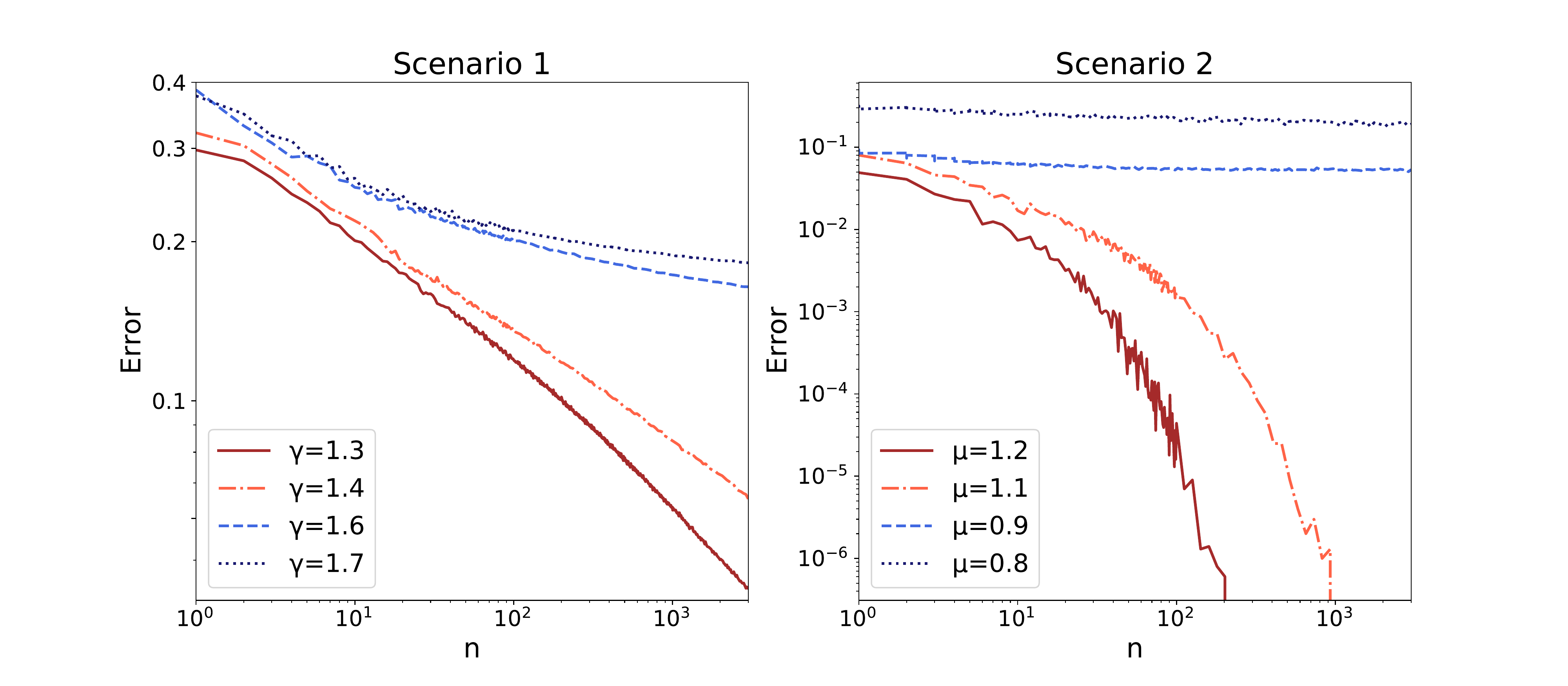}
  \caption{Error (logarithm of misclassification error rate) by the RKHS against $\log n$.
  Left: Scenario 1 for the Delaigle--Hall condition with $\gamma \in \{1.3\,\mathrm{(solid)},1.4\,\mathrm{(dashes)},1.6\,\mathrm{(dots)},1.7\,\mathrm{(dotdash)}\}$. Right: Scenario 2 for the hard-margin condition with $\mu \in \{1.2\,\mathrm{(solid)},1.1\,\mathrm{(dashes)},0.9\,\mathrm{(dot)},0.8\,\mathrm{(dotdash)}\}$.
\label{fig:proposed}}
\end{figure}

We achieve the following findings from the results:
(i) In Scenario 1 for the Delaigle--Hall condition, the error curves show slight differences in shape as well as slope.
That is, the curves are convex when $\gamma = 1.6$ or $1.7$ (the Delaigle--Hall condition is not satisfied), hence they seem to have a slow convergence.
(ii) In Scenario 2 for the hard-margin condition, the error curves show fast convergence only when $\mu=1.2$ and $1.1$ (the hard-margin condition is satisfied).
The results show that the conditions have an effect on the decay speed or errors, weakly for the Delaigle--Hall condition and drastically for the hard-margin condition.

We investigate an effect of a bandwidth election on the results. 
Specifically, we consider Scenario 1 and set the bandwidth to 10, 50, and 100, repeated each simulation 200 times, and calculated the average classification error.
The results are shown in Figure \ref{fig:bandwidth}.
As the bandwidth increases, the decay of errors becomes more gradual.
When bandwidth is large, the perfect classification does not hold, because the expressive power of the kernel is reduced.
Therefore, regardless of the value of $\gamma$, the exponential decay of errors becomes harder to hold.

\begin{figure}[H]
\centering
  \hspace*{-1cm}
  \includegraphics[width=0.75\textwidth]{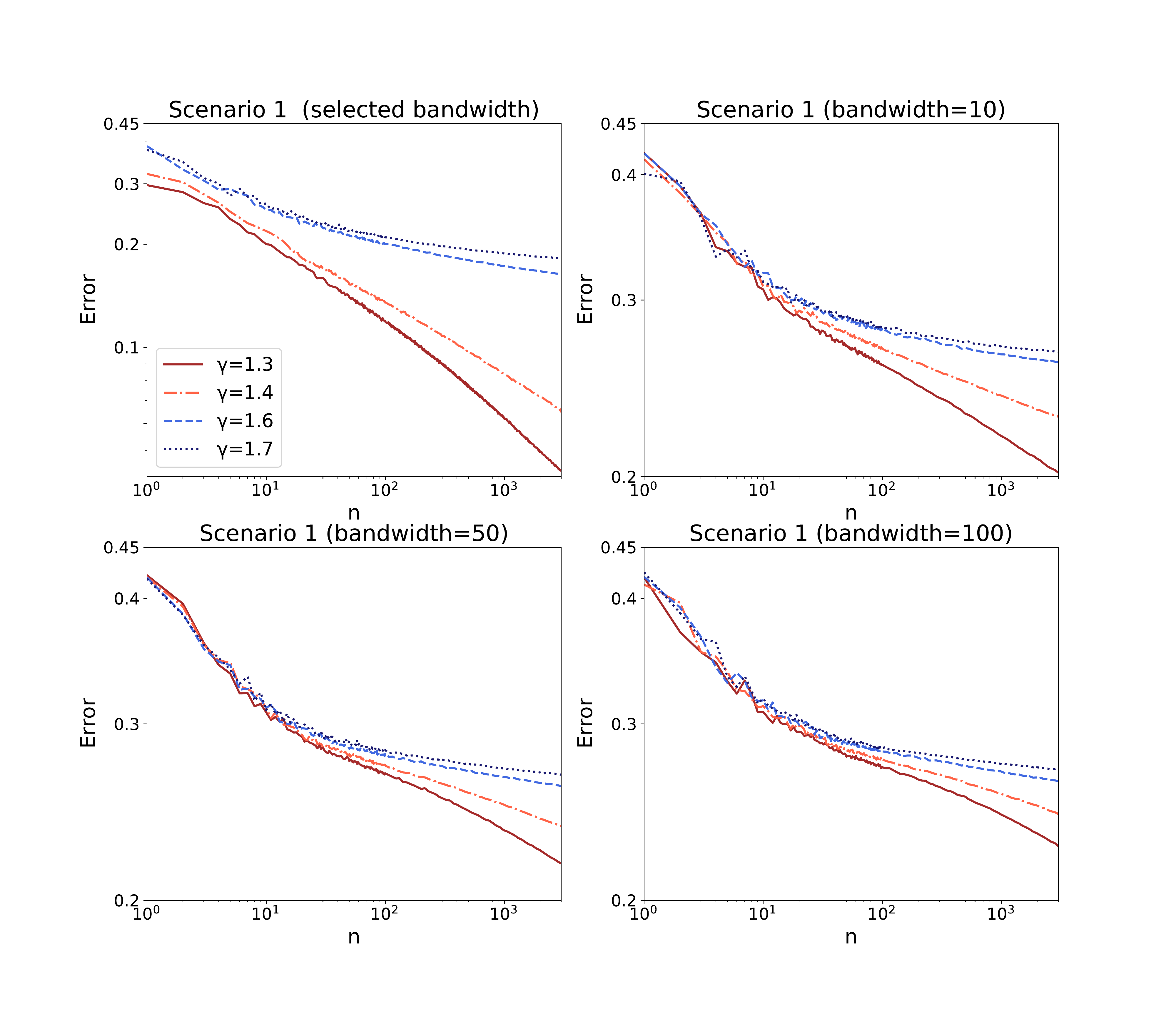}
  \caption{Error (logarithm of misclassification error rate) by the RKHS against $\log n$.
  Upper left: Scenario 1 with bandwidth selected by CV.
  Other three: Scenario 1 for bandwidth $\lambda=10,50,100$. 
  \label{fig:bandwidth}}
\end{figure}

\subsection{Others Methods and the Delaigle--Hall / hard-margin Condition}
We compare the misclassification errors of several common classification methods for functional data.
We consider the following classifiers: (a) the kernel classifier \citep{dai2017optimal}; (b) centroid method \citep{delaigle2012achieving}; (c) centroid method with partial least square (PLS) \citep{preda2007pls}; (d) logistic regression with Gaussian process (GP); and (e) linear discriminant analysis (LDA). The hyper-parameters in (b) anc (c) are chosen by the same way as \cite{delaigle2012achieving}. The bandwidth of the kernel in (a) and the number of components for dimension reduction in (e) are selected by CV. The hyper-parameters in (d) are optimized by Algorithm 5.1 in \cite{ki2006gaussian}.
We set $n$ from $5$ to $1000$.
The rest of the settings of the data generating process and the RKHS method are the same as those of the previous sections.

The results are shown in Figure \ref{fig:comparison}: the left column is for the Scenario 1 with $\gamma=1.3,\,1.4,\,1.6$ and $1.7$ and the right is for the Scenario 2 with $\mu=1.2,\,1.1,\,0.9$ and $0.8$. 
In Scenario 1 (left column), the parameter $\gamma$ does not have a significant impact on the curves, although the RKHS method shows a slight difference in shape as in the previous section.
In Scenario 2 (right column), the parameter $\mu$ has a significant impact.
As $\mu$ increases and the hard-margin condition is satisfied, the nonlinear methods (RKHS, GP, centroid, and kernel classifier) achieve fast convergence.
In contrast, the linear methods (PLS and LDA) do not.
This finding indicates that the nonlinear methods have the potential ability to achieve fast convergence with the hard-margin condition.

\begin{figure}[tbp]
\centering
  \includegraphics[width=0.65\hsize]{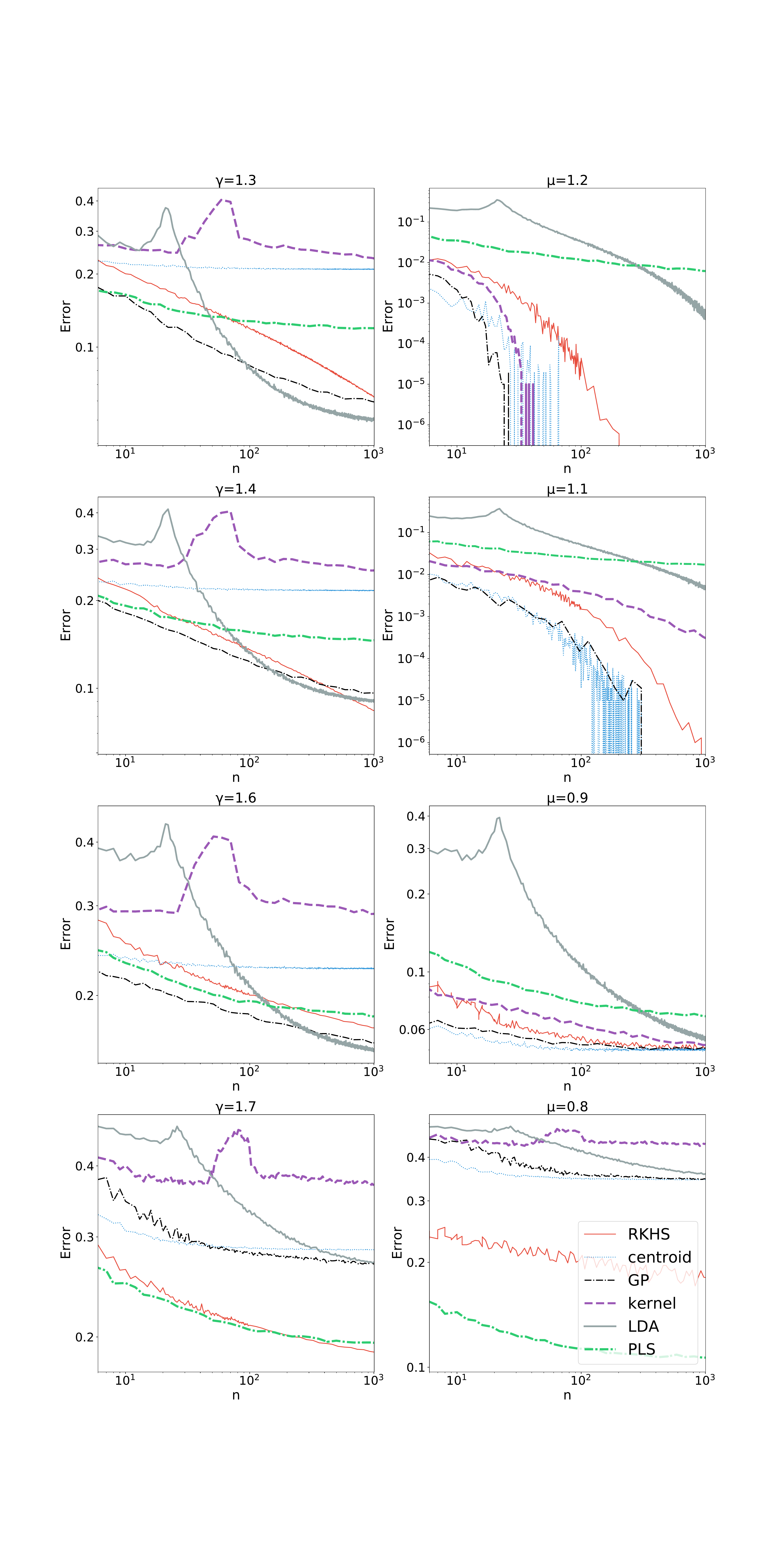}
  \vspace*{-0.7cm}

  \caption{
  Error (logarithm of misclassification error) by the RKHS against $\log n$ of the RKHS method (solid), centroid method (dots), logistic regression with Gaussian process (dashdot), kernel classifier (bold dash), linear discriminant analysis (bold solid), centroid method with partial least square (bold dashdot).
  Left column: Scenario 1 with $\gamma \in \{1.3,1.4,1.6,1.7\}$. 
  Right: Scenario 2 for the hard-margin condition with $\mu \in \{1.2,1.1,0.9,0.8\}$. \label{fig:comparison}}
\end{figure}

\section{Conclusion and Discussion} \label{sec:conclusion}

In this study, we investigate the convergence rate of the misclassification error of the classification problem for functional data, and discuss the feasibility of a small error with finite samples.
The Delaigle--Hall condition guarantees the existence of a perfect classifier, which is a specific condition for functional data that cannot occur for finite-dimensional data. 
However, the minimax rate of misclassification error with functional data, that is, the worst-case error, follows logarithmic convergence in sample size, hence it was not clear whether we can enjoy the perfect classification in practice with a realistic sample size.
Our result reveals that the DH condition leads not only to the existence of a perfect classifier but also to the exponential convergence of the error.
It indicates that the DH condition is also helpful in the sense of estimation from finite samples. 
This reveals the specific advantage of treating functional data explicitly since the DH condition is specific to infinite-dimensional data.

We note that Assumption \ref{asmp:covering} on a covering number restricts an available class of functional data. 
This is unavoidable as long as we handle the properties of functional data in a uniform way using the notion of metric entropy. 
A hopeful way to avoid this is a spectral decomposition-based approach, such as \cite{hall2007methodology}, which directly deals with the randomness of functional data without entropy.

A feature of this study is that the considered classifier is very typical and different from modern adaptive methods, such as neural networks. 
However, owing to this simplicity, we succeed in clarifying the theoretical properties with the perfect classification. 
Moreover, since an analysis of the adaptive methods is often conducted by extensions of analysis for simple methods, our results may serve as a basis for further analysis.

\section*{Acknowledgements}
The authors would like to thank the anonymous referees, an Associate Editor and the Editor for their constructive comments that improved the quality of this paper.
M.Imaizumi was supported by JSPS KAKENHI (18K18114) and JST Presto (JPMJPR1852).

\bibliography{main}
\bibliographystyle{apecon}

\appendix

\section{Proof of Lemma \ref{lem:classifier}} \label{app:lemma1}

\begin{proof}[Proof of Lemma \ref{lem:classifier}]
$f_0$ minimizes $R(f)$, if $\sgn(f_0(x)) = \sgn (\Pr(Y=1 | x) - \Pr(Y=-1|x))$ is satisfied.
The Radon-Nikodym derivative for $\Pr(Y=1|X)\Pi(X) = \Pr(X|Y=1)\Pr(Y=1) = P_+(X)(1-w)$ in terms of $\Pi$ implies $\Pr(Y=1|x) = (1-w) p_+(x)$.
Similarly, we have $\Pr(Y= -1|x) = w p_-(x)$.
Hence, $f_0$ has the desired property.
\end{proof}

\section{Note on Assumption 1}

We firstly provide additional example on Assumption \ref{asmp:covering}.

\begin{example}[Monotone/Convex Path]
Assume $\mX$ is a set of component-wise monotonic functions from $[0,1]^p$ to $[0,1]$ with $p \geq 2$. 
With $\gamma=2(p-1)$, Assumption \ref{asmp:covering} follows from Theorem 1.1 in \cite{gao2007entropy}.
Alternatively, let $\mX$ be a set of convex functions on $[0,1]^p$ that are uniformly bounded. 
From Theorem 3.1 in \cite{guntuboyina2012covering} with setting $\gamma=p/2$, Assumption \ref{asmp:covering} holds.
\end{example}

\begin{example}[Gaussian Process]
Let $X$ be a Gaussian process on $[0,1]^p$ with a positive even $p$, and assume its covariance function $\Cov(t,t'), t,t' \in [-,1]^d$ is $\Cov(t,t') k_\alpha(\|t-t'\|_2)$ where $k_\alpha$ is  Mat\'ern kernel function ((4.14) in \citet{williams2006gaussian}). 
In this case, with probability $1$, a path of $X$ is in a RKHS whose kernel is $k_{\alpha - p/2}$.
Then, if $\mX$ is a unit-ball of the RKHS in terms of an RKHS norm, we obtain that Assumption \ref{asmp:covering} holds with $\gamma = p/(\alpha - p/2)$.
For details, see Corollary 4.15 in \cite{kanagawa2018gaussian}.
\end{example}

We also present the following result to show the validity of Example \ref{ex:unbouded} on unbounded functions.
\begin{proposition} \label{prop:unbounded}
    Let $\mF$ be the set of functions with the form as in Example \ref{ex:unbouded} with fixed $J \in \N$ and locations $t_1,...,t_J \in [0,1]$.
    Then, there exists a constant $C^*$ such that the following inequality holds for any $\varepsilon \in (0,\bar{\varepsilon})$ with existing $\bar{\varepsilon}$:
\begin{align*}
    \log \mN(\varepsilon, \mW^\alpha, \|\cdot\|_{L^2}) \leq V' \varepsilon^{-1/\alpha},
\end{align*}
\end{proposition}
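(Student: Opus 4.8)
The plan is to decompose the function class into its smooth component and its finite-dimensional ``peak'' component, cover each separately, and recombine. Write $\mF = \mG + \mP$ (Minkowski sum), where $\mG$ is the bounded ball in the order-$\alpha$ Sobolev space from which $g$ is drawn and $\mP := \{ \sum_{j=1}^J a_j \psi_j : (a_1,\dots,a_J) \in [0,1]^J\}$, with the fixed profiles $\psi_j(x) := |x - t_j|^{-1/3}$ at the fixed locations $t_1,\dots,t_J$. Since $\|\cdot\|_{L^2}$ comes from a norm, $\mN(\varepsilon, A+B, \|\cdot\|_{L^2}) \le \mN(\varepsilon/2, A, \|\cdot\|_{L^2})\,\mN(\varepsilon/2, B, \|\cdot\|_{L^2})$, so
\[
    \log \mN(\varepsilon, \mF, \|\cdot\|_{L^2}) \le \log \mN(\varepsilon/2, \mG, \|\cdot\|_{L^2}) + \log \mN(\varepsilon/2, \mP, \|\cdot\|_{L^2}),
\]
and it suffices to bound the two terms on the right.

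For the first term I would invoke the classical metric-entropy estimate for $L^2$-Sobolev balls on a bounded interval (of Birman--Solomjak type; see also the discussion around Theorem 2.7.1 in \cite{van1996weak}): there is a constant $C_1 = C_1(\alpha, \mG)$ with $\log \mN(\delta, \mG, \|\cdot\|_{L^2}) \le C_1 \delta^{-1/\alpha}$ for all small $\delta > 0$. It matters here that the metric on both sides is $L^2$: routing through a Sobolev/H\"older embedding into the sup-norm would degrade the exponent, whereas the $L^2$-metric entropy of an $L^2$-Sobolev ball of order $\alpha$ on a one-dimensional index set is sharp at exponent $1/\alpha$. This is the one input I would single out as genuinely non-routine.

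For the second term, I would first note that each $\psi_j$ lies in $L^2([0,1])$, because $2/3 < 1$ makes $\int_0^1 |x - t_j|^{-2/3}\,dx$ finite; set $M := \max_{1\le j\le J}\|\psi_j\|_{L^2}$, a finite (indeed universal) constant. The map $(a_1,\dots,a_J)\mapsto \sum_{j=1}^J a_j\psi_j$ is linear and, by the triangle inequality, $MJ$-Lipschitz from $([0,1]^J,\|\cdot\|_\infty)$ into $(L^2,\|\cdot\|_{L^2})$, so covering the cube $[0,1]^J$ by $\|\cdot\|_\infty$-balls of radius $\delta/(MJ)$ yields $\log \mN(\delta,\mP,\|\cdot\|_{L^2}) \le J\log(1 + MJ/\delta)$ — merely logarithmic in $\delta$.

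Combining, $\log \mN(\varepsilon, \mF, \|\cdot\|_{L^2}) \le C_1(\varepsilon/2)^{-1/\alpha} + J\log(1 + 2MJ/\varepsilon)$; since $\log(1/\varepsilon) = o(\varepsilon^{-1/\alpha})$ as $\varepsilon\downarrow 0$, there is $\bar\varepsilon > 0$ on which the logarithmic term is absorbed into $C_1(\varepsilon/2)^{-1/\alpha}$, giving $\log\mN(\varepsilon,\mF,\|\cdot\|_{L^2}) \le 2^{1+1/\alpha}C_1\,\varepsilon^{-1/\alpha}$ for $\varepsilon \in (0,\bar\varepsilon)$, i.e.\ the claim with $V' := 2^{1+1/\alpha}C_1$. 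The only delicate step is the sharp $L^2$-entropy bound for the Sobolev ball; everything concerning the peaks is elementary, and it is precisely because they are measured in $L^2$ rather than the sup-norm that the unbounded profiles cause no difficulty — which is the point of Example \ref{ex:unbouded}.
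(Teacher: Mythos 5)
Your proposal is correct and follows essentially the same route as the paper's proof: cover the Sobolev ball via the classical $\varepsilon^{-1/\alpha}$ bound on its $L^2$-entropy, cover the amplitude cube $[0,1]^J$ with a finite-dimensional net whose entropy is only logarithmic in $\varepsilon$, transfer the latter to the peak component using $\|\psi_j\|_{L^2}<\infty$ and linearity in the $a_j$, and absorb the logarithmic term. The paper assembles the product net directly and bounds $\|f-\hat f\|_{L^2}$ term by term with Cauchy--Schwarz rather than invoking the Minkowski-sum covering inequality, but this is only a cosmetic difference.
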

\begin{proof}[Proof of Proposition \ref{prop:unbounded}]
Let $\mW^\alpha$ be a unit-ball in the Sobolev space on $[0,1]$ with an order $\alpha \in \N$.
By applying Theorem 4.3.26 in \citet{gine2016mathematical}, there exists an constant $V'$ such that the following inequality
\begin{align*}
    \log \mN(\varepsilon, \mF, \|\cdot\|_{L^2}) \leq C^* \varepsilon^{-1/\alpha},
\end{align*}
for every $\varepsilon >0 $.
Hence, we set $M_1 = M_1(\varepsilon) = \log \mN(\varepsilon, \mW^\alpha, \|\cdot\|_{L^2})$ and take a subset $\{g_m\}_{m=1}^{M_1} \subset \mW^\alpha$ as centers of the $\varepsilon$-balls to cover $\mW^\alpha$, that is, for any $g \in \mW^\alpha$, there exists $g' \in \{g_m\}_{m=1}^{M_1}$ such that $\|g - g'\|_{L^2} \leq \varepsilon$.

We also consider a set of location parameters $a_j \in [0,1]$ and a covering number of a parameter space for the locations.
Let $\mI = [0,1]^J$ be the space for $A = (a_1,...,a_J) \in \mI$.
We know that there exists a constant $C > 0$ such that
\begin{align*}
    \mN(\varepsilon, \mI, \|\cdot\|) \leq \mN(\varepsilon, [0,1], \|\cdot\|)^J \leq (C \varepsilon)^{-J}.
\end{align*}
Then, let $\{A_m\}_{m=1}^{M_2}$ be subsets of size $M_2 = M_2(\varepsilon)$ such that there are the centers of the $\varepsilon$-balls to cover $\mI$.

Fix a function $f$ which has the form in Example \ref{ex:unbouded} as
\begin{align}
    f(x; g, A) = g(x) + \sum_{j=1}^J \psi(x; a_j,t_j). \label{def:f_sm}
\end{align}
Note that the locations $t_1,...,t_J \in [0,1]$ are fixed.
By the definition of the subsets, we can find $g_m $ and $ A_{m'}$  from the subsets such that $\|g - g'\|_{L^2} \leq \varepsilon$, and $\|(a_1,...,a_J)^\top - A_{m'}\| \leq \varepsilon$ for each $\varepsilon$.
Then, we define
\begin{align*}
    \hat{f}(x) := g_m(x) + \sum_{j=1}^J \psi(x; a_{m',j}, t_j),
\end{align*}
where we write $A_{m'} = (a_{m',1},...,a_{m',J})^\top$.
We can bound the following difference as
\begin{align*}
    \|f - \hat{f}\|_{L^2} &\leq \|g - g_m\|_{L^2} +  \left\| \sum_{j=1}^J\psi(\cdot; a_j,t_j) - \sum_{j=1}^J \psi(\cdot; a_{m',j},t_j)\right\|_{L^2} \\
    &\leq \varepsilon +\sum_{j=1}^J \left\| \psi(\cdot; a_j,t_j) -  \psi(\cdot; a_{m',j},t_j)\right\|_{L^2}.
\end{align*}
About the norm in the last term, we can bound it as
\begin{align*}
    &\left\| \psi(\cdot; a_j,t_j) -  \psi(\cdot; a_{m',j},t_j)\right\|_{L^2}^2 \\
    & \leq  \int_0^1 \left( \frac{a_j}{|x-t_j|^{1/3}} - \frac{a_{m',j}}{|x-t_j|^{1/3}}  \right)^2 dx \\
    & = (a_j - a_{m',j})^2 \int_0^1 \left(  \frac{1}{|x-t_j|^{1/3}}  \right)^2 dx \\
    &=(a_j - a_{m',j})^2 3 ((1-t_j)^{1/3} + t_j^{1/3})\\
    & \leq 6 (a_j - a_{m',j})^2.
\end{align*}
Combining the results and the Cauchy-Schwartz inequality, we obtain
\begin{align*}
    \|f - \hat{f}\|_{L^2}&\leq \varepsilon + \sqrt{6}\sum_{j=1}^J |a_j - a_{m',j}|  \leq \varepsilon + \sqrt{6} \sqrt{J} \|A - A_{m'}\| \leq (1 + \sqrt{6J}) \varepsilon
\end{align*}
Hence, we find that the product set of $\{g_m\}_{m=1}^{M_1}$ and $\{A_m\}_{m=1}^{M_2}$ can construct a $(1 + \sqrt{6J}) \varepsilon$-covering set of a set of $f$ with the form \eqref{def:f_sm}.
Then, we bound the covering number of $\mF$ as
\begin{align*}
    \log \mN((1 + \sqrt{6J}) \varepsilon, \mF, \|\cdot\|_{L^2}) &\leq \log \mN(\varepsilon, \mF, \|\cdot\|_{L^2}) + \log \mN(\varepsilon, \mI, \|\cdot\|) \\
    &\leq  V' \varepsilon^{-1/\alpha} + J \log (C\varepsilon^{-1}).
\end{align*}
We update $\varepsilon$ as $\varepsilon \leftarrow (1 + \sqrt{6J}) \varepsilon$ and achieve $C^*$ such that we can ignore the term with the order of $\log(1/\varepsilon)$, then obtain the statement.
\end{proof}

\section{Note on Assumption 2}

Several distributions are known to satisfy Assumption \ref{asmp:smallball}.
We develop the following simple example:
\begin{example}[Uniformly distributed Fourier coefficients]
    We consider a distribution $\Pi$ of a function $h$ on $[0,1]$ whose Fourier coefficients by a basis are uniformly distributed.
    Let $\{\varphi_j: [0,1] \to \R \}_{j = 1,2,...,\infty}$ be a trigonometric basis as an orthonormal basis (see Example 1.3 in \cite{tsybakov2008introduction}).
    We set $\Pi$ as a measure of $h$ which has a form
    \begin{align*}
        h(\cdot) = \sum_{j=1}^\infty \theta_j \varphi_j(\cdot),
    \end{align*}
    where $\theta_j$ is a random Fourier coefficient which independently follows a uniform distribution on $[-1/j, 1/j]$.
    Note that Parseval's equality yields $\|h\|_2^2 = \sum_{j=1}^\infty \theta_j^2 \leq \sum_{j=1}^\infty 1/j^2 = \pi^2/6$ almost surely, hence the support of $\Pi$ is in the $L^2$ space.
    Furthermore, $h$ belongs to  the Sobolev space since the coefficients $\{\theta_j\}_{j=1}^\infty$ are in the Sovolev ellipsoid (for details, see Section 1.7.1 in \cite{tsybakov2008introduction}), the support of $\Pi$ satisfies Assumption \ref{asmp:covering}.

    We show that $\Pi$ satisfies Assumption \ref{asmp:smallball}.
    Without loss of generality, we consider a ball $B(0,\delta)$ whose center is $0$ with fixed $\delta > 0$.
    We define $C_{1.5} := \sum_{j=1}^{\infty} 1/j^{1.5} \approx 2.61238$.
    We study the measure as
    \begin{align*}
        \Pi (h \in B(0,\delta)) &= \Pi(\|h\|^2 \leq \delta^2) \\
        &= \Pi \left(\sum_{j=1}^\infty \theta_j^2 \leq \delta^2 \right) \\
        &= \Pi \left(\sum_{j=1}^\infty \theta_j^2 \leq \frac{\delta^2}{C_{1.5}} \sum_{j=1}^\infty  j^{- 1.5}\right) \\
        & \geq \prod_{j=1}^\infty \Pi \left(\theta_j^2 \leq \frac{\delta^2}{C_{1.5} j^{1.5}}   \right) \\
        & = \prod_{j=1}^J \Pi \left(\theta_j^2 \leq \frac{\delta^2}{C_{1.5} j^{1.5}}   \right),
    \end{align*}
    where $J = \max\{j \in \N \mid 1/j^2 \geq \delta^2/ (C_{1.5} j^{1.5})\}$.
    The first inequality follows the independent property of $\theta_j$, and the last equality follows that $\Pi (\theta_j^2 \leq \frac{ \delta^2}{C_{1.5} j^{1.5}}   ) = 1$ for $j \geq J + 1$.
    For $j \leq J$, $\Pi (\theta_j^2 \leq \frac{\delta^2}{C_{1.5} j^{1.5}} )$ is positive since $ \theta_j$ follows the uniform distribution, we obtain that $\Pi (h \in B(0,\delta)) > 0$.
    \qed
\end{example}

Another example is the truncated Gaussian as described below.
\begin{example}[Small shifted ball probability with truncated Gaussian processes]\label{example:shift}
Let $h$ be a Borel measurable centered Gaussian random element in a separable Hilbert space $(\mathbb{H},\|\cdot\|_{\mathbb{H}})$. 
From \cite{kuelbs1994gaussian}, for any $x \in \mathbb{H},\, \varepsilon>0,\,0\leq \alpha\leq 1$, it holds that
\begin{align}\label{ineq:smball}
    \Pi(h: \|h-x\|_{\mathbb{H}}\leq \varepsilon)\geq \exp\left\{-\inf_{x_0:\in\mathbb H :\|x_0-x\|\leq \alpha\varepsilon}\frac{\|x_0\|_{\mathbb{H}}^2}{2}+\log \Pi(\|h\|_{\mathbb{H}}<(1-\alpha)\varepsilon)\right\}.
\end{align}
To satisfy Assumption \ref{asmp:covering}, we consider a probability measure of a truncated version of a Gaussian measure.  Given a constant $c > 0$ as a truncation level, we define a ball $\mathbb{H}_c := \{h \in \mathbb{H} \mid \|h\|_{\mathbb{H}} \leq c\}$ such that $\Bar{C} := \Pi(\mathbb{H}_c) > 0$. 
We, then, consider a measure $\Pi_c$ associated with the truncated Gaussian process, such that $H \in \sigma(\mathbb{H}_c)$ satisfies $\Pi_c(H) := \Pi(H \mid \mathbb{H}_c) =  \Pi(H) / \Bar{C}$.
Using the inequality (\ref{ineq:smball}), for any $x \in \mathbb{H}_c$, it holds that
\begin{align*}
    &\Pi_c(h : \|h-x\|_{\mathbb{H}}\leq \varepsilon) \\
    &\geq \exp\left\{-\inf_{x_0:\in\mathbb H :\|x_0-x\|\leq \alpha\varepsilon}\frac{\|x_0\|_{\mathbb{H}}^2}{2}+\log \Pi(\|h\|_{\mathbb{H}}<(1-\alpha)\varepsilon)\right\} \Bar{C}^{-1},
\end{align*}
for any $\alpha \in (0,1)$.
Hence, by setting $\mathbb{H}_c=L^2,\ \alpha=\frac{1}{2}$ and $\varepsilon=\frac{\delta}{2}$, we obtain 
\begin{align*}
    \Pi_c \left(B\left(x;\frac{\delta}{2}\right)\right) &=\Pi\left(h : \|h-x\|_{L^2}\leq \frac{\delta}{2}\right)\Bar{C}^{-1}\\
    &\geq \exp\left\{-\inf_{x_0:\in L^2:\|x_0-x\|\leq \delta/4}\frac{\|x_0\|_{L^2}^2}{2}+\log \Pi\left(\|h\|_{L^2}<\frac{\delta}{4}\right) \right\}\Bar{C}^{-1}\\
    &\geq \Pi\left(h: \|h\|_{L^2}<\frac{\delta}{4}\right) \exp\left(-\frac{\|x\|_{L^2}^2}{2}\right)\Bar{C}^{-1} \\
    & \geq \Pi\left(h: \|h\|_{L^2}<\frac{\delta}{4}\right) \exp\left(-\frac{c^2}{2}\right)\Bar{C}^{-1},
\end{align*}
for any $x \in \mathbb{H}_c$.
Since $h$ is a centered Gaussian, a ball near $0$ with positive radius has positive measure \citep{gao2004exact}. Then $\Pi (B(x;{\delta} / {2}))>0$ holds.
\qed
\end{example}

\section{Proof of the Delaigle--Hall and hard-margin Condition} \label{app:DH-and-HM}

We start with the proof for connecting the Delaigle--Hall condition and the hard-margin condition, which is one of the key points of this study.

\begin{proof}[Proof of Proposition \ref{prop:dhhm}]
We will develop an explicit classifier based on the Delaigle--Hall condition, then show that the classifier has a positive margin.
Without loss of generality, we set $\mu_- = 0$, hence $\mu_{-,j} = 0$ holds for all $j \in \N$.
Hence, we have $\psi_M:=\sum_{j=1}^M \theta_j^{-1}\mu_{+,j}\phi_j$, and 
    $f_M^*(x) = (\langle x,\psi_M \rangle-\langle \mu_+,\psi_M \rangle)^2-\langle x,\psi_M \rangle^2$ for $x \in \mX$ and $M \in \N$.
For $X$ generated from $P_-$, $f_M^*(X)$ is written as
\begin{align*}
    f_M^*(X)=\langle \mu_+,\psi_M \rangle^2- 2\langle \mu_+,\psi_M \rangle \alpha_- Z_-,
\end{align*}
where the random variable $Z_- = \langle X,\psi_M\rangle /\alpha_-$ and $\alpha_-^2=\sum_{j=1}^{\infty}\theta_{-,j}\langle \psi_M, \phi_{-,j} \rangle^2$. 
Here, $E[Z_-]=0$ and $E[Z_-^2]=1$ hold.
Similarly, for $X$ generated from $P_+$,  we obtain
\begin{align*}
    f_M^*(X) &=-\langle \mu_+,\psi_M \rangle^2 - 2\langle x-\mu_+,\psi_M \rangle \langle \mu_+ ,\psi_ M\rangle =-\langle \mu_+,\psi_M \rangle^2 - 2\langle \mu_+,\psi_M \rangle \alpha_+ Z_+ ,
\end{align*}
where $Z_+ = \langle X-\mu_+,\psi_M\rangle /\alpha_+$ 
and $\alpha_+^2=\sum_{j=1}^{\infty}\theta_{+,j}\langle \psi_M, \phi_{+,j} \rangle^2$. 
Here, $Z_+$ satisfies $E[Z_+]=0$ and $E[Z_+^2]=1$.

Now, we evaluate the margin of the classifier $f^*_M$ with the measure $\Pi$. For any $\delta>0$, we bound it as
\begin{align*}
    &\Pi(\{ x:|\, \|x-\mu_+\|^2-\|x\|^2| \leq \delta \})\\ &=\lim_{M\rightarrow\infty} \Pi(\{ x:| f_M^*(x)|\leq \delta \})\\
    &= \lim_{M\rightarrow\infty} w P_-(|f_M^*(X)|\leq\delta)+(1-w)P_+(|f_M^*(X)|\leq \delta) \\
    &\leq  \lim_{M\rightarrow\infty} w P_-(f_M^*(X)\leq \delta)+(1-w)P_+(f_M^*(X)\geq-\delta) \\
    &= \lim_{M\rightarrow\infty} w P_-( \langle \mu_+,\psi_M \rangle^2- 2\langle \mu_+,\psi_M \rangle \alpha_- Z_- \leq \delta) \\
    &\,\,\,\,\,\,+(1-w)P_+( -\langle \mu_+,\psi_M \rangle^2 - 2\langle \mu_+,\psi_M \rangle \alpha_+ Z_+\geq -\delta) \\
    &= \lim_{M\rightarrow\infty} w P_-\left( Z_-\geq \frac{\langle \mu_+ ,\psi_M \rangle^2-\delta}{2\alpha_-\langle \mu_+,\psi_M \rangle}\right) +(1-w)P_+\left(-Z_+\geq\frac{\langle \mu_+,\psi_M \rangle^2-\delta}{2\alpha_+\langle \mu_+,\psi_M \rangle}\right) \\
    &\leq \lim_{M\rightarrow\infty} \frac{ \{4w\alpha_-^2+4(1-w)\alpha_+^2 \} \langle \mu_+,\psi_M \rangle^2 }{(\langle \mu_+,\psi_M \rangle^2-\delta)^2}\ \ \ \ (\because \text{Chebyshev's inequality})\\
    &=0.
\end{align*}
The last equality holds because of the following relation: for $\ell\in \{-,+\}$, we obtain 
\begin{align*}
    \lim_{M\rightarrow\infty} \frac{\langle \mu_+,\psi_M \rangle^2}{\alpha_{\ell}^2}&=\lim_{M\rightarrow\infty}\frac{(\sum_{j=1}^M\theta_{ j}^{-1}\mu_{j}^2)^2}{\sum_{j=1}^{\infty}\theta_{\ell, j} \langle \psi_M, \phi_{\ell, j} \rangle^2}\\
    &=\lim_{M\rightarrow\infty}\frac{(\sum_{j=1}^M\theta_j^{-1}\mu_{j}^2)^2}{\sum_{j=1}^{\infty}\theta_{\ell, j} (\sum_{i=1}^M \theta_i^{-1}\mu_i \langle \phi_i,\phi_{\ell, j} \rangle)^2}\\
    &=\infty,
\end{align*}
by the Delaigle--Hall condition.
\end{proof}

\section{Proof of Convergence Analysis} \label{app:convergence}

\subsection{Additional Notation}

For a function $f: \mX\times \{-1,1\} \to \R$, we employ the notation $(\ell \circ  f)(x,y)=\ell(yf(x))$.
Also, for $g = \ell \circ  f$, its expectation and empirical mean with respect to $P$ is written as $Pg =\mathbb{E}_{(X,Y) \sim P}[g(X,Y)]$ and $P_n f=\frac{1}{n}\sum_{i=1}^n g(X_i,Y_i)$ with the observed data $\{(X_i,Y_i):i=1,...,n\}$. 

We define an open ball $B(x; \delta') \subset \mX$ of radius $\delta'$ centered at $x \in \mX$ with metric $\|\cdot\|$. 
We also define a set $\mH(x,\delta') \subset \mH$ which is a set of a map $h \in \mH$ satisfying the following three conditions: 
\begin{align}
    &(i)\, \forall x'\in \mX\,\,0\leq h(x')\leq 2\delta, ~ (ii)\,h\geq \delta' \mathrm{~on~}B\left(x;\frac{\delta'}{2}\right), \mbox{~and~} \notag \\
    &(iii) \int_{B(x; \delta')^c}hd\Pi\leq \delta'\int_\mX h d\Pi, \label{cond:rkhs}
\end{align}
where $B(x; \delta')^c := \mX \backslash B(x; \delta')$.
It is obvious to show $\mH(x,\delta')\neq \emptyset$, since there exists a continuous $f$ such that $0\leq f \leq \frac{3}{2}\delta'$ on $B(x,\delta'/2)$ and  $f=0$ on $B(x,\delta')^c$ holds, and $\mH$ is dense in $C(\mX)$.

We define $q(x,\delta')=\inf_{h\in \mH (x,\delta')}\|h\|_{\mH}$ and $\Bar{q}(\delta')$ as its decreasing envelope such that $\bar{q}(\delta') \geq \sup_{x \in \mX} q(x,\delta') $ holds.
We also define $p(x,\delta') =(\delta')^2 \Pi (B(x;{\delta'} / {2}))$ and define its lower envelope function $\Bar{p}$ as $p(x,\delta')\geq \Bar{p}(\delta')>0$ for all $x$ such that $|\tilde{f}^*(x)|\geq 1$ holds.
This definition is related to the small shifted ball probability and it varies with the setting of $\Pi$ and $\mX$. Remark that the existence of a lower envelope is guaranteed by Assumption \ref{asmp:smallball}.
Further, on the set $\{x:|\tilde{f}^*(x)|\geq 1\}$, we consider a positive function $r: \R_+ \to \R_+$ such that $r(\delta')\geq \Bar{p}(\delta')/\Bar{q}(\delta') > 0$ holds.

\subsection{Full Proof}

\begin{proof}[Proof of Theorem \ref{thm:convergence}]

This proof contains three steps: (i) a basis decomposition on the generalization error, (ii) bound a misclassification error with the bounded condition, and (iii) bound an unbounded probability.
In the following, each step is described in one subsection.

\textbf{(i) Basic Decomposition}:
We start with a basic decomposition for the generalization error for the classification.
To fit the situation with the Delaigle--Hall condition, we extend its formulation.
In the following, $\Pr(\cdot)$ and $\Ep[\cdot]$ denote a probability and an expectation with respect to the observed data from $P^{\otimes n}$.

\begin{lemma} \label{lem:decomp_generror}
    Suppose the Delaigle--Hall condition holds.
    Then, the following equation holds:
    \begin{align*}
        \Ep[R(\hat{f}_n) - R(\tilde{f}^*)] \leq \int |\eta(x)| \, \Pr(\hat{f}_n(x) \tilde{f}^*(x)\leq 0) \,d\Pi(x).
    \end{align*}
\end{lemma}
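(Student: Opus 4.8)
The plan is to reduce the excess risk $\Ep[R(\hat f_n)-R(\tilde f^*)]$ to the pointwise sign-disagreement probability between $\hat f_n$ and $\tilde f^*$, by combining the classical conditional decomposition of the misclassification error with the fact that, under the Delaigle--Hall condition, $\tilde f^*$ achieves perfect classification. Writing $\eta(x)=\Ep[Y\mid X=x]$, so that $\Pr(Y=1\mid X=x)=(1+\eta(x))/2$, a direct computation of the conditional misclassification probability of $\sgn\circ f$ gives, for every measurable $f:\mX\to\R$,
\begin{align*}
 R(f)=\int\Bigl(\tfrac{1-\eta(x)}{2}\mone\{f(x)>0\}+\tfrac{1+\eta(x)}{2}\mone\{f(x)<0\}+\mone\{f(x)=0\}\Bigr)\,d\Pi(x).
\end{align*}

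Next I would invoke the Delaigle--Hall condition. By Proposition~\ref{prop:DH} (and its non-Gaussian counterpart, Theorem~2 of \cite{delaigle2012achieving}), the condition implies $\inf_f R(f)=0$, and since the limiting centroid-type function $\tilde f^*$ is exactly the Delaigle--Hall classifier it attains this, that is, $R(\tilde f^*)=0$. Substituting $f=\tilde f^*$ into the display above and using nonnegativity of each summand, $R(\tilde f^*)=0$ forces $\Pi(\tilde f^*(X)=0)=0$ together with $|\eta(x)|=1$ and $\sgn(\tilde f^*(x))=\sgn(\eta(x))$ for $\Pi$-almost every $x$.

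The bound then follows from a short case split on the signs of $\hat f_n(x)$ and $\tilde f^*(x)$. Since $R(\tilde f^*)=0$, it suffices to bound $R(\hat f_n)$. On $\{x:\sgn(\hat f_n(x))=\sgn(\tilde f^*(x))\}$ one has $\sgn(\hat f_n(x))=\sgn(\eta(x))$ with $|\eta(x)|=1$, so the integrand in the displayed formula for $R(\hat f_n)$ vanishes $\Pi$-a.e.; on the complement the integrand is at most $1=|\eta(x)|$ (the one subcase that genuinely uses $|\eta(x)|=1$ is $\hat f_n(x)=0$), and $\sgn(\hat f_n(x))\neq\sgn(\tilde f^*(x))$ implies $\hat f_n(x)\tilde f^*(x)\le 0$. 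Hence
\begin{align*}
 R(\hat f_n)-R(\tilde f^*)=R(\hat f_n)\le\int|\eta(x)|\,\mone\{\hat f_n(x)\tilde f^*(x)\le 0\}\,d\Pi(x),
\end{align*}
and taking the expectation over the sample and interchanging it with the $\Pi$-integral yields the stated inequality with $\Pr(\hat f_n(x)\tilde f^*(x)\le 0)$ in place of the indicator; the interchange is legitimate because the integrand $(x,\omega)\mapsto|\eta(x)|\mone\{\hat f_n(x)\tilde f^*(x)\le 0\}$ is nonnegative and jointly measurable (a measurable version of the minimizer $\hat f_n$ may be chosen, and $x\mapsto\hat f_n(x)$ is continuous by \eqref{ineq:prop_rkhs}), so Tonelli's theorem, equivalently Bochner integrability of the associated $L^1(P^{\otimes n})$-valued map, applies.

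\textbf{Main difficulty.} The only delicate step is the assertion $R(\tilde f^*)=0$: one must know that the pointwise limit $\tilde f^*=\lim_{M\to\infty}\sgn(f^*_M)$ is well defined $\Pi$-a.e.\ and inherits perfect classification from the Delaigle--Hall condition. This rests on the same mechanism underlying Proposition~\ref{prop:dhhm}, namely that the condition makes the $\psi_M$-weighted distances to the two centroids diverge and separate, so that $\sgn(f^*_M(X))$ stabilizes almost surely and the supports of $P_+$ and $P_-$ become disjoint; I would quote \cite{delaigle2012achieving} for this rather than reprove it, and the remainder of the argument is elementary bookkeeping.
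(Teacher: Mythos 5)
Your proof is correct and follows essentially the same route as the paper: decompose the excess risk via the conditional risk at each $x$, bound the pointwise excess by $|\eta(x)|$ on the sign-disagreement set, note that disagreement implies $\hat f_n(x)\tilde f^*(x)\le 0$, and swap the expectation with the $\Pi$-integral by Tonelli. The only difference is cosmetic: you explicitly derive $|\eta|=1$ $\Pi$-a.e.\ from $R(\tilde f^*)=0$ to handle the $\sgn(\hat f_n(x))=0$ subcase, whereas the paper asserts the bound by $|\eta(x)|$ directly (which implicitly relies on the same fact), so your version is if anything slightly more careful.
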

\begin{proof}[Proof of Lemma \ref{lem:decomp_generror}]
We transform the generalization error for any $f \in \mH$ as 
\begin{eqnarray*}
    R(f)-R(\tilde{f}^*)&=&\mathrm{E}_X[\mathrm{E}_{Y}[ I_{\{Y\neq \sgn(f(X))\}}-I_{\{Y\neq \sgn(\tilde{f}^*(X))\}} \, |\,X]]\\
    &=&\mathrm{E}_X[ \{I_{\{1\neq \sgn(f(X))\}}-I_{\{1\neq \sgn(\tilde{f}^*(X))\}}\}\cdot \Pr(Y=1|X) \\ &\,&\,\hspace{20pt} +\{I_{\{-1\neq \sgn(f(X))\}}-I_{\{-1\neq \sgn(f^*(X))\}}\}\cdot\Pr(Y=-1|X)\,]\\
    &\leq&\mathrm{E}_X[ I_{\{\sgn(\tilde{f}^*(X))\neq \sgn(f(X))\}}\,|\eta(X)| \,]\\
    &=&\int_{\{x \in \mX: \sgn(\tilde{f}^*(x))\neq \sgn(f(x))\}}\,  |\eta(x)|\, d\Pi(x).
\end{eqnarray*}
We consider its expectation with $\hat{f}_n$ and develop its upper bound as
\begin{eqnarray*}
    \Ep[R(\hat{f}_n)-R(\tilde{f}^*)]&=&\Ep\left[\int_{\{x \in \mX : \sgn(\tilde{f}^*(x))\neq \sgn(\hat{f}_n(x))\}} |\eta(x)| d\Pi(x)\,\right]\\
    &=& \Ep\left[\int_{\{x \in \mX: \hat{f}_n(x) \tilde{f}^*(x)\leq 0 \}} |\eta(x)| d\Pi(x)\right]\\
    &=&\int |\eta(x)| \, \Ep[\,I_{\{ x \in \mX: \hat{f}_n(x) \tilde{f}^*(x)\leq 0\}}\,]\, d\Pi(x)\\
    &=&\int |\eta(x)| \, \Pr(\hat{f}_n(x)\tilde{f}^*(x)\leq 0) \,d\Pi(x).
\end{eqnarray*}
Then, we obtain the statement.
\end{proof}

Our next goal is to study the probability $\Pr (\hat{f}_n(x) \tilde{f}^*(x)\leq 0)$ in Lemma \ref{lem:decomp_generror} for a given $x \in \mX$.
For any $x \in \mX$ such that $\tilde{f}^*(x)>0$ holds, with the threshold $U$, we obtain
\begin{align}
    \Pr(\hat{f}_n(x) \tilde{f}^*(x)\leq 0)&=\Pr(\hat{f}_n(x)\leq 0) \notag \\
    &\leq \underbrace{\Pr(\hat{f}_n(x)\leq 0\,,\,\|\hat{f}_n\|_\mH\leq U)}_{= T_1}+ \underbrace{\Pr(\|\hat{f}_n\|_\mH>U)}_{= T_2} \label{ineq:basic_decomp}.
\end{align}
If $\tilde{f}^*(x) < 0$ holds, we obtain the similar bound.
We will bound the terms $T_1$ and $T_2$, respectively.

\textbf{(ii-1) Bound $T_1$ via hard-margin Condition}:
As preparation, we fix $x$ such that $\tilde{f}^*(x) \geq \delta = 1$ holds, which follows from $\mathrm{ess}\inf_{x \in \mX}|\tilde{f}^*(x)| \geq \delta$ for any $\delta$ by Proposition \ref{prop:dhhm}.
Also, we fix $\delta_0 > 0$ then pick $h \in \mH(x,\delta_0)$ as \eqref{cond:rkhs}.
We rewrite the empirical loss in \eqref{def:erm} as
\begin{align*}
    L_n(f) = \frac{1}{n}\sum_{i=1}^n \ell(Y_i f(X_i)) + \lambda \|f\|_\mH^2.
\end{align*}
By Lemma \ref{lem:func_derive}, we obtain its functional derivative in terms of $f$ at $\hat{f}_n$ with direction $h$ as
    $\nabla L_n(\hat{f}_n) = \frac{1}{n}\sum_{i=1}^n\ell'(Y_i\hat{f}_n(X_i))Y_ih(X_i)+2\lambda\langle\hat{f}_n,h\rangle_\mH$.
By the optimal condition of $\hat{f}_n$, we have $\nabla L_n(\hat{f}_n) = 0$.

We bound the term $T_1$ by combining a probability of the event with $\nabla L_n(\hat{f}_n)$.
Let $\mU$ be an event $\{ \hat{f}_n(x)\leq 0, \|\hat{f}_n\|_\mH \leq U\}$.
We simply obtain
\begin{align*}
    T_1 &= \Pr(\mU, \nabla L_n(\hat{f}_n) = 0 )\\
    &= \Pr(\nabla L_n(\hat{f}_n) = 0 ~|~ \mU ) \Pr (\mU )  \\
    & = \{ 1 - \Pr(  \nabla L_n(\hat{f}_n) \neq 0 ~|~ \mU )\} \Pr (\mU ) \\
    & \leq \{1 - \Pr(  \nabla L_n(\hat{f}_n) < 0 ~|~ \mU) \} \Pr(\mU)\\
    & \leq 1 - P_{L},
\end{align*}
where we define $P_L = \Pr(  \nabla L_n(\hat{f}_n) < 0 ~|~ \mU)$.
The first line follows the fact $\Pr(\nabla L_n(\hat{f}_n) = 0) = 1$.
To bound $T_1$, we will study $P_{L}$.

We consider an event $\mU$, and study the derivative $\nabla L_n(\hat{f}_n)$.
We define $ \nabla \hat{L} = \frac{1}{n}\sum_{i=1}^n\ell'(Y_i\hat{f}_n(X_i))Y_ih(X_i)$ as a derivative of the loss function part from $\nabla L_n(\hat{f}_n)$.
By Lemma \ref{lem:bound_xi} associated with Lemma \ref{lem:bern_bound}, we can bound tail probability of $\hat{L}$ as
\begin{align*}
    \Pr \left( \nabla \hat{L}<-\frac{1}{2}\delta_0\Ep[h(X)] ~|~ \mU \right) & \geq  1 - 2\exp\left(-\frac{n\delta_0\Ep [h(X)]}{C_{L,U}}\right) \\
    &\geq 1-2\exp\left(-\frac{np(x,\delta_0)}{C_{L,U}}\right),
\end{align*}
which follows the relation $\delta_0\Ep [h(X)]\geq \delta_0^2\Pi(B(x;\delta_0/2))=p(x,\delta_0)$.
By this result, we can also bound $\nabla L_n(\hat{f}_n)$ as
\begin{align*}
   \nabla L_n(\hat{f}_n) &= \nabla \hat{L} +  2\lambda\langle\g,h\rangle \\
   &\leq -\delta_0\Ep[h(X)]/2 +2\lambda U\|h\|_\mH\\
   &\leq -p(x,\delta_0)/2 +2\lambda Uq(x,\delta_0),
\end{align*}
with probability at least $1-2\exp(-np(x,\delta_0)/{C_{L,U}})$.
The first inequality follows the Cauchy-Schwartz inequality and $\|\g\|_\mH \leq U$. 
Since we set $\lambda <\frac{p(x,\delta_0)}{4Uq(x,\delta_0)}=\frac{r(x,\delta_0)}{4U}$, we obtain $\nabla L_n(\hat{f}_n) < 0$ with the probability.
Thus, we have 
\begin{align}
      T_1 &\leq 1- P_{L} \leq 1 - \left\{ 1-2\exp\left(-\frac{np(x,\delta_0)}{C_{L,U}}\right)\right\} \notag \\
      &\leq 2\exp\left(-\frac{np(x,\delta_0)}{C_{L,U}}\right) \leq 2\exp\left(-\frac{n\bar{p}(\delta_0)}{C_{L,U}}\right). \label{ineq:T1}
\end{align}

\textbf{(ii-2) Bound $T_2$ via Metric Entropy of Functional Data Space}:
We bound $T_2$ in \eqref{ineq:basic_decomp} by using the \textit{peeling} technique (for introduction, see Chapter 7 in \cite{steinwart2008support}).

As preparation, we derive an upper bound of $\|\g\|_\mH$.
Since 
\begin{align*}
    \lambda \|\hat{f}_n\|_\mH^2 \leq P_n(\ell\circ \hat{f}_n)+\lambda \|\hat{f}_n\|_\mH^2\leq \ell(0) + \|0\|_\mH^2 \leq 1,
\end{align*}
where the second inequality is obtained by replacing $\g$ by $0$ and the optimality condition of $\g$, and the last inequality follows the bounded condition on the loss function, we obtain $\bar{R} = \lambda^{-1/2}\ell(0)^{-1/2}$ as an upper bound of $\|\hat{f}_n\|_\mH$.

We decompose the term $T_2$.
We remind the definition $f^\dagger = \argmin_{f \in \mH} R(f)$, and consider a constant $R = \|f^\dagger\|_\mH$ which is assumed to be no less than $1$ without loss of generality.
We also define events $\mA(R)$ and $\mE(R)$ as
\begin{align*}
    \mA(R) = \left\{ \frac{R}{2} \leq  \|\g\|_\mH \leq R \right\}, \mbox{~and~} \mE(R) = \left\{ \|\g\|_\mH \leq \frac{R}{2} \right\},
\end{align*}
and a sequence $R_k = 2^k, k = 1,2,...,N$ where $N=\log_2\bar R+1$.
For each $\lambda >\underline{\lambda}$ and sufficiently large $n$, we have 
\[ 
N= \log_2\bar R+1 =\frac{1}{2\log2}\log\frac{\ell(0)}{\lambda} +1 \leq \frac{1}{2\log2}\log\frac{n}{C_{V,\gamma} \log \log n} +1 \leq C_{V, \gamma}'\log n,
\]
where  $C_{V, \gamma}'$ is a constant depending on $C_{V, \gamma}$.
We remark that $\cup_{k=1}^N \mA(R_k) \supset \{U \leq \|\g\|_\mH\}$ since $\|\g\|_\mH \leq\bar{R}$ holds.
Since $\mA(R_k),k=1,...,N$ are disjoint up to null sets, we obtain 
\begin{align}
    T_2 & \leq \sum_{k=1}^N \Pr (U \leq \|\g\|_\mH| \mA(R_k)) \Pr (\mA(R_k)) \leq \sum_{k=1}^N \Pr (U \leq \|\g\|_\mH| \mA(R_k)). \label{ineq:T2_1}
\end{align}
Now, we will bound the probability $\Pr (U \leq \|\g\|_\mH| \mA(R_k))$ in the following.

We investigate the event $\mE(R)$ with conditional on $\mA(R)$ and study the event $U \leq \|\g\|_\mH$.
We set a constant $c_{V,\gamma} =  2\sqrt{R}(\sqrt{6}+\frac{1}{V3^{\gamma}})\exp(V3^{\gamma})$.
An inequality $P_n(\ell\circ \hat{f}_n)-\inf_{g \in \mH:\|g\|_\mH\leq R}P_n(\ell\circ g) \geq 0$ and a uniform bound defined by
\begin{align}
    \Delta(n,\gamma,t,R) = {Rc_{V,\gamma}}{ (\log n)^{-1/\gamma} } + \sqrt{2t/n} \label{def:uniform_bound},
\end{align}
and Lemma \ref{lem:erm_ineq} implies
\begin{align*}
    \lambda\|\hat{f}_n\|_\mH^2&\leq P_n(\ell\circ \hat{f}_n)-\inf_{g \in \mH:\|g\|_\mH\leq R}P_n(\ell\circ g)+\lambda\|\hat{f}_n\|_\mH^2\\
    &=\inf_{f \in \mH : \|f\|_\mH\leq R}\left\{ P_n(\ell\circ f)-\inf_{g \in \mH : \|g\|_\mH\leq R}P_n(\ell\circ g)+\lambda\|f\|_\mH^2 \right\}\\
    &\leq \inf_{f \in \mH : \|f\|_\mH\leq R} \left\{ P(\ell\circ f)-\inf_{g \in \mH : \|g\|_\mH\leq R}P(\ell\circ g) +\lambda\|f\|_\mH^2  \right\} + 2 \Delta(n,\gamma,t,R)\\
    &\leq \lambda\|f^\dagger\|_\mH^2 + 2 \Delta(n,\gamma,t,R),
\end{align*}
with probability at least $1-\exp(-t)$ for any $t > 0$.
The last inequality holds by substituting $f^\dagger$.
Combining an inequality $R/2<\|\hat{f}_n\|_\mH $ with this result yields
\begin{align*}
    {R^2}/{4} \leq  \|\g\|_\mH^2 \leq  \|f^\dagger\|_\mH^2+2\Delta(n,\gamma,t,R) / \lambda.
\end{align*}
Solving this inequality with respect to $R$ yields that 
\begin{align*}
    R&\leq {4c_{V,\gamma}}{\lambda^{-1} (\log n)^{-1/{\gamma}}} +\sqrt{ \left({4c_{V,\gamma}}{\lambda^{-1} (\log n)^{-1/{\gamma}}}\right)^2+4\|f^\dagger\|_\mH^2  +{8}{\lambda^{-1}}\sqrt{2t/{n}}        }\\
    &\leq {8c_{V,\gamma}}{\lambda^{-1} (\log n)^{-1/{\gamma}}} +2\|f^\dagger\|_\mH +{2\,(2t)^{1/4}}{\lambda^{-1/2}n^{-1/4} } \\
    &\leq C_{V,\gamma}(\|f^\dagger\|_\mH  \lor {\lambda^{-1} (\log n)^{{-1/\gamma}}} \,\lor \,{t^{1/4}}{\lambda^{-1/2}n^{-1/4} } ),
\end{align*}
where $C_{V,\gamma}$ is a constant depending on $c_{V,\gamma}$.
By setting $t=n\zeta^2$ and with sufficiently small $\zeta > 0$ which will be specified later, we obtain $R\leq C_{V,\gamma}\|f^\dagger\|_\mH  = U$ holds.
Consequently, conditional on $\mA(R)$, the event $\mE(R)$ implies $R \leq U$ with probability at least $1-\exp(-n \zeta^2)$, which contradicts the setting of $R \geq U$.
Hence, for any measurable event $\Omega$, it holds that
\begin{align*}
    \Pr(\Omega \mid \mA(R)) \leq \Pr(\mE(R)^c \mid \mA(R)) \leq 1-(1-\exp(-n \zeta^2)) = \exp(-n \zeta^2).
\end{align*}

We put this inequality with setting $\Omega = \{U \leq \|\g\|_\mH\}$ into \eqref{ineq:T2_1}. Then we obtain
\begin{align}
    T_2 &\leq  \sum_{k=1}^N \Pr(\mE(R_k)^c \mid \mA(R_k)) \leq N \exp(-n \zeta^2)\leq e^{-n\zeta}C_{V,\gamma}'\log n \notag  \\
    & \leq  e^{-n\zeta}C_{V,\gamma}'\exp({n\varepsilon}/{C_{V,\gamma}}) \leq C_{V,\gamma}'\exp\{-n\zeta(1-C_{V,\gamma}^{-1})\} \leq \exp({-n\zeta/2}), \label{ineq:bound_t2_1}
\end{align}
The last third inequality follows the setting of $\zeta$ as following $\zeta \geq C_{V,\gamma}' \geq  C_{V,\gamma}'\frac{\log\log n}{n}$.

\textbf{(iii) Combining Results}:
We utilize the derived bounds for $T_1$ in \eqref{ineq:T1} and for $T_2$ in \eqref{ineq:bound_t2_1} into \eqref{ineq:basic_decomp}, then obtain the following inequality:
\begin{align*}
    \Pr (\hat{f}_n(x) \tilde{f}^*(x)\leq 0) \leq 2\exp\left(-\frac{n\bar{p}(\delta_0)}{C_{L,U}}\right) + \exp\left(-\frac{n \zeta}{2}\right) \leq \exp(-\beta n),
\end{align*}
by selecting $\beta$ depending on $\bar{p}(\delta_0), C_{L,U}$ and $\zeta$.
Finally, we obtain $\Ep[R(\hat{f}_n) - \inf_{f \in \mH} R(f)] \leq \Ep[R(\hat{f}_n) -  R(\tilde{f}^*)]$ by the fact $R(\tilde{f}^*) \leq \inf_{f \in \mH} R(f)$, and the above results yield the statement.
\end{proof}

\section{Entropy Analysis for Functional Data} \label{app:entropy}
We provide several technical results with empirical process techniques.
\begin{lemma}\label{lem:erm_ineq}
    Recall the definition of $\Delta(n,\gamma,t,R)$ in \eqref{def:uniform_bound}.
    For any $f' \in \mH$, we obtain
\begin{align*}
    P_n(\ell\circ f')-\inf_{f: \|f\|_\mH\leq R} P_n(\ell\circ f) \leq P(\ell\circ  f')-\inf_{f: \|f\|_\mH \leq R} P(\ell\circ f) + 2 \Delta(n,\gamma,t,R)
\end{align*}
with probability at least $1-e^t$ with $t > 0$.
\end{lemma}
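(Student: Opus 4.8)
The plan is to control the difference between empirical and population excess losses uniformly over the ball $\{f \in \mH : \|f\|_\mH \le R\}$ by a single supremum of an empirical process, and then bound that supremum using the entropy hypothesis of Assumption \ref{asmp:covering} together with the pointwise and Lipschitz properties \eqref{ineq:prop_rkhs} of the RKHS. First I would write, for any $f' \in \mH$ and any $g$ in the ball with $\|g\|_\mH \le R$,
\begin{align*}
  P_n(\ell \circ f') - P_n(\ell \circ g)
  &= \bigl(P(\ell \circ f') - P(\ell \circ g)\bigr)
   + \bigl(P_n(\ell \circ f') - P(\ell \circ f')\bigr)
   - \bigl(P_n(\ell \circ g) - P(\ell \circ g)\bigr),
\end{align*}
take the infimum over $g$ in the ball on both sides, and observe that the last two centered terms are each bounded in absolute value by $\sup_{f:\|f\|_\mH \le R} |(P_n - P)(\ell \circ f)|$ (for the $f'$-term one replaces $f'$ by its projection onto the ball, or simply notes that $\|f'\|_\mH \le R$ in the regime where the lemma is invoked). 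This reduces the claim to showing
\begin{align*}
  \Pr\Bigl( \sup_{f:\|f\|_\mH \le R} \bigl| (P_n - P)(\ell \circ f) \bigr| \le R\, c_{V,\gamma} (\log n)^{-1/\gamma} + \sqrt{2t/n} \Bigr) \ge 1 - e^{-t}.
\end{align*}

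Next I would establish this one-sided deviation bound in two pieces. The $\sqrt{2t/n}$ term comes from concentration: since $\ell$ is bounded by $1$, the function $(X,Y)\mapsto \ell(Yf(X))$ takes values in $[0,1]$, so by the bounded-differences (McDiarmid) inequality the supremum concentrates around its mean at scale $\sqrt{t/(2n)}$ with probability $1 - e^{-t}$; absorbing constants gives the $\sqrt{2t/n}$ summand. The expectation $\Ep \sup_{f:\|f\|_\mH \le R} |(P_n - P)(\ell \circ f)|$ is then bounded by the deterministic term $R\,c_{V,\gamma}(\log n)^{-1/\gamma}$ via symmetrization and a chaining/Dudley bound. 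Here is where the entropy analysis enters: because $\ell$ is $1$-Lipschitz, the covering numbers of the loss class $\{\ell \circ f : \|f\|_\mH \le R\}$ in $L^\infty$ are controlled by those of $\{f : \|f\|_\mH \le R\}$; and by the second inequality in \eqref{ineq:prop_rkhs}, $\|f(x) - f(x')\| \le R\|x - x'\|$, so an $\varepsilon/R$-cover of $\mX$ under $d$ (whose cardinality is bounded through Assumption \ref{asmp:covering} by $\exp(V (R/\varepsilon)^\gamma)$) induces an $\varepsilon$-cover of the function class. This is exactly the content that should be packaged as Lemma \ref{lem:covering}; feeding $\log \mN(\varepsilon, \{\ell\circ f : \|f\|_\mH \le R\}, \|\cdot\|_\infty) \le V (R/\varepsilon)^\gamma + (\text{const})$ into Dudley's entropy integral and using that $\gamma$ gives a polynomially-decaying integrand whose value the authors evidently simplify to the stated $R\,c_{V,\gamma}(\log n)^{-1/\gamma}$ (with $c_{V,\gamma} = 2\sqrt{R}(\sqrt 6 + V^{-1}3^{-\gamma})\exp(V 3^\gamma)$ as quoted in the proof of Theorem \ref{thm:convergence}).

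The main obstacle I expect is precisely the last simplification — turning the Dudley entropy integral $\int_0^{\text{diam}} \sqrt{\log \mN(\varepsilon,\cdot)}\, d\varepsilon$, with $\log \mN \asymp (R/\varepsilon)^\gamma$, into a clean closed form of order $(\log n)^{-1/\gamma}$. A pure entropy integral with that behavior converges to a constant independent of $n$, not to $(\log n)^{-1/\gamma}$; so the $(\log n)^{-1/\gamma}$ must come from truncating the class or the chaining at a scale tied to $n$ (e.g. because $f$ is effectively a linear combination of $n$ kernel atoms, or because one only needs accuracy matching the statistical noise level), and getting that truncation argument exactly right — so that the residual bias is $R\,c_{V,\gamma}(\log n)^{-1/\gamma}$ rather than something larger — is the delicate step. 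Everything else (symmetrization, the Lipschitz contraction for the loss, McDiarmid, and the reduction at the top) is routine. I would therefore structure the write-up so that the entropy bound is isolated in Lemma \ref{lem:covering}, the deviation bound in Lemma \ref{lem:empirical_bound}, and the present lemma follows in two lines from the display above.
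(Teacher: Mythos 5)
Your decomposition and reduction are exactly the paper's proof of this lemma: add and subtract $P(\ell\circ f')$ and $\inf_{f:\|f\|_\mH\le R}P(\ell\circ f)$, bound each of the two centered terms by $\sup_{f:\|f\|_\mH\le R}|(P_n-P)(\ell\circ f)|\le\Delta(n,\gamma,t,R)$, and invoke the uniform deviation bound of Lemma \ref{lem:empirical_bound}, which the paper indeed proves via McDiarmid, symmetrization, Ledoux--Talagrand contraction, and a truncated Dudley integral, just as you outline. Your caveat that $f'$ must lie in the ball for the first centered term to be dominated by the supremum is correct and is implicit in the paper too, since the lemma is only ever applied with $\|f'\|_\mH\le R$.

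One point in your sketch is off, and it is exactly the point you flag as delicate. The entropy of the loss class is not $\log\mN(\varepsilon,\cdot,\|\cdot\|_{L^\infty})\lesssim V(R/\varepsilon)^{\gamma}$: an $(\varepsilon/R)$-cover of $\mX$ of log-cardinality $V(R/\varepsilon)^{\gamma}$ only reduces the problem to covering the restrictions of $\mH_R$ to $m\asymp\exp\{V(R/\varepsilon)^{\gamma}\}$ input points, and covering that set costs roughly $m$ times a logarithmic factor, so the paper's Lemma \ref{lem:covering} gives $\log\mN(\varepsilon,\mH_R,\|\cdot\|_{n})\lesssim R\exp\{V(3R/\varepsilon)^{\gamma}\}$ --- exponentially larger than what you wrote. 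Consequently the Dudley integral does not converge to an $n$-independent constant as you worried; it diverges at the origin, and the $(\log n)^{-1/\gamma}$ rate arises from the truncated chaining bound $4\alpha+12\,n^{-1/2}\int_{\alpha}^{R}\sqrt{\log\mN(\varepsilon,\mH_R,\|\cdot\|_n)}\,d\varepsilon$ with the truncation level optimized to $\alpha\asymp R(\log n^{1/4})^{-1/\gamma}$, balancing the linear term against the doubly exponential entropy (see the proof of Lemma \ref{lem:bound_rn}). No truncation of the class or appeal to the representer theorem is needed: the logarithmic rate is forced by the size of function classes over infinite-dimensional inputs. This does not affect the validity of the present lemma, whose proof is complete once Lemma \ref{lem:empirical_bound} is granted.
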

\begin{proof}[Proof of Lemma \ref{lem:erm_ineq}]
Simply, we obtain
\begin{align*}
    &P_n(\ell\circ f')-\inf_{f: \|f\|_\mH\leq R} P_n(\ell\circ f)\\
    &=P_n(\ell\circ f')-P(\ell\circ f)+P(\ell\circ f')\\
    & \quad -\inf_{f: \|f\|_\mH\leq R} P(\ell\circ f)+\inf_{f: \|f\|_\mH\leq R} P(\ell\circ f)-\inf_{f:\|f\|_\mH\leq R} P_n(\ell\circ f)\\
    &\leq \{P_n(\ell\circ f)-P(\ell\circ f)\}\\
    & \quad +\left\{P(\ell\circ f)-\inf_{f: \|f\|_\mH\leq R} P(\ell\circ f)\right\}+\{P(\ell\circ f^\dagger)-P_n(\ell\circ f^\dagger)\}.
\end{align*}
By Lemma \ref{lem:empirical_bound}, $\Delta(n,\gamma,t,R)$ bounds the last two terms with probability at least $1-t$. 
\end{proof}

To complete Lemma \ref{lem:erm_ineq}, we provide the following lemma.
This result is a well-known result with the Rademacher complexity, but we provide it for the sake of completeness.
We introduce a ball in $\mH$ with radius $R$ as $\mH_R=\{f\in\mathcal{H}:\|f\|_\mH\leq R\}$.
\begin{lemma}\label{lem:empirical_bound}
Define $c_{V,\gamma} =  2\sqrt{R}(\sqrt{6}+\frac{1}{V3^{\gamma}})\exp(V3^{\gamma})$.
For any $t>0$ and any $n \in \N$, we obtain
\begin{align*}
  \Pr\left(\sup_{f\in \mH_R}|P_n(\ell\circ f) - P(\ell\circ f)| \leq Rc_{V,\gamma}{(\log n)^{-1/\gamma}}+\sqrt{{2t}/{n}} \right) \geq 1-\exp({-t}).
\end{align*} 
\end{lemma}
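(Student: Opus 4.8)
The plan is to prove this as a standard uniform deviation bound built from (i) the bounded differences / McDiarmid inequality to control $\sup_{f \in \mH_R}|P_n(\ell\circ f)-P(\ell\circ f)|$ by its expectation plus a $\sqrt{2t/n}$ fluctuation term, (ii) symmetrization to replace the expected supremum by a Rademacher complexity of the loss class $\ell\circ\mH_R$, (iii) a Lipschitz contraction (Talagrand) inequality to strip off $\ell$ at the cost of the Lipschitz constant $1$, and (iv) a Dudley entropy integral to bound the Rademacher complexity of $\mH_R$ itself, where the crucial input is the metric entropy bound for $\mH_R$ coming from Assumption~\ref{asmp:covering} together with the RKHS Lipschitz property \eqref{ineq:prop_rkhs}. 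The explicit constant $c_{V,\gamma} = 2\sqrt{R}(\sqrt{6}+\tfrac{1}{V3^\gamma})\exp(V3^\gamma)$ and the rate $(\log n)^{-1/\gamma}$ are exactly what falls out of feeding $\log\mN(\varepsilon,\mX,d)\le V\varepsilon^{-\gamma}$ into the Dudley integral.

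First I would check the bounded-differences hypothesis: since $\ell$ is bounded by $1$, changing one sample $(X_i,Y_i)$ changes $P_n(\ell\circ f)$ by at most $1/n$ uniformly in $f$, so the map $(X_1,Y_1),\dots,(X_n,Y_n)\mapsto \sup_{f\in\mH_R}|P_n(\ell\circ f)-P(\ell\circ f)|$ has bounded differences $1/n$; McDiarmid then gives, with probability at least $1-\exp(-t)$,
\begin{align*}
\sup_{f\in\mH_R}|P_n(\ell\circ f)-P(\ell\circ f)| \le \Ep\left[\sup_{f\in\mH_R}|P_n(\ell\circ f)-P(\ell\circ f)|\right] + \sqrt{\frac{2t}{n}}.
\end{align*}
Next, the usual symmetrization argument bounds the expected supremum by $2\,\Ep\,\Ep_\sigma \sup_{f\in\mH_R} n^{-1}|\sum_i \sigma_i \ell(Y_i f(X_i))|$, and since $\ell$ is $1$-Lipschitz the Ledoux--Talagrand contraction inequality removes $\ell$ (the factor $Y_i\in\{-1,1\}$ is absorbed into the Rademacher signs), leaving $4\,\Ep\,\Ep_\sigma \sup_{f\in\mH_R} n^{-1}|\sum_i \sigma_i f(X_i)|$, i.e. four times the Rademacher complexity of $\mH_R$ evaluated on $X_1,\dots,X_n$.

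For the Rademacher complexity of $\mH_R$ I would use the Dudley entropy integral in the empirical $L^2(P_n)$ metric. The key point, specific to functional data, is that the second inequality in \eqref{ineq:prop_rkhs} gives $|f(x)-f(x')|\le \|f\|_\mH\|x-x'\| \le R\|x-x'\|$ for $f\in\mH_R$, so an $\varepsilon/R$-cover of $\mX$ in $d$ induces an $\varepsilon$-cover of $\mH_R$ restricted to those design points in sup-norm, hence in the empirical norm; thus $\log\mN(\varepsilon,\mH_R,\|\cdot\|_n) \le \log\mN(\varepsilon/R,\mX,d) \le VR^\gamma\varepsilon^{-\gamma}$ whenever $\varepsilon/R<\bar\varepsilon$. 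Also $\sup_{x}|f(x)|\le\|f\|_\mH\le R$ by the first inequality in \eqref{ineq:prop_rkhs}, so the relevant entropy integral runs over $[0,2R]$ (or I cap it and handle the low-$\varepsilon$ piece of the integral, which is where $\exp(V3^\gamma)$ type constants come from). Carrying out the Dudley bound $\lesssim n^{-1/2}\int_0^{2R}\sqrt{\log\mN(\varepsilon,\mH_R,\|\cdot\|_n)}\,d\varepsilon$ and noting $\gamma$ may be large (so the integral $\int_0\varepsilon^{-\gamma/2}d\varepsilon$ can diverge and the naive $n^{-1/2}$ rate is lost), the sharpest choice of truncation level optimizes against $n$ and produces the $(\log n)^{-1/\gamma}$ rate rather than a polynomial one — this matching of the slow logarithmic rate is the real content. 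Collecting the constant bookkeeping yields the stated $Rc_{V,\gamma}(\log n)^{-1/\gamma}$, and combining with the McDiarmid term gives the claim.

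The main obstacle is step (iv): controlling the Dudley integral when $\gamma$ is arbitrary (in particular $\gamma\ge2$, so $\sqrt{\log\mN}\sim\varepsilon^{-\gamma/2}$ is non-integrable at $0$) forces a careful chaining truncation and a delicate optimization in $n$ to extract exactly the $(\log n)^{-1/\gamma}$ rate with the advertised constant $c_{V,\gamma}$; getting the explicit form of $c_{V,\gamma}$, including the $\exp(V3^\gamma)$ factor, requires tracking constants through that truncation argument rather than using a black-box chaining lemma.
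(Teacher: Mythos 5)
Your overall architecture (McDiarmid, symmetrization, Ledoux--Talagrand contraction, then a Dudley-type entropy integral) matches the paper's proof, but there is a genuine gap at the one step that is specific to functional data: your entropy bound for $\mH_R$. You claim that an $\varepsilon/R$-cover of $\mX$ induces an $\varepsilon$-cover of $\mH_R$, so that $\log\mN(\varepsilon,\mH_R,\|\cdot\|_n)\le\log\mN(\varepsilon/R,\mX,d)\le VR^{\gamma}\varepsilon^{-\gamma}$. This is false. The Lipschitz property $|f(x)-f(x')|\le R\|x-x'\|$ tells you that each $f\in\mH_R$ is determined up to $O(R\varepsilon)$ by its values on an $\varepsilon$-net $\{x_1,\dots,x_m\}$ of $\mX$, but to cover $\mH_R$ you must count how many distinct discretized value vectors $\left(f(x_1),\dots,f(x_m)\right)$ can occur; since each coordinate ranges over roughly $R/\varepsilon$ grid values, this count is exponential in $m=\mN(\varepsilon,\mX,d)$, not equal to $m$. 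The correct bound (Lemma~\ref{lem:covering}) is therefore of the form $\log\mN(\varepsilon,\mH_R,\|\cdot\|_n)\lesssim R\exp\{V(3R/\varepsilon)^{\gamma}\}$: the log-covering number of the hypothesis class is \emph{exponential} in $\varepsilon^{-\gamma}$, not polynomial.

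This is not bookkeeping; it changes the rate. With your polynomial entropy the truncated Dudley bound $4\alpha+Cn^{-1/2}\int_{\alpha}\varepsilon^{-\gamma/2}\,d\varepsilon$ optimizes to a \emph{polynomial} rate in $n$ (roughly $n^{-1/\gamma}$ for $\gamma>2$ and $n^{-1/2}$ for $\gamma<2$), so the advertised $(\log n)^{-1/\gamma}$ and the factor $\exp(V3^{\gamma})$ in $c_{V,\gamma}$ would not ``fall out'' of your computation --- you would in fact be proving a much stronger (and unobtainable) statement. The logarithmic rate appears precisely because the integrand behaves like $\exp\{V(3R/\varepsilon)^{\gamma}/2\}$, which forces the truncation level $\alpha\asymp(\log n)^{-1/\gamma}$ (the paper takes $\varepsilon_0=(\log n^{1/4})^{-1/\gamma}$ in Lemma~\ref{lem:bound_rn}) and generates the $\exp(V3^{\gamma})$ constant. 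To repair your argument you must first establish the exponential covering bound for $\mH_R$ via the value-vector counting argument and only then run the chaining step.
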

\begin{proof}[Proof of Lemma \ref{lem:empirical_bound}]
We firstly bound the term $\sup_{f\in \mH_R}|P_n(\ell\circ f) - P(\ell\circ f)|$ by its expectation and others.
Since a variation of the term
is at most $2/n$ when one pair of $\{(X_i,Y_i)\}_{i=1}^n$ changes, the McDiarmid's inequality (Theorem 3.3.14 in \cite{gine2016mathematical}) implies that with probability at least $1-e^{-t}$,
\begin{align}
    \sup_{f\in \mH_R}|P_n(\ell\circ f) - P(\ell\circ f)| \leq \Ep\left[\sup_{f\in \mH_R}|P_n(\ell\circ f) - P(\ell\circ f)| \right]+ \sqrt{\frac{2t}{n}}. \label{ineq:fuga}    
\end{align}

Secondly, to bound the expectation term, we define the conditional Rademacher complexity of a class of functions $\mathcal{G}$ as follows:
\begin{align*}
    \mR_n(\mathcal{G})=\frac{1}{n}E_{\sigma}\left[\sup_{f\in \mathcal{G}}\sum_{i=1}^n\sigma_if(x_i)\right],
\end{align*} 
where $\sigma_1,\dots,\sigma_n$ are independent random variables which is $1$ with probability $1/2$ and $-1$ otherwise.
We introduce $(X_1',Y_1'),\cdots,(X_n',Y_n')$ as independent pairs of random variables with the same distribution as $(X,Y)$.
We apply the independent pairs and bound the expectation term in \eqref{ineq:fuga} as
\begin{align*}
    &\Ep\left[\sup_{f\in \mH_R}|P_n(\ell\circ f) - P(\ell\circ f)| \right]\\
    &=\Ep\left[\sup_{f\in \mH_R}\Ep\left[\left|\frac{1}{n}\sum_{i=1}^n\ell(Y_if(X_i)) - \frac{1}{n}\sum_{i=1}^n\ell(Y_i'f(X_i'))\right| \mid \{(X_i,Y_i)\}_{i=1}^n \right]\right]\\
    &\leq \Ep\left[\sup_{f\in \mH_R}\left|\frac{1}{n}\sum_{i=1}^n\ell(Y_if(X_i)) - \frac{1}{n}\sum_{i=1}^n\ell(Y_i'f(X_i'))\right|\right]\\
    &= \Ep_\sigma\left[\sup_{f\in \mH_R}\frac{1}{n}\sum_{i=1}^n \left|\sigma_i \{\ell(Y_if(X_i)) - \ell(Y_i'f(X_i'))\}\right|\right]\\
    &\leq \Ep_{\sigma} \left[\sup_{f\in \mH_R}\frac{1}{n}\sum_{i=1}^n \sigma_i \ell(Y_if(X_i))\right] + \Ep_\sigma \left[\sup_{f\in \mH_R}\frac{1}{n}\sum_{i=1}^n \sigma_i\ell(Y_i'f(X_i'))\right]\\
    &= 2\mR_n(\ell\circ \mH_R),
\end{align*} where $\ell\circ \mH_R=\{\ell \circ f:f\in \mH_R\}$.
The first inequality following the Jensen's inequality, and the third equality follows the distribution equivalence by the random variable $\sigma$.
We further apply the Ledoux-Talagrand contraction inequality (Theorem 3.2.1 in \cite{gine2016mathematical}) with the $1$-Lipschitz continuity of $\ell$ yields $\mR_n(\ell\circ \mH_R)\leq \mR_n(\mH_R)$.
Combining the result with $\eqref{ineq:fuga}$, we obtain
\begin{align}
          \sup_{f\in \mH_R}|P_n(\ell\circ f) - P(\ell\circ f)| \leq 2 \mR_n(\ell\circ \mH_R)+ \sqrt{{2t} / {n}} \leq 2 \mR_n(\mH_R)+ \sqrt{{2t} / {n}}. \label{ineq:hoge}
\end{align}
with probability at least $1-e^{-t}$.

Finally, we apply Lemma \ref{lem:bound_rn} and bound $\mR_n(\mH_R)$.
Then, we obtain the statement.
\end{proof}

To complete the empirical process result, we develop the following covering number result, which is a key term to study the convergence of the classifier with functional data.
\begin{lemma}\label{lem:covering}
There exists a constant $\bar{c} > 0$ such that for any $\varepsilon \in (0,\bar{c})$ such that the following holds:
\begin{eqnarray*}
    \log \mN(\varepsilon,\mH_R,\|\cdot\|_{n}) \leq \frac{6R^2}{\varepsilon}+4R\left[\exp\left\{
V \left(\frac{3R}{\varepsilon}\right)^{\gamma} \right\}-1\right].
\end{eqnarray*}
\end{lemma}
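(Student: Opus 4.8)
The plan is to reduce the statement to a Kolmogorov--Tikhomirov type covering estimate for Lipschitz functions on $\mX$. First I would eliminate the data and the empirical norm: since each $X_i$ lies in $\mX$, we have $\|f-g\|_n \le \|f-g\|_{L^\infty}$, where the sup-norm is over $\mX$, so it suffices to bound $\log\mN(\varepsilon,\mH_R,\|\cdot\|_{L^\infty})$ (equivalently, one may build a $\delta$-net of $\{X_1,\dots,X_n\}$ out of a $\delta$-net of $\mX$; both routes give the same $n$-free bound). By the reproducing-kernel inequalities \eqref{ineq:prop_rkhs} with $c_\mH=1$, every $f\in\mH_R$ satisfies $\|f\|_{L^\infty}\le R$ and $|f(x)-f(x')|\le R\|x-x'\|$, so $\mH_R$ embeds into the class of $R$-Lipschitz functions on $\mX$ that are bounded by $R$. (When the metric $d$ of Assumption \ref{asmp:covering} is coarser than $\|\cdot\|$, e.g.\ $d=\|\cdot\|_{L^\infty}$ on a bounded index set, a $\delta$-net in $d$ is still a $\delta$-net in $\|\cdot\|$, which is all that is used below.)

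Next I would fix $\delta\asymp\varepsilon/R$ (concretely $\delta=\varepsilon/(3R)$) and a minimal $\delta$-net $\{x_1,\dots,x_N\}$ of $\mX$ in $d$; Assumption \ref{asmp:covering} then gives $N\le\exp\{V(3R/\varepsilon)^\gamma\}$ once $\varepsilon$ is small enough that $\delta<\bar\varepsilon$, and this restriction fixes the constant $\bar c$. The point is that two $R$-Lipschitz functions that agree to within $\asymp\varepsilon$ at every $x_j$ agree to within $\varepsilon$ on all of $\mX$ (triangle inequality together with $R\delta\asymp\varepsilon$). Hence an $\ell^\infty$-net at scale $\asymp\varepsilon$ of the value set $S=\{(f(x_1),\dots,f(x_N)):f\in\mH_R\}\subset[-R,R]^N$ yields an $\varepsilon$-net of $\mH_R$ in $\|\cdot\|_{L^\infty}$, up to adjusting numerical constants. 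So the task becomes bounding $\mN(\cdot,S,\ell^\infty)$, where $S$ additionally obeys the Lipschitz constraints $|v_j-v_k|\le R\,d(x_j,x_k)$.

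To bound $\mN(\cdot,S,\ell^\infty)$ I would run the greedy Kolmogorov--Tikhomirov discretization: order $x_1,\dots,x_N$ along a spanning structure on the net so that each $x_j$ with $j\ge2$ has a predecessor within $O(\delta)$; round $v_1$ to a multiple of $\delta$ in $[-R,R]$, costing at most $2R/\delta+1=6R^2/\varepsilon+1$ choices; and round each subsequent $v_j$ to a multiple of $\delta$ inside the short interval, of length $O(R\delta)=O(\varepsilon)$, that the Lipschitz bound forces around the already-rounded predecessor value, costing at most $4R+1$ choices. Multiplying the per-point counts gives $\mN(\cdot,S,\ell^\infty)\le(6R^2/\varepsilon+1)(4R+1)^{N-1}$, and taking logarithms with $\log(1+x)\le x$ yields $\log\mN\le 6R^2/\varepsilon+4R(N-1)$; substituting the bound on $N$ finishes the proof.

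The step I expect to be delicate is the last one: squeezing the number of choices per net point down to $O(R)$ rather than the naive $O(\log(R/\varepsilon))$, which is exactly what keeps the second term linear (not logarithmic or polynomial) in $\varepsilon^{-1}$. This forces one to prevent rounding errors from accumulating across the net and to organize the net so that every point has a nearby predecessor --- implicitly a chain-connectedness property of $\mX$, harmless for the convex domains in our examples but worth stating. The reductions in the first two paragraphs are routine, resting only on \eqref{ineq:prop_rkhs}, the triangle inequality, and a direct application of Assumption \ref{asmp:covering}.
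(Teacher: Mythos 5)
Your proposal is correct and follows essentially the same route as the paper's proof: reduce $\|\cdot\|_n$ to $\|\cdot\|_{L^\infty}$, take a net of $\mX$ of cardinality controlled by Assumption \ref{asmp:covering}, discretize function values on that net, and exploit the RKHS Lipschitz bound \eqref{ineq:prop_rkhs} so that the first net point costs $O(R^2/\varepsilon)$ choices and each subsequent point only $4R+1$ choices, giving the product bound $(6R^2/\varepsilon+1)(4R+1)^{N-1}$. The "delicate" chaining issue you flag (ordering the net so each point has a nearby predecessor) is present in the paper's argument as well, where it is handled by simply assuming the net points are ordered with consecutive points within $2\varepsilon$ of each other.
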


\begin{proof}[Proof of Lemma \ref{lem:covering}]
As preparation, we consider an $\varepsilon$-covering set $\mathcal{X}$ of functions $\{x_1,\dots x_m\}$ for $\mathcal{X}$ with $m=m(\varepsilon)$, that is, for any $x \in \mX$, there exists $x_j$ from the set such that $\|x - x_j\| \leq \varepsilon$ holds.
By Assumption \ref{asmp:covering}, we have $m\leq \exp (c \varepsilon^{-\gamma})$. 
We consider grids in $\mH_R$.
For each $f\in \mH_R$, we define a vector
\begin{align*}
    A_f=\left(\lfloor {f(x_1)}/{\varepsilon} \rfloor \,,\,\lfloor {f(x_2)}/{\varepsilon}\rfloor ,...,\lfloor {f(x_n)}/{\varepsilon}\rfloor\right)^\top \in \R^m.
\end{align*} For any pair $f,g \in \mH_R$ such that $\max_{i=1,...,m}|f(x_i)-g(x_i)|< \varepsilon$ holds, we obtain $A_f = A_g$ since $\lfloor {f(x_i)}/{\varepsilon} \rfloor =\lfloor{g(x_i)}/{\varepsilon}\rfloor$ holds.
We also mention the following difference: for any $x \in \mX$ and $f \in \mH_R$, we obtain
    $|f(x)-f(x_i)|\leq \|f\|_{\mathcal{H}} \|x - x_i\| \leq R \| x - x_i\|
    \leq R\varepsilon$,
by the property of \eqref{ineq:prop_rkhs} and that of the covering set.

With these results, we bound the following distance with the pair $f,g$ and any $x \in \mX$: 
\begin{align*}
    |f(x)-g(x)|&=|f(x)-f(x_i)+f(x_i)-g(x_i)+g(x_i)-g(x)|\\
    &\leq|f(x)-f(x_i)|+|f(x_i)-g(x_i)|+|g(x_i)-g(x)|\\
    &\leq (2R+1)\varepsilon,
\end{align*}
hence we have $\|f-g\|_{L^\infty} \leq (2R+1) \varepsilon$.

From the above discussion, the covering number $\mN((2R+1)\varepsilon,\mH_R,\|\cdot\|_{L^\infty})$ is bounded by the number of different $A_f$ when $f$ ranges over $\mH_R$. 
Since $|f(x)|\leq \|f\|_{\mathcal{H}_R}\leq R$ for any $x\in\mathcal{X}$ by \eqref{ineq:prop_rkhs}, the number of possible values of each element of $A_f$ is bounded by $({2R}/{\varepsilon}+1)$.
Assume the covering set $x_1,\dots,x_m$ is ordered such that $i<j$ implies $\|x_i - x_j\|<2\varepsilon$, then we obtain
$|f(x_j)-f(x_i)|\leq R \|x_j - x_i \| <2R\varepsilon$.
Therefore, we obtain 
\begin{align*}
    -2R\varepsilon+f(x_i)<f(x_j)<2R\varepsilon+f(x_i).
\end{align*}
It implies that for given $f(x_i)$ the number of possible values of $\lfloor {f(x_j)}/{\varepsilon}\rfloor$ is at most $4R+1$. 
Hence, we can bound the covering number as
\begin{align*}
    \mN((2R+1)\varepsilon,\mH_R,\|\cdot\|_{L^\infty})& = |\{A_f: f \in \mH_R\}| \\&\leq ({2R}/{\varepsilon}+1) (4R+1)^{m-1}\\& \leq ({2R}/{\varepsilon}+1) (4R+1)^{\exp{(c \varepsilon^{-\gamma})}-1}
\end{align*}
As a result, we obtain the following bound in the norm $\|\cdot\|_{L^\infty}$:
\begin{align*}
&\log N(\varepsilon, \mH_R,\|\cdot\|_{L^\infty})\\
&\leq \log\left\{\frac{2R(2R+1)}{\varepsilon}+1\right\}\,+ \left[\exp{\left\{
V \left(\frac{2R+1}{\varepsilon}\right)^{\gamma}\right\}}-1\right]\,\log(4R+1). 
\end{align*} 
Since the empirical norm $\|\cdot\|_n$ possesses the Riesz property (e.g. page 83 in \cite{van1996weak}), we obtain
\begin{align*}
    &\log N(\varepsilon,\mH_R,\|\cdot\|_{n}) \\
    &\leq \left({2R(2R+1)} / {\varepsilon}+1\right) + \left[\exp \left\{V \left(\frac{2R+1}{\varepsilon}\right)^{\gamma}\right\}-1\right] \log(4R+1) \\
    &\leq 6 \frac{R^2}{\varepsilon}\,+4R \left[\exp \left\{V \left(\frac{3R}{\varepsilon}\right)^{\gamma}\right\}-1\right],
\end{align*}
by the setting $R \geq 1$.
Then, we obtain the statement.
\end{proof}

The following result is to bound the Rademacher complexity by the covering number.
Although technique follows a standard discussion by the Dudley's integral, the covering number for functional data analysis has a specific role from functional data.
\begin{lemma}\label{lem:bound_rn}
Suppose Assumption \ref{asmp:covering} holds. 
Then, we obtain
\begin{eqnarray*}
    \mR_n(\mH_R) \leq {Rc_{V,\gamma}}{(\log n)^{-1/{\gamma}}}.
\end{eqnarray*}
\end{lemma}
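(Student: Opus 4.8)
The plan is to bound $\mR_n(\mH_R)$ by a single discretization step of $\mH_R$ at a scale that shrinks only logarithmically in $n$. Standard chaining (the Dudley entropy integral) is of no use here: by Lemma \ref{lem:covering} the metric entropy of $\mH_R$ grows like $\exp\{V(3R/\varepsilon)^\gamma\}$ as $\varepsilon\downarrow 0$, so $\int_0\sqrt{\log\mN(\varepsilon,\mH_R,\|\cdot\|_n)}\,d\varepsilon$ diverges and only an approximation at a single scale bounded away from $0$ is affordable. The whole point is that the best admissible scale turns out to be $\varepsilon_n\asymp(\log n)^{-1/\gamma}$, and this is precisely what produces the $(\log n)^{-1/\gamma}$ rate.

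Concretely, I would fix a scale $\varepsilon_n\in(0,\bar c)$ to be chosen and take an $\varepsilon_n$-net $\{f_1,\dots,f_N\}\subset\mH_R$ of $\mH_R$ in the empirical norm $\|\cdot\|_n$, with $N=\mN(\varepsilon_n,\mH_R,\|\cdot\|_n)$. For an arbitrary $f\in\mH_R$, pick $f_j$ with $\|f-f_j\|_n\le\varepsilon_n$; then, by the Cauchy--Schwarz inequality and uniformly in the signs $\sigma$,
\begin{align*}
  \frac1n\sum_{i=1}^n\sigma_i\bigl(f(x_i)-f_j(x_i)\bigr)\le\frac1n\sum_{i=1}^n|f(x_i)-f_j(x_i)|\le\|f-f_j\|_n\le\varepsilon_n,
\end{align*}
so $\mR_n(\mH_R)\le\varepsilon_n+\tfrac1n\,\Ep_\sigma\bigl[\max_{j\le N}\sum_{i=1}^n\sigma_i f_j(x_i)\bigr]$. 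Since $|f_j(x_i)|\le\|f_j\|_\mH\le R$ by \eqref{ineq:prop_rkhs}, each vector $(f_j(x_1),\dots,f_j(x_n))$ has Euclidean norm at most $R\sqrt n$, and Massart's finite-class maximal inequality gives $\Ep_\sigma[\max_{j\le N}\sum_i\sigma_i f_j(x_i)]\le R\sqrt n\sqrt{2\log N}$. Hence
\begin{align*}
  \mR_n(\mH_R)\le\varepsilon_n+R\sqrt{\frac{2\log\mN(\varepsilon_n,\mH_R,\|\cdot\|_n)}{n}}.
\end{align*}

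Finally I would insert the bound of Lemma \ref{lem:covering}, $\log\mN(\varepsilon_n,\mH_R,\|\cdot\|_n)\le 6R^2/\varepsilon_n+4R[\exp\{V(3R/\varepsilon_n)^\gamma\}-1]$, and choose $\varepsilon_n$ so that the exponential term stays strictly sublinear in $n$ — for instance $\varepsilon_n=3R(2V/\log n)^{1/\gamma}$, which makes $V(3R/\varepsilon_n)^\gamma=\tfrac12\log n$, hence $\exp\{V(3R/\varepsilon_n)^\gamma\}=\sqrt n$ and $6R^2/\varepsilon_n=2R(2V)^{-1/\gamma}(\log n)^{1/\gamma}$. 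With this choice the second term above is at most $R\sqrt{4R(2V)^{-1/\gamma}(\log n)^{1/\gamma}/n+8R/\sqrt n}$, which decays polynomially in $n$ and is therefore $o(R(\log n)^{-1/\gamma})$, while the first term is $\varepsilon_n=3R(2V)^{1/\gamma}(\log n)^{-1/\gamma}$. Collecting the two contributions and enlarging the constant to absorb the lower-order polynomial term yields $\mR_n(\mH_R)\le Rc_{V,\gamma}(\log n)^{-1/\gamma}$, with the explicit $c_{V,\gamma}$ of Lemma \ref{lem:empirical_bound} obtained by carrying out this bookkeeping with the displayed choice of $\varepsilon_n$. The main obstacle is exactly this balancing act: because Assumption \ref{asmp:covering} only gives $\log\mN(\varepsilon,\mX,d)\lesssim\varepsilon^{-\gamma}$, the entropy of $\mH_R$ is \emph{exponential} in $\varepsilon^{-\gamma}$, so no scale $\varepsilon_n$ polynomially small in $n$ is admissible; one is forced to take $\varepsilon_n$ logarithmically small, and the delicate point is verifying that at this scale the cardinality term $R\sqrt{\log N/n}$ is genuinely negligible next to the discretization error $\varepsilon_n$.
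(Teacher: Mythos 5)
Your proof is correct, and it reaches the bound by a more elementary route than the paper. The paper invokes the refined (truncated) Dudley entropy integral of Srebro and Sridharan, $\mR_n(\mH_R)\le\inf_{\alpha\ge 0}\{4\alpha+12\int_\alpha^{R}\sqrt{\log\mN(\varepsilon,\mH_R,\|\cdot\|_n)/n}\,d\varepsilon\}$, bounds the integral using Lemma \ref{lem:covering} via the substitution $\tau=R/\varepsilon$, and then optimizes the truncation level at $\alpha\asymp R(\log n)^{-1/\gamma}$; you replace the chaining step entirely by a single-scale net plus Massart's finite-class maximal inequality. The two arguments share the decisive observation --- that the entropy of $\mH_R$ is exponential in $\varepsilon^{-\gamma}$, so the only admissible resolution is logarithmically small in $n$ and the discretization error $\varepsilon_n\asymp(\log n)^{-1/\gamma}$ dominates --- and your diagnosis that chaining buys nothing here is accurate: in the paper's computation the integral contributes only a polynomially small term and the entire rate comes from the additive $4\alpha$. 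Your version is arguably cleaner because it avoids estimating an entropy integral whose integrand blows up doubly fast near the lower limit. The only discrepancies are cosmetic: your explicit constant is $3(2V)^{1/\gamma}$ plus a polynomially decaying remainder, rather than the paper's $c_{V,\gamma}=2\sqrt{R}(\sqrt{6}+\frac{1}{V3^{\gamma}})\exp(V3^{\gamma})$, so if the lemma must be quoted with that specific constant you would need to enlarge it or redo the bookkeeping; and since Lemma \ref{lem:covering} is stated only for $\varepsilon\in(0,\bar{c})$, your choice $\varepsilon_n=3R(2V/\log n)^{1/\gamma}$ requires $n$ large enough, with small $n$ absorbed by the trivial bound $\mR_n(\mH_R)\le R$ and a suitably enlarged constant.
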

\begin{proof}[Proof of Lemma \ref{lem:bound_rn}]

Now, we bound $\mR_n(\mH_R)$ using the following inequality learned from \cite{srebro2010note}:
\begin{align*}
    R_n(\mH_R) \leq \inf_{\alpha\geq 0}\left\{\,4\alpha +12\int_{\alpha}^{\sup_{f\in \mH_R}\sqrt{P_nf^2}} \sqrt{\frac{\log N(\varepsilon,\mH_R,\|\cdot\|_n)}{n}} \,d\varepsilon \,\right\} 
\end{align*}
As $\sup_{x\in \mX}|f(x)|\leq \|f\|_{\mH_R}\leq R$ for all $f\in \mH_R$, we obtain $\sup_{f\in \mH_R}\sqrt{P_nf^2} =R$. 
Lemma \ref{lem:covering} yields
\begin{align*}
    &\int_{\alpha}^R \sqrt{\frac{\log \mN(\varepsilon,\mH_R,\|\cdot\|_n) }{n}} \,d\varepsilon \\ &\leq\frac{1}{\sqrt{n}}\int_{\alpha}^R \sqrt{ \,\frac{6R^2}{\varepsilon}\,+4R \left[\exp \left\{V \left(\frac{3R}{\varepsilon}\right)^{\gamma}\right\}-1 \right] }d\varepsilon\\
    &\leq\frac{1}{\sqrt{n}} \int_{1}^{\frac{1}{\varepsilon_0}}  \sqrt{ 6R\tau\,+4R \,\{\exp{(V3^{\gamma} \tau^{\gamma})}-1 \,\} }\,\frac{R}{\tau^2}d\tau \\
    &\leq\frac{R}{\sqrt{n}}\int_{1}^{\frac{1}{\varepsilon_0}} \sqrt{6R}\tau^{-{3} / {2}}\,+2\sqrt{R}\tau^{-2} \{\exp{(V3^{\gamma}\tau^{\gamma})}-1\} d\tau\\
    &\leq\frac{R}{\sqrt{n}}\int_{1}^{\frac{1}{\varepsilon_0}} \sqrt{6R}\tau^{-{3} / {2}}\,+2\sqrt{R}\tau^{\gamma-1} \exp{(V3^{\gamma}\tau^{\gamma})}d\tau\\
    &\leq{R}{n^{-1/2}} \{2\sqrt{6R}+{2\sqrt{R}}\exp(V3^{\gamma}\varepsilon_0^{-\gamma}) / ({V3^{\gamma}})\}\\
    &\leq{R}{n^{-1/2}} \{ 2\sqrt{R}(\sqrt{6}+\frac{1}{V3^{\gamma}})\exp(V3^{\gamma})\}  \exp(\varepsilon_0^{-\gamma}).
\end{align*}
Here, we substitute $\tau =R/{\varepsilon}$ and define $\varepsilon_0=\alpha/{R}$. Then, we have
\begin{align*}
    \mR_n(\mH_R) &\leq \,R\inf_{0\leq \varepsilon_0 \leq 1}[4\varepsilon_0 +n^{-1/2} \{ 2\sqrt{R}(\sqrt{6}+\frac{1}{V3^{\gamma}})\exp(V3^{\gamma})\}  \exp(\varepsilon_0^{-\gamma})] \\
    &\leq R[4 {(\log n^{1/4})^{-1 /{\gamma}}} +{n^{-1/4}\{ 2\sqrt{R}(\sqrt{6}+ \frac{1}{V3^{\gamma}})\exp(V3^{\gamma})\}  } ]\\
    &\leq R\{ 2\sqrt{R}(\sqrt{6}+\frac{1}{V3^{\gamma}})\exp(V3^{\gamma})\} {(\log n)^{-1/{\gamma}}}.
\end{align*}
In the second inequality, we substitute $\varepsilon_0$ to $(\log n^{1/{4}})^{-1/{\gamma}}$.
Then, we obtain the statement.
\end{proof}

\section{Technical Lemma} \label{app:tech}
We provide several technical results for the proof of the main theorem.
\begin{lemma} \label{lem:func_derive}
    We obtain the following equality:
    \begin{align*}
        \nabla L_n(\hat{f}_n) = \frac{1}{n}\sum_{j=1}^n\ell'(Y_j\hat{f}_n(X_j))Y_jh(X_j)+2\lambda\langle\hat{f}_n,h\rangle_\mH.
    \end{align*}
\end{lemma}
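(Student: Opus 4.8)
The plan is to compute the Gâteaux (directional) derivative directly as $\nabla L_n(\hat{f}_n) = \frac{d}{dt}\,L_n(\hat{f}_n + th)\big|_{t=0}$, exploiting the reproducing property of $\mH$ so that the pointwise evaluations $f \mapsto f(X_j)$ appearing in the empirical loss are handled as bounded linear functionals of the argument. Since the claim only concerns a first-order expansion along the fixed admissible direction $h \in \mH$, no uniformity over $h$ is needed.

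First I would expand each of the two pieces of $L_n$ along the ray $t \mapsto \hat{f}_n + th$. For the empirical loss term, linearity of evaluation gives $(\hat{f}_n + th)(X_j) = \hat{f}_n(X_j) + t h(X_j)$ for each $j$, and the differentiability of $\ell$ imposed in Section~\ref{sec:pre} together with the chain rule yields
\begin{align*}
\frac{d}{dt}\,\ell\!\left(Y_j\big(\hat{f}_n(X_j) + t\,h(X_j)\big)\right) = \ell'\!\left(Y_j\big(\hat{f}_n(X_j) + t\,h(X_j)\big)\right) Y_j\, h(X_j),
\end{align*}
which at $t = 0$ equals $\ell'(Y_j \hat{f}_n(X_j))\, Y_j\, h(X_j)$; because the average over $j$ is a finite sum, its derivative is the average of these derivatives. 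For the penalty term, bilinearity and symmetry of $\langle \cdot,\cdot\rangle_\mH$ give $\|\hat{f}_n + th\|_\mH^2 = \|\hat{f}_n\|_\mH^2 + 2t\langle \hat{f}_n, h\rangle_\mH + t^2 \|h\|_\mH^2$, whose derivative at $t=0$ is $2\langle \hat{f}_n, h\rangle_\mH$; multiplying by $\lambda$ and adding the two contributions produces exactly the asserted formula.

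It then remains only to confirm that this formal manipulation is rigorous, and here there is essentially no obstacle: the map $t \mapsto L_n(\hat{f}_n + th)$ is a finite sum of compositions of the twice-differentiable function $\ell$ with affine maps in $t$, plus a quadratic polynomial in $t$, hence differentiable on all of $\R$, so no dominated-convergence or interchange-of-limits argument is required; and $h(X_j)$ is well defined since $h \in \mH$ and $K(\cdot, X_j) \in \mH$. The only point deserving a word of care is that ``derivative'' is meant in the Gâteaux sense along directions in $\mH$, and that the stationarity identity $\nabla L_n(\hat{f}_n) = 0$ used in Step~(ii-1) of the proof of Theorem~\ref{thm:convergence} is precisely the first-order optimality condition for the convex minimization problem \eqref{def:erm}.
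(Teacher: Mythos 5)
Your proposal is correct and follows essentially the same route as the paper: both compute the Gâteaux derivative along the fixed direction $h$ by expanding $L_n(\hat{f}_n+\alpha h)$, applying the chain rule to the loss term and bilinearity of $\langle\cdot,\cdot\rangle_\mH$ to the penalty, and evaluating at $\alpha=0$. Your additional remarks on rigor (finite sum, affine dependence on $t$) are fine but not needed beyond what the paper already does.
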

\begin{proof}[Proof of Lemma \ref{lem:func_derive}]
We study the optimization problem in \eqref{def:erm} by considering its functional derivative in the Fr\'echet sense.
For a coefficient $\alpha > 0$, we rewrite the target function in \eqref{def:erm} as
\begin{align*}
    L_n(\alpha)&=P_n\{\ell\circ (\hat{f}_n+\alpha h)\}+\lambda\|\hat{f}_n+\alpha h\|_\mH^2=\frac{1}{n}\sum_{j=1}^n\ell\{Y_j(\hat{f}_n+\alpha h)(X_j)\}+\lambda\|\hat{f}_n+\alpha h\|_\mH^2.
\end{align*} 
Since $\hat{f}_n$ is the minimizer of the problem in \eqref{def:erm}, a derivative of $L_n(\alpha)$ is $0$ with $f = \hat{f}_n$ and $\alpha = 0$.
By the differentiability of $\ell$, we obtain the following derivative
\begin{eqnarray*}
    \frac{dL_n}{d\alpha}(0)=\frac{1}{n}\sum_{j=1}^n\ell'\{Y_j\hat{f}_n(X_j)\}Y_jh(X_j)+2\lambda\langle\hat{f}_n,h\rangle_\mH,
\end{eqnarray*}
then we obtain the statement.
\end{proof}

\begin{lemma} \label{lem:bound_xi}
Suppose $\g(x)\leq 0 $ and $ \|\g\|_\mH<U$ hold.
Then, for $x \in \mX$ such that $f_0(x) = \delta > 0$ holds, we set $S=B(x;\delta_0)$ obtain
\begin{align*}\label{bound_xi}
    \frac{1}{n}\sum_{j=1}^n\ell'\{Y_j\g(X_j)\}Y_jh(X_j) \leq \frac{1}{n}\sum_{j=1}^n\xi_j,
\end{align*}
 where $\xi_j:=2U\delta_0 h(X)I_{S}(X_j)+\ell'(0)Yh(X)I_{S}(X_j)+|\ell'(-U)| h(X)I_{S^c}(X_j)$.
\end{lemma}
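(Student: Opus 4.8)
The plan is to reduce the claimed average bound to a pointwise one: it suffices to show, for each fixed $j$, that $\ell'(Y_j\g(X_j))Y_jh(X_j)\le\xi_j$ (reading the evident typographical slips, so that $\xi_j=2U\delta_0 h(X_j)\mone\{X_j\in S\}+\ell'(0)Y_jh(X_j)\mone\{X_j\in S\}+|\ell'(-U)|h(X_j)\mone\{X_j\in S^c\}$), and then average over $j\in\{1,\dots,n\}$. Since $h\ge 0$ on $\mX$ by condition (i) defining $\mH(x,\delta_0)$ in \eqref{cond:rkhs}, multiplying a scalar inequality by $h(X_j)\ge 0$ preserves its direction, so I only need to control the scalar $\ell'(Y_j\g(X_j))Y_j$, and I would do this by splitting on whether $X_j\in S$ or $X_j\in S^c$ and on the sign of $Y_j$. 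The tools are the two RKHS estimates in \eqref{ineq:prop_rkhs} — the pointwise bound $|\g(x')|\le\|\g\|_\mH<U$ for all $x'$, and the Lipschitz bound $|\g(x')-\g(x'')|\le\|\g\|_\mH\|x'-x''\|$ — together with the structural properties of $\ell$: $\ell'<0$, $\ell'$ increasing, and $\ell'$ is $1$-Lipschitz (because $0\le\ell''\le 1$).

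On the event $X_j\in S^c$ the two $\mone\{X_j\in S\}$ terms of $\xi_j$ drop, so I would only argue $\ell'(Y_j\g(X_j))Y_j\le|\ell'(-U)|$: for $Y_j=1$ this is immediate from $\ell'<0$, and for $Y_j=-1$ it reads $|\ell'(-\g(X_j))|\le|\ell'(-U)|$, which follows since $\g(X_j)<U$ forces $-\g(X_j)>-U$ and $t\mapsto-\ell'(t)$ is nonincreasing. On the event $X_j\in S=B(x;\delta_0)$ the decisive extra ingredient is that the Lipschitz bound and the hypothesis $\g(x)\le 0$ give $\g(X_j)\le\g(x)+\|\g\|_\mH\|X_j-x\|<U\delta_0$; I would then use a first-order estimate of $\ell'$ around $0$. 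For $Y_j=1$: monotonicity and $1$-Lipschitzness of $\ell'$ give $\ell'(\g(X_j))\le\ell'(0)+U\delta_0\le\ell'(0)+2U\delta_0$ (using $\ell'(\g(X_j))\le\ell'(0)$ directly when $\g(X_j)<0$), hence $\ell'(Y_j\g(X_j))Y_jh(X_j)\le(2U\delta_0+\ell'(0)Y_j)h(X_j)=\xi_j$. For $Y_j=-1$ the quantity equals $-\ell'(-\g(X_j))$, and it suffices to check $\ell'(0)-\ell'(-\g(X_j))\le 2U\delta_0$: this is $\le 0$ by monotonicity when $\g(X_j)\le 0$, and it is $\le\g(X_j)<U\delta_0$ by $1$-Lipschitzness when $0<\g(X_j)<U\delta_0$; again $\ell'(Y_j\g(X_j))Y_jh(X_j)\le(2U\delta_0+\ell'(0)Y_j)h(X_j)=\xi_j$. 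Combining the two cases and averaging over $j$ then finishes the proof.

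The main obstacle is precisely the subcase $Y_j=-1$, $X_j\in S$: one cannot lower-bound $\ell'(-\g(X_j))$ by $\ell'(0)-|\g(X_j)|$ in a useful way, because $|\g(X_j)|$ is only controlled by $U$, not by $U\delta_0$. The fix is the further split on the sign of $\g(X_j)$ — on $S\cap\{\g(X_j)>0\}$ one genuinely has $0<\g(X_j)<U\delta_0$, so the $1$-Lipschitz bound with increment $U\delta_0$ does apply, whereas on $S\cap\{\g(X_j)\le 0\}$ monotonicity of $\ell'$ alone suffices — and this is exactly the point at which the hypothesis $\g(x)\le 0$ (rather than merely $\|\g\|_\mH<U$) enters. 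The hypothesis $f_0(x)>0$ plays no role in this inequality itself; it merely fixes the point $x$ at which the bound is later invoked, where the $S$-term $\ell'(0)Y_jh(X_j)\mone\{X_j\in S\}$ has negative expectation.
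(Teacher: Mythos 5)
Your proof is correct and follows essentially the same route as the paper: the same split into $X_j\in S$ versus $S^c$ and $Y_j=\pm1$, the same use of the RKHS Lipschitz bound together with $\g(x)\le 0$ to get $\g(X_j)<U\delta_0$ on $S$, and the same appeal to monotonicity of $\ell'$ and $\ell''\le 1$ to compare $\ell'$ near $0$. The only cosmetic difference is that you argue pointwise in $j$, whereas the paper first bounds the sums by $\ell'(\pm U\delta_0)$ and then symmetrizes via the $(1\pm Y_j)/2$ identity before comparing to $\ell'(0)$.
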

\begin{proof}[Proof of Lemma \ref{lem:bound_xi}]
We prepare some inequalities.
It should be noted that
\begin{align*}
    &\frac{1}{n}\sum_{j=1}^n\ell'\{Y_j\g(X_j)\}Y_jh(X_j)\\
    &=\frac{1}{n}\sum_{j:Y_j=+1}^n\ell'\{\g (X_j)\}h(X_j)-\frac{1}{n}\sum_{j:Y_j=-1}^n\ell'\{-\g(X_j)\}h(X_j).
\end{align*}
Also, we recall that $\ell'$ is negative and increasing, $h$ is nonnegative. 
For any $x'\in S$, the RKHS property \eqref{ineq:prop_rkhs} provides
    $\g(x')\leq \|\g\|_\mH \delta_0\leq U\delta_0$,
then we have
    $\ell'\{\g(x')\} \leq \ell'(U\delta_0), \mbox{~and~} \ell'\{-\g(x')\} \geq \ell'(-U\delta_0)$.
Also, $|\g(x')|\leq \|\g\|_\mH\leq U$ holds for all $x'\in S$ suggests that
     $|\ell'\{\g(y)\}| \leq |\ell'(-U)|, \mbox{~and~} |\ell'\{-\g(y)\}| \leq |\ell'(-U)|$.

Now, we are ready to bound the target value.
For $j = 1,...,n$, we define $Z_j := h(X_j) I_S(X_j)$ and $Z_j^c := h(X_j) I_{S^c}(X_j)$ for brevity.
We bound the value as
\begin{align*}
  &\frac{1}{n}\sum_{j=1}^n\ell'\{Y_j\g(X_j)\}Y_jh(X_j)\\
  &\leq \frac{\ell'(U\delta_0)}{n}\sum_{j:X_j\in S,Y_j=+1} h(X_j)  -\frac{\ell'(-U\delta_0)}{n}\sum_{j:X_j\in S,Y_j=-1} h(X_j) +\frac{|\ell'(-U)|}{n}\sum_{j:X_j\in S^c} h(X_j)\\
  &= \frac{\ell'(U\delta_0)}{n}\sum_{j:X_j\in S} \frac{1+Y_j}{2}h(X_j)  -\frac{\ell'(-U\delta_0)}{n}\sum_{j:X_j\in S} \frac{1-Y_j}{2}h(X_j) +\frac{|\ell'(-U)|}{n}\sum_{j:X_j\in S^c} h(X_j)\\
  &= \frac{\ell'(U\delta_0)-\ell'(-U\delta_0)}{2n}\sum_{j=1}^nZ_j+ \frac{\ell'(U\delta_0)+\ell'(-U\delta_0)}{2n}\sum_{j=1}^nY_j Z_j  +\frac{|\ell'(-U)|}{n}\sum_{j=1}^n Z_j^c.
\end{align*}
About the coefficient terms, we obtain
\begin{align*}
    \left|\frac{\ell'(U\delta_0)+\ell'(-U\delta_0)}{2}-\ell'(0)\right|&\leq  \frac{|\ell'(U\delta_0)-\ell'(0)|}{2}+\frac{|\ell'(-U\delta_0)-\ell'(0)|}{2} \\
    &\leq \ell''(0)U \delta_0 \leq U\delta_0,
\end{align*}
and similarly
\begin{eqnarray*}
    \frac{|\ell'(U\delta_0)-\ell'(-U\delta_0)|}{2}\leq  \frac{|\ell'(U\delta_0)-\ell'(0)|}{2}+\frac{|\ell'(-U\delta_0)-\ell'(0)|}{2} \leq U\delta_0.
\end{eqnarray*}
Using the inequalities, we further bound the target value as
\begin{align*}
    &\frac{1}{n}\sum_{j=1}^n\ell'\{Y_j\g(X_j)\}Y_jh(X_j) \\
    &\leq \frac{U\delta_0}{n}  \sum_{j=1}^n Z_j+\frac{U\delta_0+\ell'(0)}{n}\sum_{j=1}^nY_j Z_j+\frac{|\ell'(-U)|}{n}\sum_{j=1}^n Z_j^c  \\
    &\leq 2\frac{U\delta_0}{n}  \sum_{j=1}^n Z_j+\frac{\ell'(0)}{n}\sum_{j=1}^nY_j Z_j+\frac{|\ell'(-U)|}{n}\sum_{j=1}^n Z_j^c.
\end{align*}
Then, we obtain the statement by the definition of $\xi_j$.
\end{proof}

\begin{lemma}\label{lem:bern_bound} Consider the same setting with Lemma \ref{lem:bound_xi}.
Then, we get the following inequality:
\begin{eqnarray*}
    \Pr\left(\frac{1}{n}\sum_{j=1}^n\xi_j\geq -\frac{1}{2}\delta_0\Ep[h(X)]\right)
    \leq2\exp \left\{-\frac{n\delta_0\Ep[h(X)]}{C_{U,L}}\right\}.
\end{eqnarray*}
\end{lemma}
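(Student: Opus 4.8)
The plan is to regard $\xi_1,\dots,\xi_n$ as i.i.d.\ copies of a single bounded random variable $\xi$ --- they are measurable functions of the i.i.d.\ pairs $(X_j,Y_j)$ together with the fixed objects $U$, $\delta_0$, $h$, $S=B(x;\delta_0)$ --- to show that $\Ep[\xi]$ is at most a strictly negative multiple of $\delta_0\Ep[h(X)]$, and then to apply a Bernstein inequality that exploits the fact that $\Var(\xi)$ is of order $\delta_0\Ep[h(X)]$ rather than merely $\delta_0^2$. First I would record the deterministic bounds coming from $h\in\mH(x,\delta_0)$: property (i) of \eqref{cond:rkhs} gives $0\le h\le 2\delta_0$, and since $\ell'(0),\ell'(-U)\in[-1,0)$ and $Y\in\{-1,1\}$, splitting $\xi$ over $S$ and $S^c$ yields $|\xi|\le 2h(X)\le 4\delta_0$ once $\delta_0$ is small enough that $2U\delta_0\le 1$; in particular $\Ep[\xi^2]\le 4\Ep[h(X)^2]\le 8\delta_0\Ep[h(X)]$ using $h\le 2\delta_0$.

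The key step is the mean estimate. Writing $\eta(x)=\Ep[Y\mid X=x]$, which equals $f_0(x)$ by the Radon--Nikodym computation in the proof of Lemma \ref{lem:classifier}, we have
\[
\Ep[\xi]=2U\delta_0\,\Ep[h(X)I_S(X)]+\ell'(0)\,\Ep[\eta(X)h(X)I_S(X)]+|\ell'(-U)|\,\Ep[h(X)I_{S^c}(X)].
\]
Property (iii) of \eqref{cond:rkhs} bounds the last term by $\delta_0\Ep[h(X)]$ and simultaneously gives $\Ep[h(X)I_S(X)]\ge(1-\delta_0)\Ep[h(X)]$. For the middle term I would invoke perfect classifiability (Proposition \ref{prop:DH} and its non-Gaussian extension), which forces $R(\tilde f^*)=0$ and hence $\eta=\sgn\tilde f^*$ holds $\Pi$-a.s.; since the standing hypothesis $f_0(x)=\delta>0$ gives $\eta(x)>0$, this yields $\eta\equiv 1$ $\Pi$-a.e.\ on $S=B(x;\delta_0)$ for $\delta_0$ small enough, so that $\Ep[\eta h I_S]=\Ep[h I_S]\ge(1-\delta_0)\Ep[h(X)]$. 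Because $\ell'(0)$ is a fixed negative constant, collecting these facts gives, for $\delta_0$ below an absolute threshold,
\[
\Ep[\xi]\le(2U\delta_0+\delta_0)\Ep[h(X)]+\ell'(0)(1-\delta_0)\Ep[h(X)]\le\tfrac12\ell'(0)\Ep[h(X)]\le-\tfrac34\delta_0\Ep[h(X)].
\]

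Finally, the event in the statement is contained in $\bigl\{n^{-1}\sum_{j=1}^n(\xi_j-\Ep[\xi])\ge\tfrac14\delta_0\Ep[h(X)]\bigr\}$, so Bernstein's inequality with second-moment proxy $8\delta_0\Ep[h(X)]$ and uniform bound of order $\delta_0$ gives $\Pr(\cdot)\le\exp\bigl(-c\,n\,\delta_0\Ep[h(X)]\bigr)$ for an absolute constant $c$; enlarging the constant to absorb the dependence on $U$ and on the loss, and inserting the harmless factor $2$, delivers $2\exp\bigl(-n\delta_0\Ep[h(X)]/C_{U,L}\bigr)$. The concentration step is routine. The step I expect to be the real obstacle is the mean estimate, specifically the assertion that $\eta\equiv 1$ $\Pi$-a.e.\ on the ball $B(x;\delta_0)$ for all small $\delta_0$, uniformly over the admissible $x$: this is where perfect classifiability must be combined with the local behaviour of $\tilde f^*$ near $x$ (or a Lebesgue-density property of $\Pi$), and where the implicit requirement that $\delta_0$ be chosen small relative to $U$ and $|\ell'(0)|$ enters.
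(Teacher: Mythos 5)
Your overall strategy coincides with the paper's: show that $\Ep[\xi]$ is at most a strictly negative multiple of $\delta_0\Ep[h(X)]$ using properties (i) and (iii) of $\mH(x,\delta_0)$, then apply Bernstein's inequality with a second-moment proxy of order $\delta_0\Ep[h(X)]$ (via $h\leq 2\delta_0$) and a uniform bound of order $\delta_0$; this is exactly how the paper arrives at the exponent $n\delta_0\Ep[h(X)]/C_{U,L}$. The one step you flag as the real obstacle --- justifying that the label is $+1$ on all of $S=B(x;\delta_0)$ --- is closed in the paper not by a Lebesgue-density argument but by the standing Lipschitz hypothesis announced in the proof overview: $L$ is fixed there as a Lipschitz constant of $f^*$ (used for $\tilde f^*$), and since the hard-margin condition gives $|\tilde f^*(x)|\geq \delta=1$ at the center, one gets $\inf_{x'\in S}\tilde f^*(x')\geq 1-L\delta_0>0$ for small $\delta_0$, whence $\Ep[Yh(X)I_{S}(X)]\geq (1-L\delta_0)\Ep[h(X)I_{S}(X)]\geq (1-L\delta_0)(1-\delta_0)\Ep[h(X)]$ after invoking property (iii). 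This is precisely why the constant in the lemma carries the subscript $L$ and why the proof imposes $\delta_0\leq 1/(L+4U+12)$. With that substitution in place of your ``$\eta\equiv 1$ $\Pi$-a.e.\ on $S$'' assertion, your mean estimate and the concentration step go through as written; so the proposal is essentially the paper's proof, with one correctly identified gap that the Lipschitz assumption on $f^*$ is there to fill.
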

\begin{proof}[Proof of Lemma \ref{lem:bern_bound}]
We firstly bound an expectation of $\xi_j$, 
\begin{align*}
    \Ep[\xi_j]&=2U\delta_0\Ep[h(X)I_{S}(X)]+\ell'(0)\Ep[Yh(X)I_{S}(X)]+|\ell'(-U)|\Ep[h(X)I_{S^c}(X)] \\
    & \leq 2U\delta_0\Ep[h(X)]+\ell'(0)\Ep[Yh(X)I_{S}(X)]+|\ell'(-U)| \delta \Ep[h(X)],
\end{align*}
by the conditions of $\mH(x,\delta)$ presented in \eqref{cond:rkhs}.
We define $\bar{f}(x) =\tilde{f}^*(x) \vee 1$.
For the term $\Ep[Yh(X)I_{S}(X)]$, we approximate it as
\begin{align*}
    \Ep[Yh(X)I_{S}(X)] &\geq \Ep[\bar{f}(X)h(X)I_{S}(X)] \\
    &\geq (1 - L \delta_0) \Ep[h(X)I_{S}(X)]\\
    &\geq (1 - L \delta_0)(1-\delta_0)\Ep[h(X)].
\end{align*}
The first equality holds since $\tilde{f}^*$ is a perfect classifier.
The second equality follows the hard-margin condition on $\tilde{f}^*$ and  $\inf_{x' \in S} \tilde{f}^*(x') \geq \delta - L \delta_0$ by the Lipschitz constant $L$ of $\tilde{f}^*$.
For the last inequality, we apply the condition \textit{(iii)} for $\mH(x,\delta_0)$ in \eqref{cond:rkhs} and obtain
\begin{align*}
    \delta_0\int_\mX hd\Pi\geq \int_{S^c}hd\Pi =\Ep[h]-\int_{S}hd\Pi, 
\end{align*}
then we have $\Ep[h(X)I_{S}(X)]\geq (1-\delta_0)\Ep[h(X)]$.
Since  $\ell'(0)<0$ holds, we substitute the bound for $\Ep[Yh(X)I_{S}(X)]$ and obtain
\begin{align*}
    \Ep[\xi_j]\leq \{ 2U\delta_0+  \ell'(0) (1-L\delta_0)(1-\delta_0)+|\ell'(-U)|\delta_0 \}\Ep[h(X)] \leq -\delta_0 \Ep[h(X)].
\end{align*}
The last inequality follows by selecting a sufficiently small $\delta_0 > 0$ as $\delta_0 \leq 1/({L+4U+12})$.

We finally bound a tail probability of $n^{-1}\sum_{j=1}^n\xi_j$ by the Bernstein inequality (Theorem 3.1.7 in \cite{gine2016mathematical}).
Using an elementary inequality $(a+b+c)^2\leq 3a^2+3b^2+3c^2$, we have
$\Ep(\xi^2) \leq C\delta_0 \Ep(h(X))$.
In addition, it is clear that $|\xi|\leq C_{U,L}\delta_0$.
Then, by the Bernstein's inequality, we obtain
\begin{align*}
    \Pr \left(\frac{1}{n}\sum_{j=1}^n\xi_j\geq -\frac{1}{2}\delta_0\Ep[h(X)]\right)&\leq 2 \exp\left\{-\frac{n^2\delta_0^2\Ep[h(X)]^2/8}{\sum_{i=1}^n\Ep[\xi_i^2]-\,\,nC_{U,L}\delta_0^2\Ep[h(X)]/6}\right\} \\
    &\leq 2\exp \left\{-\frac{n\delta_0\Ep[h(X)]}{C_{U,L}}\right\}.
\end{align*}
Then, we obtain the statement.
\end{proof}

\section{Additional Experiment: Different Eigenfunctions}
We implement an additional experiment to validate the main result when the covariance functions between different labels do not have the same eigenfunctions.
The setting in this section does not strictly satisfy our assumptions, therefore it is outside the scope of our theory. 
However, to investigate the potential applicability of our theory, we perform this experiment with different hyper-parameter choices.

\subsection{Experimental Setting}

As in Section 4, we generate functional data from two groups with labels $\{-1,1\}$.
For each group, we generate $n$ functions on $\mT = [0,1]$ with two orthogonal bases:\begin{align*}
  \left\{
  \begin{array}{ll} 
  \phi_0(t) = 1, \ \ \ \ \phi_j(t) = \sqrt{2} \sin (\pi jt),\ \ \  \forall j \geq 1,  \\
  \psi_0(t) = 1 , \ \ \ \ \psi_j(t) = \sqrt{2} \cos (\pi jt),\ \ \  \forall j \geq 1.
  \end{array}
  \right.
\end{align*}
$n$ is set from $1$ to $3000$.
For a label $+1$, we generate functional data $X_{i+}(t) = \sum_{j=0}^{50} (\theta_j^{1/2} Z_{j+} + \mu_{j+}) \phi_j(t)$ with random variables $Z_{j+}$ and coefficients $ \theta_j, \mu_{j+}$ for $j=0,1,...,50$ and $i=1,...,n$.
For a label $-1$, we generate $X_{i-}(t) = \sum_{j=0}^{50} (\theta_j^{1/2} Z_{j-} + \mu_{j-}) \psi_j(t)$ with random variables $Z_{j-}$ and coefficients $ \mu_{j-}$.

That is, in \textit{Scenario 1}, we set $\theta_j = j^{-2}$, $\mu_{j-}=0$, and change $\mu_{j+} = j^{-\gamma}$ and draw $Z_{j+},Z_{j-}$ from standard normal Gaussian. Then we determined the DH condition based on whether the gamma was greater or less than $3/2$.
In \textit{Scenario 2}, we set $\theta_j = j^{-2},$ $\mu_{j-}= 0$ and adjust $\mu_{j+} = \mone\{j = 0\} \mu$, and let $Z_{j+},Z_{j-}$ be sample from uniform distribution on $[-1/2,1/2]$. Although it is analytically challenging to specify when the HM condition is violated because of the different basis functions, we present the results of our experiments for various $\mu$.
In Scenario 2, because of the difficulty of rigorously checking the DH condition in the setting, we examined a broader range of $\mu \in \{1.5, 1.7, 1.9, 2.1\}$.

Other settings are the same as Section 4.
For each $n$, we newly generate $1000$ test data and calculate misclassification rates with the test data. 
Each simulation experiment is repeated $200$ times, and the average value is reported.
We investigate the classification error of the RKHS classifier with the Gaussian kernel and the logit loss. The tuning parameters are chosen by cross-validation.

\subsection{Result}

\begin{figure}[tbp]
\centering
  \hspace*{-1cm}
  \includegraphics[width=\hsize]{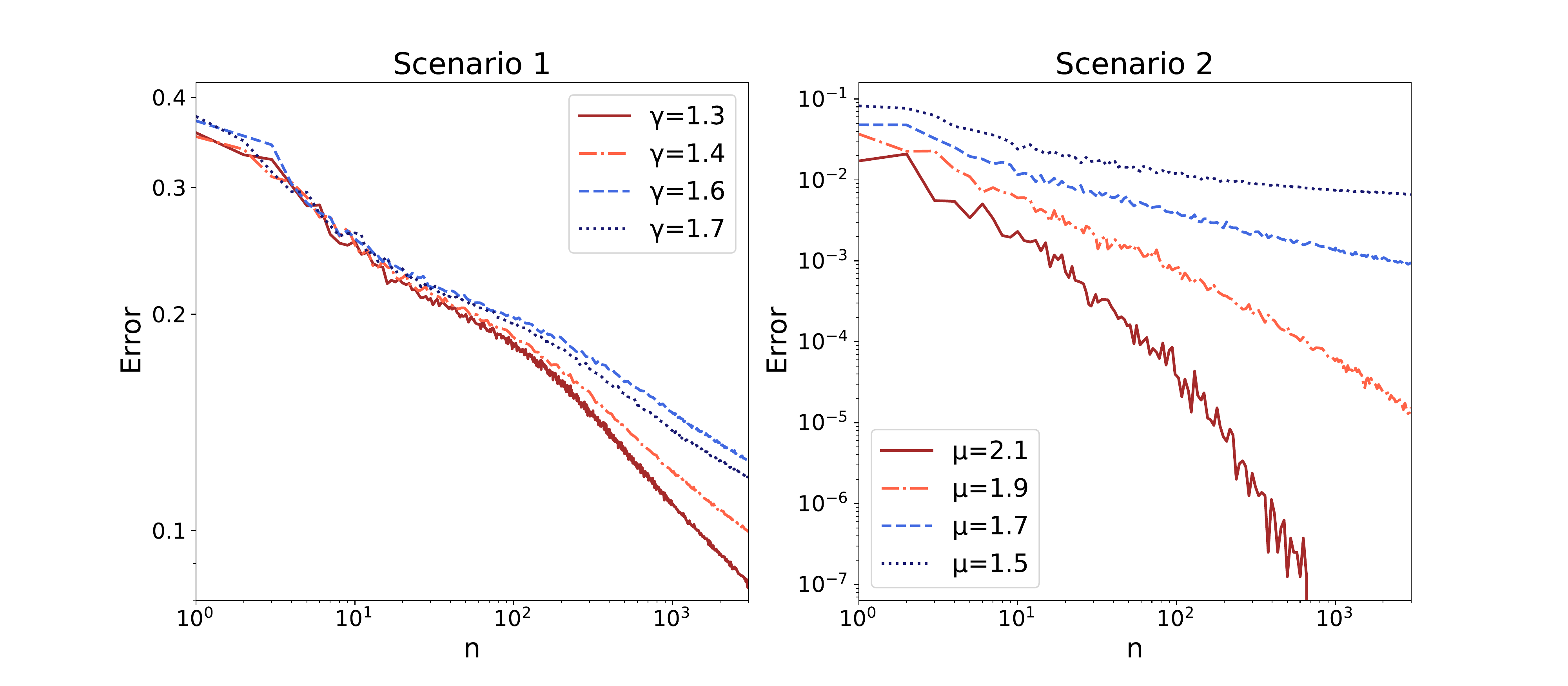}
  \caption{Error (logarithm of misclassification error rate) by the RKHS against $\log n$.
  Left: Scenario 1 for the Delaigle--Hall condition with $\gamma \in \{1.3\,\mathrm{(solid)},1.4\,\mathrm{(dashes)},1.6\,\mathrm{(dots)},1.7\,\mathrm{(dotdash)}\}$. Right: Scenario 2 for the hard-margin condition with $\mu \in \{2.1\,\mathrm{(solid)},1.9\,\mathrm{(dashes)},1.7\,\mathrm{(dot)},1.5\,\mathrm{(dotdash)}\}$.
\label{fig:add}}
\end{figure}

The results are shown in Figure \ref{fig:add}. 
In Scenario 1, we see a difference in convergence speed for each value of $\gamma$, although the difference is not so clear as in Figure 1 in the main text.
The reason is that the conditions we are actually checking are different from those we should impose, so there must be a difference in scale. 
However, as $n$ increases to some extent, e.g. $n\ge 100$, we can observe an exponential-like fast convergence.
In Scenario 2, 
the results confirm exponential convergence for large $\mu$, and as $\mu$ gets smaller, the rate falls off as in polynomial convergence.

\end{document}